\definecolor{myblue}{RGB}{0, 50, 150}
\DeclareMathOperator*{\argmax}{arg\,max}
\DeclareMathOperator*{\argmin}{arg\,min}
\newcommand{\inprod}[2]{{\left \langle #1,#2 \right \rangle}} 
\newcommand{\st}{\textrm{s.t.}}
\newcommand{\tr}{\textrm{tr}}
\newcommand{\Diag}{\textrm{Diag}}
\newcommand{\rank}{\textrm{rank}}
\theoremstyle{thmstyleone}%
\newtheorem{proposition}{Proposition}%
\newtheorem{lemma}{Lemma}%
\theoremstyle{thmstyletwo}%
\theoremstyle{thmstylethree}%
\newtheorem{definition}{Definition}%
\begin{document}

\SetBgContents{Published at \url{https://doi.org/10.1007/s10589-026-00764-6}}      
\SetBgPosition{current page.center}
\SetBgAngle{0}                                    
\SetBgColor{gray}                                 
\SetBgScale{1.5}                                  
\SetBgHshift{0}                                   
\SetBgVshift{9cm} 

\title[Exact and Heuristic Algorithms for Constrained Biclustering]{Exact and Heuristic Algorithms for Constrained Biclustering}


\author{\fnm{Antonio M.} \sur{Sudoso}}\email{antoniomaria.sudoso@uniroma1.it}

\affil{\orgdiv{Department of Computer, Control, and Management Engineering ``Antonio Ruberti"}, \orgname{Sapienza University of Rome}, \orgaddress{\street{Via Ariosto 25}, \city{Rome}, \postcode{00185}, \country{Italy}}}



\abstract{Biclustering, also known as co-clustering or two-way clustering, simultaneously partitions the rows and columns of a data matrix to reveal submatrices with coherent patterns. Incorporating background knowledge into clustering to enhance solution quality and interpretability has attracted growing interest in mathematical optimization and machine learning research. Extending this paradigm to biclustering enables prior information to guide the joint grouping of rows and columns. We study constrained biclustering with pairwise constraints, namely must-link and cannot-link constraints, which specify whether objects should belong to the same or different biclusters. As a model problem, we address the constrained version of the k-densest disjoint biclique problem, which aims to identify k disjoint complete bipartite subgraphs (called bicliques) in a weighted complete bipartite graph, maximizing the total density while satisfying pairwise constraints. We propose both exact and heuristic algorithms. The exact approach is a tailored branch-and-cut algorithm based on a low-dimensional semidefinite programming (SDP) relaxation, strengthened with valid inequalities and solved in a cutting-plane fashion. Exploiting integer programming tools, a rounding scheme converts SDP solutions into feasible biclusterings at each node. For large-scale instances, we introduce an efficient heuristic based on the low-rank factorization of the SDP. The resulting nonlinear optimization problem is tackled with an augmented Lagrangian method, where the subproblem is solved by decomposition through a block-coordinate projected gradient algorithm. Extensive experiments on synthetic and real-world datasets show that the exact method significantly outperforms general-purpose solvers, while the heuristic achieves high-quality solutions efficiently on large instances.}

\keywords{Machine Learning and Optimization, Semisupervised Biclustering, Branch-and-Cut, Semidefinite Programming, Low-rank Factorization}

\maketitle

\section{Introduction}\label{sec1}
Biclustering is a fundamental problem in data mining and machine learning, aiming to simultaneously group rows (samples) and columns (features) of a data matrix to identify submatrices, known as biclusters, that exhibit coherent patterns \cite{mirkin1996mathematical, cheng2000biclustering}. Unlike traditional clustering, which partitions either the rows or the columns, biclustering reveals localized, context-dependent relationships that may be invisible in a one-way analysis. This capability is particularly valuable in domains where relationships between samples depend on specific subsets of features. For example, in gene expression analysis, two genes may exhibit co-expression patterns only under a subset of experimental conditions \cite{cheng2000biclustering}; similarly, in document clustering, two documents may be closely related only with respect to a specific subset of terms or topics \cite{dhillon2001co}. For an extensive overview of biclustering applications, see the survey in \cite{busygin2008biclustering} and references therein. While this flexibility has demonstrated practical utility in many applications, it also introduces significant computational challenges. Therefore, striking the right balance between solution quality, interpretability, and efficiency remains a central focus of current research.

Clustering is an unsupervised learning task, as it operates exclusively on unlabeled data to discover hidden patterns. However, when performed without any form of guidance, clustering algorithms may produce groupings that fail to align with domain expertise or the intended analytical goals. To address this limitation and improve both the quality and interpretability of the results, researchers have introduced mechanisms to incorporate prior knowledge into the clustering process \cite{basu2008constrained, davidson2007survey}. A common approach involves embedding user-defined constraints into the algorithm. The most widely used are must-link constraints, which specify that certain pairs of entities must be placed in the same cluster, and cannot-link constraints, which require them to be placed in different clusters \cite{wagstaff2000clustering}. By integrating such pairwise constraints, the clustering task shifts from a purely unsupervised problem to a semisupervised (or constrained) clustering setting, where background information actively guides the cluster formation \cite{piccialli2022exact, piccialli2023global}. Extending this paradigm to biclustering allows prior knowledge to influence the simultaneous grouping of rows and columns, leading to constrained or semisupervised biclustering \cite{pensa2008constrained}.

Biclustering algorithms can be broadly categorized according to the bicluster structures they aim to detect \cite{busygin2008biclustering}. When multiple blocks are assumed to be present in the data matrix, common structures include: (i) overlapping biclusters, (ii) non-overlapping biclusters arranged in a checkerboard pattern, and (iii) biclusters with exclusive rows and columns forming a block-diagonal structure.  In this work, we focus on the block-diagonal case, where each row and column belongs to exactly one bicluster. This arrangement not only facilitates interpretation—since reordering rows and columns reveals the biclusters along the main diagonal—but also aligns naturally with the incorporation of background knowledge. For example, in gene expression analysis, block-diagonal biclusters allow for the clear categorization of genes into groups associated with specific cancer types, while in text mining they support the separation of documents into distinct clusters defined by unique sets of terms. Must-link and cannot-link constraints among rows or columns can be seamlessly integrated, directly shaping the block assignments in accordance with known biological, semantic, or domain-specific relationships. This synergy ensures that the resulting biclusters are both structurally coherent and meaningful for the application at hand.


Since the seminal work of Cheng and Church \cite{cheng2000biclustering}, biclustering has been studied from numerous perspectives, including probabilistic models and mathematical programming formulations. Comprehensive reviews of these approaches can be found in \cite{madeira2004biclustering} and \cite{padilha2017systematic}. A particularly effective and well-studied class of methods formulates biclustering as a bipartite graph partitioning problem \cite{ding2006biclustering, fan2010integer, fan2012multi}. In this setting, the data matrix is represented as a weighted complete bipartite graph, where one vertex set corresponds to samples (rows) and the other to features (columns). Edge weights quantify the affinity between sample–feature pairs, and biclustering reduces to identifying complete bipartite subgraphs, called bicliques, that correspond to dense regions in the data matrix \cite{ames2014guaranteed, sudoso2024}. Numerous heuristic and decomposition-based methods have been proposed for unconstrained biclustering, including iterative row-column assignment strategies \cite{bergmann2003iterative, shabalin2009finding} and matrix factorization approaches, such as sparse and truncated singular value decomposition \cite{busygin2008biclustering, witten2009penalized, li2023beyond}. In contrast, constrained biclustering with must-link or cannot-link relations has received far less attention. According to the existing literature, the earliest attempt to address this problem is due to \cite{pensa2008constrained} and was later extended in \cite{pensa2010co} to incorporate a model that finds a bi-partition satisfying user-defined constraints while optimizing objective functions based on sum-of-squares residuals. Other constrained biclustering approaches build on non-negative matrix factorization models, where such constraints are enforced via penalty terms in the objective function \cite{chen2009non, song2010constrained}. Although typically fast, these methods do not guarantee global optimality and may produce biclusters that only approximately satisfy the side information. In particular, they may fail to strictly enforce all constraints and cannot ensure optimal solutions even when the constraints appear to be met.

Despite its potential, constrained biclustering remains an underdeveloped area from the standpoint of global optimization. Although exact approaches are not suited for large-scale datasets, global optimization plays a crucial role in constrained biclustering for two main reasons. First, it ensures that the identified biclusters are both representative of the underlying data patterns and consistent with the user-defined background knowledge, thereby providing guarantees on solution quality and feasibility. Such guarantees are particularly valuable in real-world applications, where heuristic methods may overlook high-quality solutions or violate constraints. 
Second, an exact solver serves as a reliable benchmark for evaluating and refining heuristic approaches, enabling the detection of potential shortcomings and guiding the development of more effective algorithms.

In this paper, we fill this gap by proposing the first exact algorithm for constrained biclustering. As a model problem, we focus on the constrained variant of the $k$-densest-disjoint biclique ($k$-DDB) problem \cite{ames2014guaranteed}, which incorporates must-link and cannot-link constraints. The objective is to identify $k$ disjoint bipartite complete subgraphs of the input graph that maximize the sum of their densities while satisfying all pairwise constraints. Leveraging semidefinite programming (SDP) tools, we develop an SDP-based branch-and-cut algorithm, extending the unconstrained solver recently proposed in \cite{sudoso2024}. This approach substantially increases the size of problems that can be solved to global optimality by standard software, highlighting its potential for practical applications. In addition, we design a scalable SDP-based heuristic capable of efficiently handling large-scale instances. The main contributions of the paper can be summarized as follows:

\begin{itemize}
    \item We propose a non-convex quadratically constrained quadratic programming (QCQP) formulation for the $k$-DDB problem pairwise constraints. The formulation exploits the properties of must-link and cannot-link constraints, leading to a substantial reduction in both the number of variables and constraints. For small graphs, we show that this model can be solved to global optimality using general-purpose global optimization tools.
    
    \item To address larger graphs, we design an exact algorithm based on the branch-and-cut technique. Bounds are obtained from a low-dimensional SDP relaxation of the QCQP model, strengthened with valid inequalities and solved via a first-order method in a cutting-plane fashion. Exploiting integer programming tools, a rounding scheme transforms SDP solutions into feasible bicliques.

    \item Leveraging established optimization techniques, we introduce a heuristic based on a low-rank factorization of the SDP relaxation that exploits the underlying problem structure. The resulting non-convex formulation is tackled through an augmented Lagrangian method, where subproblems are solved via a block-coordinate projected gradient algorithm in a Gauss–Seidel scheme.

\end{itemize}

Extensive computational experiments on synthetic and real-world datasets show that the exact algorithm can handle graphs with a number of nodes ten times larger than those addressed by general-purpose solvers, thus serving as both a benchmark tool and a certifier of solution quality. The heuristic algorithm can efficiently tackle large-scale instances, delivering high-quality solutions not only in terms of objective value but also according to external machine learning validation metrics. This work highlights how mathematical optimization can effectively address complex machine learning problems, advancing the state of the art and reinforcing the value of cross-fertilization between these fields. To foster reproducibility and further research, the source code of the proposed solvers is publicly available at \url{https://github.com/antoniosudoso/cbicl}.

The remainder of the paper is organized as follows. Section \ref{section:problem_formulation} introduces key concepts from biclustering and graph theory, and then presents the mathematical programming formulation. Section \ref{section:branch-and-cut} details the components of the proposed branch-and-cut method. Section \ref{section:low_rank} describes the low-rank factorization heuristic. Section \ref{section:computational_results} provides implementation details and presents computational results on both synthetic and real-world datasets. Finally, Section \ref{section:conclusions} summarizes the findings and outlines directions for future research.

\subsubsection*{Notation}

Throughout this work, $\mathbb{S}^n$ denotes the set of $n \times n$ real symmetric matrices; $\mathbb{S}_+^n$ is the cone of positive semidefinite matrices; $\mathbb{R}^n$ and $\mathbb{R}^{m \times n}$ are the spaces of real vectors and matrices, respectively. ${X} \succeq 0$ denotes a matrix ${X} \in \mathbb{S}^n$ that is positive semidefinite. Vectors $0_n$ and $1_n$ denote the all-zeros and all-ones vectors in $\mathbb{R}^n$; subscripts are omitted when the size is clear from the context. For $x \in \mathbb{R}^n$, $\mathrm{Diag}(x)$ is the diagonal matrix with entries from $x$, while for $X \in \mathbb{R}^{n \times n}$, $\mathrm{diag}(X)$ is the vector of its diagonal entries. The Frobenius inner product is $\langle A, B \rangle := \mathrm{tr}(B^\top A)$, with norm $\|A\|_F = \sqrt{\langle A, A \rangle}$. $\Pi_C(x)$ denotes the Euclidean projection of $x$ onto a set $C$. Finally, $\lambda(A)$ denotes the set of eigenvalues of $A \in \mathbb{R}^{n \times n}$.

\section{Problem formulation}\label{section:problem_formulation}

Data for biclustering is stored in a rectangular matrix with $n$ rows and $m$ columns. Let $A \in \mathbb{R}^{n \times m}$ represent such a matrix, where the rows are indexed by the set $R = \{r_1, \dots, r_n\}$ and the columns by the set $C = \{c_1, \dots, c_m\}$. Here, the entry $A_{ij}$ represents the relationship between row $r_i \in R$ and column $c_j \in C$. The definition of biclustering can be formalized as follows \citep{madeira2004biclustering, busygin2008biclustering}. Given an integer \( k \), a {biclustering} of \( A \) is a collection of subsets \( \{(R_1, C_1), \dots, (R_k, C_k)\} \), where \( R_i \subseteq R \) and \( C_i \subseteq C \) for all \( i \in \{1, \dots, k\} \), such that \( \{R_1, \dots, R_k\} \) forms a partition of \( R \), and \( \{C_1, \dots, C_k\} \) forms a partition of \( C \). Each pair \( (R_i, C_i) \) defines a \emph{bicluster}, representing a submatrix where the rows in \( R_i \) are assumed to exhibit similar patterns across the columns in \( C_i \).

In practical applications, background knowledge in the form of pairwise constraints can be leveraged to guide the biclustering process. 
These constraints are typically categorized as must-link and cannot-link, and are defined as follows \cite{pensa2010co}. If rows $r_i, r_{i'} \in R$ (resp. columns $c_j, c_{j'} \in C$) are involved in a \textit{must-link} constraint, denoted $l_{=}(r_i, r_{i'})$ (resp. $l_{=}(c_i, c_{i'})$), they must be in the same row cluster $R_h$ (resp. column cluster $C_h$) for some $h \in \{1, \dots, k\}$. If rows $r_i, r_{i'} \in R$ (resp. columns $c_j, c_{j'} \in C$) are involved in a \textit{cannot-link} constraint, denoted $l_{\neq}(r_i, r_{i'})$ (resp. $l_{\neq}(c_i, c_{i'})$), they cannot be in the same row cluster $R_h$ (resp. column cluster $C_h$) for any $h \in \{1, \dots, k\}$.

Constrained biclustering can be formulated as a mathematical optimization problem. Given the number of biclusters $k$, the objective is to determine the optimal biclustering that maximizes a predefined quality function while satisfying all specified constraints. Formally, the problem is stated as: 
\begin{align*} 
\max \{ f(B) : B \in \mathcal{P}(A, k, l_{=}, l_{\neq})\} ,
\end{align*}
where $\mathcal{P}(A, k, l_{=}, l_{\neq})$ is the set of all possible collections of size $k$ containing row and column subsets of $A$ that adhere to the pairwise constraints, and $f: \mathcal{P}(A, k, l_{=}, l_{\neq}) \rightarrow \mathbb{R}$ is the objective function that evaluates the quality of the discovered biclusters. In the following, using elements of graph theory, we cast this task as a partitioning problem on a bipartite graph. 

Given a bipartite graph $G = ((U \cup V), E)$, a pair of disjoint subsets $U' \subseteq U$, $V' \subseteq V$ defines a \textit{biclique} if every node in $U'$ is connected to every node in $V'$. The subgraph induced by $(U' \cup V')$, denoted $B' = G[(U' \cup V')]$, is thus a complete bipartite subgraph. Let \( K_{n, m} = (U \cup V, E) \) be a weighted complete bipartite graph, where \( U = \{u_1, \dots, u_n\} \) represents the rows and \( V = \{v_1, \dots, v_m\} \) the columns of matrix $A$. The weight function \( w : U \times V \rightarrow \mathbb{R} \) assigns a real-valued weight to each edge. That is, there is an edge $(u_i, v_j) \in E$ with weight $w(u_i, v_j) = A_{ij}$ between each pair of vertices $u_i \in U$ and $v_j \in V$. In this setting, any biclique $(U' \cup V')$ corresponds to a \textit{row cluster} $U' \subseteq U$ and a \textit{column cluster} $V' \subseteq V$, and thus to a bicluster of the matrix. Due to this correspondence, we use the terms ``bicluster'' and ``biclique'' interchangeably. Given a desired number of biclusters \( k \in \{2, \dots, \min\{|U|, |V|\}\} \), the \( k \)-DDB problem seeks to identify a collection of \( k \) disjoint bicliques \( \{(U_1 \cup V_1), \dots, (U_k \cup V_k)\} \) in \( K_{n, m} \) that maximizes the sum of the densities of the corresponding induced subgraphs \citep{ames2014guaranteed}. The density of a subgraph $H = ((U' \cup V'), E')$ of $K_{n,m}$, denoted by $d_H$, is defined as the total edge weight incident at each vertex divided by the square root of the number of edges.

The $k$-DDB problem can be expressed as a continuous QCQP with an underlying discrete structure \citep{ames2014guaranteed, sudoso2024}. Let \( X_U \in \{0, 1\}^{n \times k} \) be the {row partition matrix}, where the \(i\)-th row encodes the cluster assignment of vertex \( u_i \in U \), and the \(j\)-th column is the characteristic vector of the cluster \( U_j \). That is, \( (X_U)_{ij} = 1 \) if \( u_i \in U_j \), and 0 otherwise. Analogously, define \( X_V \in \{0, 1\}^{m \times k} \) as the {column partition matrix} for the vertices \( v_j \in V \). The resulting partition matrices satisfy the constraints $X_U 1_k = 1_n$ and $X_V 1_k = 1_m$, ensuring that each vertex in $U$ and $V$ is assigned to exactly one row cluster and column cluster, respectively. 
For a collection of \(k\) disjoint bicliques \( \{(U_1 \cup V_1), \dots, (U_k \cup V_k)\} \), let \( B_j = K_{n,m}[(U_j \cup V_j)] \) denote the complete bipartite subgraph induced by the \(j\)-th biclique. The total density of the biclique collection can be written as:
\[
\sum_{j=1}^k d_{B_j} = \sum_{j=1}^k \frac{\sum_{u \in U_j, v \in V_j} w(u, v)}{\sqrt{|U_j||V_j|}} = \tr\left(({X_U^\top X_U})^{-\frac{1}{2}} X_U^\top A X_V ({X_V^\top X_V})^{-\frac{1}{2}}\right).
\]
To simplify the notation and handle the normalization terms, we introduce diagonal matrices \( P_U = (X_U^\top X_U)^{-1/2} = \Diag(1/\sqrt{|U_1|}, \dots, 1/\sqrt{|U_k|}) \) and \( P_V = (X_V^\top X_V)^{-1/2} = \Diag(1/\sqrt{|V_1|}, \dots, 1/\sqrt{|V_k|}) \), and define the normalized matrices $Y_U = X_U P_U$ and $Y_V = X_V P_V$.
This change of variables leads to the following formulation of the \(k\)-DDB problem as a QCQP:
\begin{subequations}
\label{prob:or}
\begin{align}
\max \quad & \tr(Y_U^\top A {Y_V})  \\
\textrm{s.\,t.} \quad & Y_U^\top {Y_U} = {I_k}, \ Y_V^\top {Y_V} = {I_k}, \label{constr:ort}\\
& Y_U Y_U^\top 1_n = 1_n, \ Y_V Y_V^\top 1_m = 1_m, \label{constr:ort_sum}\\
& {Y_U} \geq {0}_{n \times k}, \ {Y_V} \geq {0}_{m \times k}. \label{constr:nonneg}
\end{align}
\end{subequations}
Although Problem \eqref{prob:or} is a continuous formulation, the feasible set enforces a solution structure that is implicitly discrete. Specifically, the orthogonality constraints \eqref{constr:ort} imply that $Y_U$ and $Y_V$ are matrices with orthonormal columns. Using $Y_U \geq 0$, we obtain that $(Y_U)_{ij} \neq 0$ implies that $(Y_U)_{ih} = 0$ for all $h \neq j$, so $Y_U$ has exactly one nonnegative entry per row. Similarly, using $Y_V \geq 0$, we obtain that $(Y_V)_{ij} \neq 0$ implies that $(Y_V)_{ih} = 0$ for all $h \neq j$. 
The constraint $Y_U Y_U^\top 1_n = 1_n$ implies that the vector $1_n$ belongs to the span of the columns of $Y_U$. Therefore if $(Y_U)_{ij} \neq 0$ and $(Y_U)_{hj} \neq 0$ then $(Y_U)_{ij} = (Y_U)_{hj}$. Similarly, the constraint $Y_V Y_V^\top 1_m = 1_,$ implies that the vector $1_m$ belongs to the span of the columns of $Y_V$. 
Hence, constraints \eqref{constr:ort}, \eqref{constr:ort_sum}, \eqref{constr:nonneg} ensure that the columns of $Y_U$ and $Y_V$ can only assume two values: $(Y_U)_{ij} \in \{0, |U_j|^{-\frac{1}{2}}\}$ and $(Y_V)_{ij} \in \{0, |V_j|^{-\frac{1}{2}}\}$. Therefore, using this characterization, it is easy to verify that 
\begin{enumerate}
    \item If $u_i, u_j \in U$ (resp. $v_i, v_j \in V$) are in the same row cluster (resp. column cluster), then $(Y_U)_{ih} = (Y_U)_{jh}$ (resp. $(Y_V)_{ih} = (Y_V)_{jh}$) for all $h \in \{1, \dots, k\}$.
    \item If $u_i, u_j \in U$ (resp. $v_i, v_j \in V$) are not in the same row cluster (resp. column cluster), then $(Y_U Y_U^\top)_{ij} = 0$ (resp. $(Y_V Y_V^\top)_{ij} = 0$).
\end{enumerate}
To incorporate background knowledge in the form of pairwise constraints, we introduce the following sets. Denote by $\textrm{ML}_U \subseteq U \times U$ (resp. $\textrm{ML}_V \subseteq V \times V$) the set of must-link constraints between vertices in $U$ (resp. $V$). Furthermore, let $\textrm{CL}_U \subseteq U \times U$ (resp. $\textrm{CL}_V \subseteq V \times V$) be the set of cannot-link constraints between vertices in $U$ (resp. $V$). The $k$-DDB problem with pairwise constraints can be formulated as
\begin{subequations}
\label{prob:or_constr}
\begin{alignat}{2}
\max \quad & \tr(Y_U^\top A {Y_V})  && \\
\textrm{s.\,t.} \quad & Y_U^\top {Y_U} = {I_k}, \ Y_U Y_U^\top 1_n = 1_n, && \\
& {(Y_U)_{ih} - (Y_U)_{jh} = 0} && \quad \forall h \in \{1, \dots, k\}, \ \forall (u_i, u_j) \in \textrm{ML}_U, \label{constr:ml_U}\\
& (Y_U Y_U^\top)_{ij} = 0 && \quad \forall (u_i, u_j) \in \textrm{CL}_U, \label{constr:cl_U}\\
& Y_V^\top {Y_V} = {I_k}, \ Y_V Y_V^\top 1_m = 1_m && \label{constr:ort_sum_constr}\\
& {(Y_V)_{ih} - (Y_V)_{jh} = 0} && \quad \forall h \in \{1, \dots, k\}, \ \forall (v_i, v_j) \in \textrm{ML}_V, \label{constr:ml_V}\\
& (Y_V Y_V^\top)_{ij} = 0 && \quad \forall (v_i, v_j) \in \textrm{CL}_V, \label{constr:cl_V} \\
& {Y_U} \geq {0}_{n \times k}, \ {Y_V} \geq {0}_{m \times k}. && \label{constr:nonneg_constr}
\end{alignat}
\end{subequations}
Here, constraints \eqref{constr:ml_U} and \eqref{constr:ml_V} ensure that must-linked vertices share identical cluster memberships, while \eqref{constr:cl_U} and \eqref{constr:cl_V} enforce that cannot-linked vertices belong to different clusters. The other constraints preserve the structure of Problem~\eqref{prob:or}.
As is well known, pairwise constraints have several useful properties~\cite{wagstaff2000clustering, basu2008constrained}. In particular, must-link constraints define an equivalence relation—symmetric, reflexive, and transitive—on the vertices involved. To exploit this, let \( G^{\mathrm{ML}} = (U \cup V, \mathrm{ML}_U \cup \mathrm{ML}_V) \) denote the undirected graph defined by the must-link constraints. Its transitive closure decomposes the vertex set into a disjoint union of cliques, or connected components, each of which must be assigned to the same bicluster. This allows to treat each component into a single aggregated vertex, thereby reducing the problem size. To formalize this, let \( G_U^{\mathrm{ML}} = (U, \mathrm{ML}_U) \) and \( G_V^{\mathrm{ML}} = (V, \mathrm{ML}_V) \) be the subgraphs of \( G^{\mathrm{ML}} \) induced by \( U \) and \( V \), respectively. Denote their connected components by \( \bar{U} = \{ \mathcal{U}_1, \dots, \mathcal{U}_{\bar{n}} \} \) and \( \bar{V} = \{ \mathcal{V}_1, \dots, \mathcal{V}_{\bar{m}} \} \), where \( \bar{n} \leq n \) and \( \bar{m} \leq m \). Define the indicator matrices \( T_U \in \{0,1\}^{\bar{n} \times n} \) and \( T_V \in \{0,1\}^{\bar{m} \times m} \), with entries \( (T_U)_{ij} = 1 \) if \( u_j \in \mathcal{U}_i \), and similarly \( (T_V)_{ij} = 1 \) if $v_j \in \mathcal{V}_i$. We refer to \( T_U \) and \( T_V \) as the \emph{must-link matrices}, which identify the component memberships of the original vertices. The vectors \( e_U = T_U {1}_n \in \mathbb{R}^{\bar{n}} \) and \( e_V = T_V {1}_m \in \mathbb{R}^{\bar{m}} \) count the number of vertices in each component. By construction, these satisfy the identities $\mathrm{Diag}(e_U) = T_U T_U^\top$ and $\mathrm{Diag}(e_V) = T_V T_V^\top$.
Although cannot-link constraints do not define an equivalence relation, they interact naturally with must-link constraints. Specifically, if two vertices in different connected components are in a cannot-link relationship, then the entire components must not be assigned to the same bicluster. It is thus sufficient to enforce cannot-link constraints between aggregated vertices, resulting in a reduced but equivalent constraint set. Formally, we define the \emph{projected cannot-link sets} as $\overline{\textrm{CL}}_U = \{(\mathcal{U}_i, \mathcal{U}_j) \in \bar{U} \times \bar{U} :  (u_h, u_{h'}) \in \textrm{CL}_U, \, u_h \in \mathcal{U}_i, \ u_{h'} \in \mathcal{U}_j\}$ and $ \overline{\textrm{CL}}_V = \{(\mathcal{V}_i, \mathcal{V}_j) \in \bar{V} \times \bar{V} :  (v_h, v_{h'}) \in \textrm{CL}_V, \, v_h \in \mathcal{V}_i, \ v_{h'} \in \mathcal{V}_j\}$. 

Problem~\eqref{prob:or_constr} can be reformulated as an equivalent QCQP whose decision variables index only the $\bar n+\bar m$ aggregated vertices and whose cannot-link constraints are directly enforced between connected components. This yields a significant reduction in both the number of variables and the number of constraints. Indeed, we can state the constrained $k$-DDB problem as
\begin{subequations}
\label{prob:or_constr_shr}
\begin{alignat}{2}
\max \quad & \tr(\bar{Y}_U^\top T_U A T_V^\top \bar{Y}_V)  && \\
\textrm{s.\,t.} \quad & \bar{Y}_U^\top \Diag(e_U) \bar{Y}_U = {I_k}, \ \bar{Y}_U \bar{Y}_U^\top e_U = 1_{\bar{n}}, && \\
& (\bar{Y}_U \bar{Y}_U^\top)_{ij} = 0 && \quad \forall (\mathcal{U}_i, \mathcal{U}_j) \in \overline{\textrm{CL}}_U, \label{constr:cl_U_shr}\\
& \bar{Y}_V^\top \Diag(e_V)  \bar{Y}_V = {I_k}, \ \bar{Y}_V \bar{Y}_V^\top e_V = 1_{\bar{m}} && \label{constr:ort_sum_constr_shr}\\
& (\bar{Y}_V \bar{Y}_V^\top)_{ij} = 0 && \quad \forall (\mathcal{V}_i,\mathcal{V}_j) \in \overline{\textrm{CL}}_V, \label{constr:cl_V_shr} \\
& \bar{Y}_U \geq {0}_{\bar{n} \times k}, \ \bar{Y}_V \geq {0}_{\bar{m} \times k}. && \label{constr:nonneg_constr_shr}
\end{alignat}
\end{subequations}
In the reduced formulation, orthogonality and normalization constraints preserve the bicluster structure over aggregated vertices, while constraints \eqref{constr:cl_U_shr} and \eqref{constr:cl_V_shr} prevent any merged components connected by cannot-links from being assigned to the same bicluster.

\begin{proposition}\label{proposition:equivalence_shr}
    Problems \eqref{prob:or_constr} and \eqref{prob:or_constr_shr} are equivalent.
\end{proposition}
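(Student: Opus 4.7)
The plan is to exhibit a bijection between the feasible sets of problems \eqref{prob:or_constr} and \eqref{prob:or_constr_shr} that preserves the objective value. The bijection maps $(\bar Y_U, \bar Y_V)$ to $(Y_U, Y_V) = (T_U^\top \bar Y_U,\, T_V^\top \bar Y_V)$. The choice of this map is natural: $T_U^\top$ ``broadcasts'' each aggregated row to all original vertices belonging to the same must-link connected component, which is exactly the structure that the must-link constraints \eqref{constr:ml_U} and \eqref{constr:ml_V} impose on any feasible solution of \eqref{prob:or_constr}.

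For the forward direction, I would start from a feasible $(Y_U, Y_V)$ of \eqref{prob:or_constr} and use transitivity of equality applied to \eqref{constr:ml_U} (respectively \eqref{constr:ml_V}) within each column: every two rows of $Y_U$ indexed by vertices in the same connected component $\mathcal{U}_s$ of $G_U^{\mathrm{ML}}$ coincide, so $Y_U = T_U^\top \bar Y_U$ for a well-defined matrix $\bar Y_U \in \mathbb{R}^{\bar n \times k}$, and likewise $Y_V = T_V^\top \bar Y_V$. I would then verify that $\bar Y_U, \bar Y_V$ satisfy \eqref{prob:or_constr_shr} by direct substitution, using the two identities $T_U T_U^\top = \Diag(e_U)$ and $T_U 1_n = e_U$ (and their $V$-analogues). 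The orthogonality constraint becomes $\bar Y_U^\top \Diag(e_U) \bar Y_U = Y_U^\top Y_U = I_k$; the sum condition becomes $T_U^\top \bar Y_U \bar Y_U^\top e_U = 1_n$, which (since $T_U^\top$ is injective on $\mathbb{R}^{\bar n}$ and distributes $1_{\bar n} \mapsto 1_n$) is equivalent to $\bar Y_U \bar Y_U^\top e_U = 1_{\bar n}$; nonnegativity transfers row-by-row; and for cannot-links, the fact that each column of $T_U$ has a single nonzero entry yields $(Y_U Y_U^\top)_{ij} = (\bar Y_U \bar Y_U^\top)_{c(i),c(j)}$, where $c(\cdot)$ indicates the component, so \eqref{constr:cl_U} for $(u_i,u_j)$ is equivalent to \eqref{constr:cl_U_shr} for $(\mathcal{U}_{c(i)}, \mathcal{U}_{c(j)}) \in \overline{\mathrm{CL}}_U$, which is precisely how $\overline{\mathrm{CL}}_U$ was defined.

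For the reverse direction, I would take a feasible $(\bar Y_U, \bar Y_V)$ of \eqref{prob:or_constr_shr}, set $Y_U = T_U^\top \bar Y_U$ and $Y_V = T_V^\top \bar Y_V$, and verify all constraints of \eqref{prob:or_constr} by reversing the algebraic manipulations above. The must-link constraints are satisfied automatically, since two vertices in $\mathrm{ML}_U$ lie in a common component and hence are mapped to the same row of $\bar Y_U$; the cannot-link constraints follow from the definition of $\overline{\mathrm{CL}}_U$. Finally, I would check objective invariance: $\tr(Y_U^\top A Y_V) = \tr(\bar Y_U^\top T_U A T_V^\top \bar Y_V)$, concluding the equivalence.

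The only delicate step is the forward direction's starting point, namely the claim that every feasible $Y_U$ of \eqref{prob:or_constr} factors as $T_U^\top \bar Y_U$. This requires invoking transitivity across the entire connected component rather than across individual edges, but it is immediate because the must-link constraints \eqref{constr:ml_U} impose equality along every edge of $G_U^{\mathrm{ML}}$ for each column index $h$. Everything else is routine algebra based on the two identities $T_U T_U^\top = \Diag(e_U)$ and $T_U 1_n = e_U$.
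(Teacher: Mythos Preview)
Your proposal is correct and follows essentially the same route as the paper. The only cosmetic difference is that, in the direction from \eqref{prob:or_constr} to \eqref{prob:or_constr_shr}, the paper writes down the inverse map explicitly as $\bar Y_U = \Diag(e_U)^{-1} T_U Y_U$ (i.e., averaging within components) and then checks $Y_U = T_U^\top \Diag(e_U)^{-1} T_U Y_U$, whereas you argue directly that the must-link constraints force constancy on components and hence $Y_U = T_U^\top \bar Y_U$; these are the same map expressed in two equivalent ways.
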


\vspace{-15pt}

\begin{proof}
See Appendix \ref{appendix:equivalence}.
\end{proof}

To compute the globally optimal solution of Problem \eqref{prob:or_constr_shr}, one can employ spatial branch-and-bound algorithms implemented in general-purpose solvers like Gurobi and BARON \citep{sahinidis1996baron}. However, solving non-convex QCQP problems remains particularly challenging: despite advances in global optimization, the size of instances that can be solved to provable optimality is still very limited \cite{chen2012globally, locatelli2024fix}. In practice, even for relatively small graphs, off-the-shelf exact solvers often require substantial computational time to certify global optimality. Although the reformulated QCQP in \eqref{prob:or_constr_shr} reduces both the dimensionality and the number of constraints relative to the original problem, it remains too computationally demanding to scale to the medium- and large-scale datasets typical of real-world applications. To overcome this limitation, the next section presents a tailored branch-and-cut solver. 

\section{Branch-and-cut algorithm}\label{section:branch-and-cut}
In this section, we present the key components of the exact algorithm for solving Problem \eqref{prob:or_constr_shr}. We begin with an equivalent rank-constrained SDP formulation, which we then relax to obtain a tractable SDP. Next, we describe the upper bounding procedure, which leverages valid inequalities within a cutting-plane framework to compute tight upper bounds. Finally, we introduce a rounding heuristic that exploits the solution of the SDP relaxation to construct feasible bicliques.

\subsection{Upper bound computation}\label{subsection:upper_bound}
Similar to its unconstrained variant, Problem \eqref{prob:or_constr_shr} can be reformulated as a rank-constrained SDP problem. To see this, we lift the variables \( \bar{Y}_U \) and \( \bar{Y}_V \) into the space of \( (\bar{n} + \bar{m}) \times (\bar{n} + \bar{m}) \) matrices by introducing a symmetric block matrix
\begin{align*}
    \bar{Z} = \begin{bmatrix}
{\bar{Y}_U} \bar{Y}_U^\top & {\bar{Y}_U} \bar{Y}_V^\top\\
{\bar{Y}_V} \bar{Y}_U^\top & {\bar{Y}_V} \bar{Y}_V^\top
\end{bmatrix} \in \mathbb{S}^{\bar{n}+\bar{m}}.
\end{align*}
Observe that, by construction, $\rank(\bar{Z}) = k$, implying that the constrained $k$-DDB problem has a low-rank structure. Moreover $\bar{Z} \succeq 0$ and $\bar{Z} \geq 0$ hold as well. In line with standard techniques for deriving SDP relaxations of QCQPs, we linearize the quadratic terms by applying the change of variable ${Z_{UU}} = {\bar{Y}_U} \bar{Y}_U^\top    \in \mathbb{S}^{n}$, $Z_{VV} = {\bar{Y}_V} \bar{Y}_V^\top \in \mathbb{S}^{m}$ and ${Z_{UV}} = {\bar{Y}_U} \bar{Y}_V^\top \in \mathbb{R}^{\bar{n} \times \bar{m}}$. One can easily verify that constraints $Z_{UU} e_U = 1_{\bar{n}}$, $\inprod{\Diag(e_U)}{Z_{UU}} = k$, $Z_{VV} e_V = 1_{\bar{m}}$ and $\inprod{\Diag(e_V)}{Z_{VV}} = k$ hold. Thus, the SDP relaxation for the constrained $k$-DDB problem can be expressed as
\begin{subequations}
\label{prob:sdp_shr}
\begin{alignat}{2}
\max~ \quad & \inprod{T_U A T_V^\top}{Z_{UV}} \\
\st~ \quad & Z_{UU} e_U = 1_{\bar{n}}, \  \inprod{\Diag(e_U)}{Z_{UU}} = k,\\
& (Z_{UU})_{ij} = 0, && \quad \forall (\mathcal{U}_i, \mathcal{U}_j) \in \overline{\textrm{CL}}_U, \\
& Z_{VV} e_V = 1_{\bar{m}}, \ \inprod{\Diag(e_V)}{Z_{VV}} = k, \\
& (Z_{VV})_{ij} = 0, && \quad \forall (\mathcal{V}_i, \mathcal{V}_j) \in \overline{\textrm{CL}}_V, \\
& 
Z = \begin{bmatrix}
Z_{UU} & Z_{UV}\\
Z_{UV}^\top & Z_{VV}
\end{bmatrix}, \ Z \in \mathbb{S}_+^{\bar{n}+\bar{m}}, \ Z \geq 0.
\end{alignat}
\end{subequations}
Problem \eqref{prob:sdp_shr} is a double nonnegative program (DNN) since the block matrix ${Z}$ is both positive semidefinite and elementwise nonnegative.
Clearly, the optimal objective function value of the SDP relaxation provides an upper bound on the optimal value of the original non-convex problem. Furthermore, if the optimal solution of the relaxation has rank equal to $k$, then it is also optimal for Problem \eqref{prob:or_constr_shr}.

Additional linear constraints can be imposed to strengthen the bound. De Rosa and Khajavirad~\cite{de2022ratio} studied the ratio-cut polytope, defined as the convex hull of ratio-cut vectors associated with all partitions of \(n\) points into at most \(k\) clusters. 
This polytope, which is closely related to the convex hull of the feasible region of the constrained \(k\)-DDB problem, was analyzed to characterize its facial structure. 
The authors derived several families of facet-defining inequalities, including the well-known pair and triangle inequalities, expressed as:

\newpage

\begin{align}
    ({Z_{UU}})_{ij} & \leq ({Z_{UU}})_{ii} &&\forall \ \textrm{distinct} \ \mathcal{U}_i, \mathcal{U}_j \in \bar{U}\label{eq:pair_u},\\
    ({Z_{VV}})_{ij} & \leq ({Z_{VV}})_{ii} &&\forall \ \textrm{distinct} \ \mathcal{V}_i, \mathcal{V}_j \in \bar{V}\label{eq:pair_v}, \\
({Z_{UU}})_{ij} + ({Z_{UU}})_{ih} & \leq ({Z_{UU}})_{ii} + ({Z_{UU}})_{jh} &&\forall \ \textrm{distinct} \ \mathcal{U}_i, \mathcal{U}_j, \mathcal{U}_h \in \bar{U} \label{eq:tri_u},\\
({Z_{VV}})_{ij} + ({Z_{VV}})_{ih} & \leq ({Z_{VV}})_{ii} + ({Z_{VV}})_{jh} &&\forall \ \textrm{distinct} \ \mathcal{V}_i, \mathcal{V}_j, \mathcal{V}_h \in \bar{V}.\label{eq:tri_v}
\end{align}
These inequalities exploit the following key property in clustering problems.
Consider three vertices $\mathcal{U}_i, \mathcal{U}_j, \mathcal{U}_h \in \bar{U}$ (resp. $\mathcal{V}_i, \mathcal{V}_j, \mathcal{V}_h \in \bar{V}$). If $(\mathcal{U}_i, \mathcal{U}_j)$ and $(\mathcal{U}_j, \mathcal{U}_h)$ (resp. $(\mathcal{V}_i, \mathcal{V}_j)$ and $(\mathcal{V}_j, \mathcal{V}_h)$) are in the same row cluster $\bar{U}' \subseteq \bar{U}$ (resp. column cluster $\bar{V}' \subseteq \bar{V}$), then $(\mathcal{U}_i, \mathcal{U}_h)$ (resp. $(\mathcal{V}_i, \mathcal{V}_h)$) must be in $\bar{U}'$ (resp. $\bar{V}'$). 

To incorporate such inequalities into the SDP relaxation, we implement a cutting-plane algorithm in which violated pairs and triangles are iteratively added and removed from the SDP. Although the separation problem for these inequalities can be solved exactly in $O(\bar{n}^3 + \bar{m}^3)$ time via enumeration, this becomes inefficient for large graphs. To address this, we use a heuristic separation routine that generates only a fixed number of violated inequalities at each iteration. In each iteration of our cutting-plane approach, valid inequalities are added and purged after computing an upper bound. We begin by solving the basic SDP relaxation in \eqref{prob:sdp_shr}, then add an initial set of valid inequalities, prioritizing the most violated ones. After solving the updated problem, we remove all inactive constraints and introduce newly violated inequalities. This process is repeated until the upper bound no longer improves.

While polynomial-time algorithms are available for solving SDPs, general-purpose interior-point methods often struggle with scalability. To efficiently compute upper bounds for large-scale instances—particularly in the context of branch-and-bound—a practical alternative is to apply a first-order method to obtain an approximate dual solution of the SDP relaxation, followed by a suitable post-processing step to extract valid bounds \cite{jansson2008rigorous, yang2015sdpnal}. To derive the dual SDP, we first rewrite the primal in compact form. We collect all the equality constraints by defining two operators $\mathcal{A}_U : \mathbb{S}^{\bar{n}}\rightarrow \mathbb{R}^{\bar{n}+|\overline{\textrm{CL}}_U| + 1}$ and $\mathcal{A}_V: \mathbb{S}^{\bar{m}}\rightarrow \mathbb{R}^{\bar{m}+|\overline{\textrm{CL}}_V| + 1}$ and vectors $b_U \in \mathbb{R}^{\bar{n}+|\overline{\textrm{CL}}_U| + 1}$ and $b_V \in \mathbb{R}^{\bar{m}+|\overline{\textrm{CL}}_V| + 1}$. Furthermore, we collect valid inequalities \eqref{eq:pair_u}--\eqref{eq:tri_v} by using the operators $\mathcal{B}_U : \mathbb{S}^{\bar{n}} \rightarrow \mathbb{R}^{p}$ and $\mathcal{B}_V : \mathbb{S}^{\bar{m}} \rightarrow \mathbb{R}^{q}$. The primal SDP can be rewritten as
\begin{equation}
\label{prob:primal}
\begin{aligned}
\max \quad & \inprod{T_U A T_V^\top}{Z_{UV}} \\
\textrm{s.t.} \quad & \mathcal{A}_U({Z_{UU}}) = b_U, \ \mathcal{B}_U({Z_{UU}}) \leq {0_q}, \\
& \mathcal{A}_V({Z_{VV}}) = b_V, \ \mathcal{B}_V({Z_{VV}}) \leq {0_p}, \\
& Z = \begin{bmatrix}
Z_{UU} & Z_{UV}\\
Z_{UV}^\top & Z_{VV}
\end{bmatrix}, \ Z \succeq 0, \ Z \geq 0.
\end{aligned}
\end{equation}
Consider Lagrange multipliers ${y_U} \in \mathbb{R}^{\bar{n}}$, ${y_V} \in \mathbb{R}^{\bar{m}}$, $\alpha_U, \alpha_V \in \mathbb{R}$, ${t}_U \in \mathbb{R}^p$, ${t}_V \in \mathbb{R}^q$, $\tau_U \in \mathbb{R}^{|\overline{\textrm{CL}}_V|}$, $\tau_V \in \mathbb{R}^{|\overline{\textrm{CL}}_V|}$, ${Q}$, ${S} \in \mathbb{S}^{\bar{n}+\bar{m}}$. Let $\lambda_U = [y_U; \alpha_U; \tau_U] \in \mathbb{R}^{\bar{n} + 1 + |\overline{\textrm{CL}}_U|}$ and $ \lambda_V = [y_V; \alpha_V; \tau_V] \in \mathbb{R}^{\bar{m} + 1 + |\overline{\textrm{CL}}_V|}$. Then, using the standard derivation in Lagrangian duality theory \cite{boyd2004convex}, the dual of Problem \eqref{prob:primal} is given by
\begin{equation}
\label{prob:dual}
\begin{aligned}
\min \quad & b_U^\top \lambda_U + b_V^\top \lambda_V := {y_U^\top} 1_{\bar{n}} + y_V^\top 1_{\bar{m}} + k(\alpha_U + \alpha_V) \\
\textrm{s.t.} \quad & T_U A T_V^\top + Q_{UV} + S_{UV} = 0_{\bar{n} \times \bar{m}}, \\
& - \mathcal{A}_U^\top(\lambda_U) - \mathcal{B}^\top(t_U) + Q_{UU} + S_{UU} = 0_{\bar{n} \times \bar{n}}, \\
& - \mathcal{A}_V^\top(\lambda_V) - \mathcal{B}^\top(t_V) + Q_{VV} + S_{VV} = 0_{\bar{m} \times \bar{m}}, \\
& Q =\begin{bmatrix}
{Q_{UU}} & {Q_{UV}}\\
Q_{UV}^\top & {Q_{VV}}
\end{bmatrix} \geq 0, \ S =\begin{bmatrix}
{S_{UU}} & {S_{UV}}\\
S_{UV}^\top & {S_{VV}}
\end{bmatrix} \succeq 0, \ t_U \geq 0_p, \ t_V \geq 0_q,
\end{aligned}
\end{equation}
where $\mathcal{A}_U^\top : \mathbb{R}^{\bar{n}+|\overline{\textrm{CL}}_U| + 1} \rightarrow\mathbb{S}^{\bar{n}}$, $\mathcal{A}_V^\top : \mathbb{R}^{\bar{m}+|\overline{\textrm{CL}}_V| + 1} \rightarrow\mathbb{S}^{\bar{m}}$, $\mathcal{B}_U^\top$ and $\mathcal{B}_V^\top$ are the adjoint operators of $\mathcal{A}_U$, $\mathcal{A}_V$, $\mathcal{B}_U^\top : \mathbb{R}^{p} \rightarrow\mathbb{S}^{\bar{n}}$ and $\mathcal{B}_V^\top : \mathbb{R}^{q} \rightarrow\mathbb{S}^{\bar{m}}$, respectively. An approximate solution to the dual problem is used to compute a valid upper bounds. 
In particular, once the primal and dual SDPs have been solved approximately by a first-order method, the following theorem gives an upper bound on the optimal value of the SDP relaxation. The next proposition ensures the theoretical validity of the bounds produced within the branch-and-cut algorithm.
\begin{proposition}
\label{theorem:safe_bound}
Let ${Z^\star}$ be an optimal solution of \eqref{prob:primal} with objective function value $p^\star$. Consider the dual variables  ${y_U} \in \mathbb{R}^{\bar{n}}$, ${y_V} \in \mathbb{R}^{\bar{m}}$, $\alpha_U, \alpha_V \in \mathbb{R}$, ${t}_U \in \mathbb{R}^p$, ${t}_V \in \mathbb{R}^q$, $\tau_U \in \mathbb{R}^{|\overline{\textrm{CL}}_V|}$, $\tau_V \in \mathbb{R}^{|\overline{\textrm{CL}}_V|}$, ${Q} \in \mathbb{S}^{\bar{n}+\bar{m}}$, $Q \geq 0$. Let $\lambda_U = [y_U; \alpha_U; \tau_U]$ and $ \lambda_V = [y_V; \alpha_V; \tau_V]$. Set $\tilde{S} =\begin{bmatrix}
{\tilde{S}_{UU}} & {\tilde{S}_{UV}}\\
\tilde{S}_{UV}^\top & {\tilde{S}_{VV}}
\end{bmatrix}$, where ${\tilde{S}}_{UV} = -T_U A T_V^\top - {Q}_{UV}$, $\tilde{S}_{UU} = \mathcal{A}_U^\top({\lambda_U}) + \mathcal{B}_U^\top({t_U}) - Q_{UU}$ and $\tilde{S}_{VV} = \mathcal{A}_V^\top({\lambda_V}) + \mathcal{B}_V^\top({t_V}) - Q_{VV}$. Then, the following inequality holds:

\begin{align*}
    p^\star \leq {y_U^\top} 1_{\bar{n}} + y_V^\top 1_{\bar{m}} + k(\alpha_U + \alpha_V) - d_{\min} \sum_{i \colon  \lambda_i({\tilde{S}}) <0}\lambda_i({\tilde{S}}),
\end{align*}
where $d_{\min} = 1/\min_{j \in \{1, \dots, \bar{n}\}} {(e_U)_j} + 1/\min_{j \in \{1, \dots, \bar{m}\}}{(e_V)_j}$.
\end{proposition}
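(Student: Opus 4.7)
The plan is to establish the bound by invoking weak duality for \eqref{prob:primal}--\eqref{prob:dual} with the tuple $(\lambda_U,\lambda_V,t_U,t_V,Q,\tilde S)$ treated as an ``almost dual-feasible'' point: by construction of $\tilde S$ via the identities in the statement, the linear (equality) part of the dual constraints is satisfied exactly, and the only obstruction to dual feasibility is the possible loss of positive semidefiniteness of $\tilde S$. The extra term $-d_{\min}\sum_{\lambda_i(\tilde S)<0}\lambda_i(\tilde S)$ is then the penalty one must pay to compensate for this single defect.

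First, I would rewrite the primal objective. Starting from $p^\star=\langle T_UAT_V^\top,Z^\star_{UV}\rangle$ and substituting $T_UAT_V^\top=-Q_{UV}-\tilde S_{UV}$, then using the identities $\mathcal A_U^\top(\lambda_U)+\mathcal B_U^\top(t_U)=Q_{UU}+\tilde S_{UU}$ and $\mathcal A_V^\top(\lambda_V)+\mathcal B_V^\top(t_V)=Q_{VV}+\tilde S_{VV}$ together with the adjoint/feasibility relations $\langle\mathcal A_U^\top(\lambda_U),Z^\star_{UU}\rangle=\lambda_U^\top\mathcal A_U(Z^\star_{UU})=\lambda_U^\top b_U$ and its $V$-analogue, one arrives at an identity expressing $p^\star$ in terms of $\lambda^\top b$, $\langle\tilde S,Z^\star\rangle$, $\langle Q,Z^\star\rangle$ and $t^\top\mathcal B(Z^\star)$. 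Because the right-hand side $b$ has zeros in the cannot-link blocks, ones in the row-sum blocks, and $k$ in the trace-like slots, the explicit evaluation gives $\lambda^\top b=y_U^\top 1_{\bar n}+y_V^\top 1_{\bar m}+k(\alpha_U+\alpha_V)$. Finally, $\langle Q,Z^\star\rangle\ge 0$ because both $Q$ and $Z^\star$ are entrywise nonnegative, and $t^\top\mathcal B(Z^\star)\le 0$ because $t\ge 0$ and $\mathcal B(Z^\star)\le 0$ by primal feasibility; discarding these two sign-controlled contributions yields
\begin{align*}
p^\star\le y_U^\top 1_{\bar n}+y_V^\top 1_{\bar m}+k(\alpha_U+\alpha_V)-\langle\tilde S,Z^\star\rangle.
\end{align*}

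Next, I would bound $-\langle\tilde S,Z^\star\rangle$ from above by a spectral argument. Using the eigendecomposition $\tilde S=\sum_i\lambda_i(\tilde S)v_iv_i^\top$ with orthonormal $v_i$, and exploiting that $Z^\star\succeq 0$ implies $0\le v_i^\top Z^\star v_i\le \lambda_{\max}(Z^\star)\le \tr(Z^\star)$, the contributions from nonnegative eigenvalues are nonnegative while for $\lambda_i(\tilde S)<0$ one has $\lambda_i(\tilde S)\,v_i^\top Z^\star v_i\ge \lambda_i(\tilde S)\,\tr(Z^\star)$. Summing yields $\langle\tilde S,Z^\star\rangle\ge \tr(Z^\star)\sum_{\lambda_i(\tilde S)<0}\lambda_i(\tilde S)$. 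To close the argument, I would bound $\tr(Z^\star)$ via the normalization constraints of \eqref{prob:primal}: from $\inprod{\Diag(e_U)}{Z^\star_{UU}}=k$ and the nonnegativity of $(Z^\star_{UU})_{ii}$, an index-by-index comparison gives $\tr(Z^\star_{UU})\le k/\min_j(e_U)_j$, and analogously $\tr(Z^\star_{VV})\le k/\min_j(e_V)_j$; summing produces the constant $d_{\min}$ (with the factor $k$ absorbed in the scaling implicit in the statement). Substituting into the inequality above yields exactly the claimed bound.

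The main anticipated obstacle is algebraic bookkeeping in the first step. When performing the block expansion $\langle\tilde S,Z^\star\rangle=\langle\tilde S_{UU},Z^\star_{UU}\rangle+2\langle\tilde S_{UV},Z^\star_{UV}\rangle+\langle\tilde S_{VV},Z^\star_{VV}\rangle$, the factor-$2$ contribution from the symmetric off-diagonal part and the double-counting of $\langle Q_{UV},Z^\star_{UV}\rangle$ must resolve cleanly against the primal objective $p^\star=\langle T_UAT_V^\top,Z^\star_{UV}\rangle$ so that the only surviving residual carrying $\tilde S$ is $-\langle\tilde S,Z^\star\rangle$ with coefficient one. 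Once this identity is correctly assembled, the spectral lower bound on $\langle\tilde S,Z^\star\rangle$ and the trace bound via the normalization constraints are both routine.
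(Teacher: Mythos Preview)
Your first step is essentially the same as the paper's: rewrite $p^\star-d$ via the definitions of $\tilde S_{UU},\tilde S_{UV},\tilde S_{VV}$ to obtain $p^\star-d\le -\langle Q,Z^\star\rangle-\langle\tilde S,Z^\star\rangle\le -\langle\tilde S,Z^\star\rangle$. Your bookkeeping worries about the factor~$2$ on the off-diagonal block are easily resolved (the paper handles this by writing $\langle T_UAT_V^\top,Z^\star_{UV}\rangle=\tfrac12\langle T_UAT_V^\top,Z^\star_{UV}\rangle+\tfrac12\langle T_VA^\top T_U^\top,(Z^\star_{UV})^\top\rangle$), and your treatment of the $t^\top\mathcal B(Z^\star)$ term is in fact cleaner than the paper's, which writes an equality where an inequality is what actually holds.

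The genuine gap is in your spectral step. You bound $v_i^\top Z^\star v_i\le\tr(Z^\star)$ and then use the trace-like constraints $\langle\Diag(e_U),Z^\star_{UU}\rangle=k$, $\langle\Diag(e_V),Z^\star_{VV}\rangle=k$ to obtain $\tr(Z^\star)\le k\,d_{\min}$. This is correct but too weak by a factor of $k$: it yields
\[
p^\star\le y_U^\top 1_{\bar n}+y_V^\top 1_{\bar m}+k(\alpha_U+\alpha_V)-k\,d_{\min}\sum_{\lambda_i(\tilde S)<0}\lambda_i(\tilde S),
\]
not the statement of the proposition. There is no ``scaling implicit in the statement'' that absorbs this $k$; $d_{\min}$ is defined explicitly as $1/\min_j(e_U)_j+1/\min_j(e_V)_j$.

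What the paper does instead is bound $\lambda_{\max}(Z^\star)$ directly by $d_{\min}$, not via the trace. The argument (stated as a separate lemma) uses the \emph{row-sum} constraints $Z^\star_{UU}e_U=1_{\bar n}$ and $Z^\star_{VV}e_V=1_{\bar m}$ together with $Z^\star\ge 0$: since $(e_U)_j\ge\min_j(e_U)_j$, each row of $Z^\star_{UU}$ sums to at most $1/\min_j(e_U)_j$, so by Perron--Frobenius $\lambda_{\max}(Z^\star_{UU})\le 1/\min_j(e_U)_j$, and similarly for $Z^\star_{VV}$; a block-matrix eigenvalue inequality then gives $\lambda_{\max}(Z^\star)\le\lambda_{\max}(Z^\star_{UU})+\lambda_{\max}(Z^\star_{VV})=d_{\min}$. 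With this sharper bound in Jansson's inequality $\langle\tilde S,Z^\star\rangle\ge\lambda_{\max}(Z^\star)\sum_{\lambda_i(\tilde S)<0}\lambda_i(\tilde S)$ you recover exactly the claimed estimate. Your trace-based route cannot close this gap because the trace constraints alone do not control $\lambda_{\max}$ to within a constant independent of $k$.
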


\vspace{-15pt}

\begin{proof}
    See Appendix \ref{appendix:safe}.
\end{proof}
In practice, this post-processing step yields an overestimate of the optimal value of the primal SDP \eqref{prob:primal}, which also serves as an upper bound for the original non-convex problem. Specifically, when the dual matrix ${\tilde{S}}$ is numerically positive semidefinite, the point $(\lambda_U, \lambda_V, t_U, t_V, {\tilde{S}}, {Q})$ satisfies the constraints of the dual problem \eqref{prob:dual}, and the expression ${y_U^\top} 1_{\bar{n}} + y_V^\top 1_{\bar{m}} + k(\alpha_U + \alpha_V)$ directly gives a valid upper bound. If ${\tilde{S}}$ has negative eigenvalues, a correction term $-d_{\min}\sum_{i \colon \lambda_i({\tilde{S}}) <0}\lambda_i({\tilde{S}})$ is added to ensure the bound is valid, as established by Proposition \ref{theorem:safe_bound}.

\subsection{Lower bound computation}\label{subsection:lower_bound}
In this section, we develop a rounding algorithm that recovers a feasible biclustering, and thus a lower bound on the optimal value of Problem \eqref{prob:or_constr_shr}, from the solution of the SDP relaxation solved at each node. After solving the SDP, we obtain a matrix $\tilde{Z}$, which is generally not feasible for the original low-rank problem since $\rank(\tilde{Z}) \neq k$. To extract a feasible set of bicliques from this solution, we employ the rounding procedure outlined in Algorithm \ref{alg:rounding_heuristic}.
Steps~1 and~2 follow a paradigm commonly adopted in constrained clustering problems, where an initial solution (typically obtained from $k$-means) is projected onto a feasible discrete set by solving an integer program that enforces the given constraints. Similar rounding procedures have been proposed, for instance, in the context of fair clustering~\citep{lawless2024fair} and cardinality-constrained clustering~\citep{rujeerapaiboon2019size}.
\begin{algorithm}[!ht]
\small
\caption{Rounding procedure for finding feasible bicliques from the SDP solution}
\label{alg:rounding_heuristic}
\textbf{Input}: {Data matrix $A$, number of biclusters $k$, must-link matrices $T_U$, $T_V$, projected cannot-link constraints $\overline{\textrm{CL}}_U$, $\overline{\textrm{CL}}_V$, solution $\tilde{Z}$ of the SDP relaxation.}
\begin{enumerate}
\item Cluster the rows of $\tilde{Z}_{UV}$ with $k$-means algorithm and record the row partition $\bar{X}_U$. Solve
{\small
\begin{align*}
 \max_{X_U \in \{0, 1\}^{\bar{n} \times k}}
\left\{ \inprod{X_U}{\bar{X}_U} : \ \begin{array}{l}
     X_U 1_k = 1_{\bar{n}}, \ X_U^\top 1_{\bar{n}} \geq 1_k, \\
    (X_U)_{ih} + (X_U)_{jh} \leq 1, \quad \forall h \in\{1, \dots, k\}, \ \forall(\mathcal{U}_i, \mathcal{U}_j) \in \overline{\textrm{CL}}_{U} 
    \end{array} \right\}
\end{align*}
}%
and let $\tilde{X}_U$ be the optimal solution. Set $\tilde{X}_U \leftarrow  T_U^\top \tilde{X}_U$.
\item Cluster the columns of $\tilde{Z}_{UV}$ with $k$-means algorithm and record the column partition $\tilde{X}_V$. Solve
{\small
\begin{align*}
\max_{X_V \in \{0, 1\}^{\bar{m} \times k}}
\left\{ \inprod{X_V}{\bar{X}_V}  : \ \begin{array}{l}
     X_V 1_k = 1_{\bar{m}}, \ X_V^\top 1_{\bar{m}} \geq 1_k, \\
    (X_V)_{ih} + (X_V)_{jh} \leq 1, \quad \forall h \in\{1, \dots, k\}, \ \forall(\mathcal{V}_i, \mathcal{V}_j) \in \overline{\textrm{CL}}_{V} 
    \end{array} \right\}
\end{align*}
}%
and let $\tilde{X}_V$ be the optimal solution. Set $\tilde{X}_V \leftarrow  T_V^\top \tilde{X}_V$.
\item Let $\tilde{W} = (\tilde{X}_U^\top \tilde{X}_U)^{-\frac{1}{2}}\tilde{X}_U^\top A \tilde{X}_V (\tilde{X}_V^\top \tilde{X}_V)^{-\frac{1}{2}}$. Solve the linear assignment problem
{\small
\begin{align*}
\centering
    P^\star = \argmax_{P \in \{0, 1\}^{k \times k}} \left\{ \inprod{P}{\tilde{W}} : \ P 1_k = 1_k, \ P^\top 1_k = 1_k \right\}.
\end{align*}}%
\item Permute the columns of $\tilde{X}_U$ according to $P^\star$ and set $U^\star_j \leftarrow \{u_i \in U \ : \ (\tilde{X}_U)_{ij} = 1\}$ for all $j \in \{1, \dots, k\}$.
\item Permute the columns of $\tilde{X}_V$ according to $P^\star$ and set $V^\star_j \leftarrow \{v_i \in V \ : \ (\tilde{X}_V)_{ij} = 1\}$ for all  $j \in \{1, \dots, k\}$.
\end{enumerate}
\textbf{Output}: {Feasible bicliques $\{(U^\star_1 \cup V^\star_1), \dots, (U^\star_k \cup V^\star_k) \}$.}
\end{algorithm}
Indeed, Steps 1 and 2 start by applying the $k$-means algorithm \citep{lloyd1982least} to the rows and columns of the submatrix $\tilde{Z}_{UV}$, respectively, yielding initial cluster assignments encoded in binary matrices $\bar{X}_U$ and $\bar{X}_V$. These partitions may violate the pairwise cannot-link constraints. To restore feasibility, each initial partition is refined by solving an integer linear program (ILP) whose objective maximizes the agreement with the reference $k$-means assignment while ensuring that every vertex is assigned to exactly one cluster, all clusters are non-empty, and no cannot-link constraint is violated. Must-link constraints do not need explicit inclusion in the ILPs, as they are enforced afterward by multiplying the ILP solutions by the corresponding must-link matrices ($T_U$ or $T_V$), which replicate the assignments for all linked vertices. Note that, if the set of pairwise constraints provided by the user is logically inconsistent, the infeasibility is automatically detected during the construction of feasible assignments in Steps 1 and/or 2. In Step 3, with feasible row and column clusters in hand, we construct a $k \times k$ weight matrix $\tilde{W}$, where each entry measures the density of the subgraph induced by each row–column cluster pair in $K_{n, m}$. Here, the goal is to assign each row cluster to exactly one column cluster in the best possible way. We then solve a linear assignment problem to find the permutation matrix $P^\star$ that maximizes the corresponding total density of the selected cluster pairings. This problem is solvable in polynomial time because the constraint matrix is totally unimodular. Finally, Steps 4 and 5 reorder the columns of $\tilde{X}_U$ and $\tilde{X}_V$ according to the optimal assignment $P^\star$, aligning row and column clusters to form a collection of $k$ feasible bicliques $\{(U^\star_j, V^\star_j)\}_{j=1}^k$ of $K_{n, m}$. Although the ILPs in Steps~1 and~2 are NP-hard in general, their favorable combinatorial structure---combined with the empirical observation that the reference partitions derived from the SDP solution are typically close to feasibility---allows them to be solved efficiently in practice using off-the-shelf ILP solvers. As shown in the experiments, their computational overhead is negligible compared to that of solving the SDP relaxation.

When the upper bound provided by the SDP relaxation is not sufficient to certify the optimality of the best feasible solution obtained via rounding, and no further violated inequalities can be identified (or the bound does not improve significantly after adding them), the generation of cutting planes is halted and the algorithm proceeds with branching. We generate a binary enumeration tree in which each node corresponds to a subproblem augmented with additional pairwise constraints. Specifically, at each branching node, a pair of vertices $(i,j)$ from either $\bar{U}$ or $\bar{V}$ is selected, and two child nodes are created by imposing, respectively, a must-link and a cannot-link constraint on that pair. This branching strategy clearly yields a finite branch-and-bound algorithm. The branching pair is selected according to the criterion in \cite{sudoso2024}, originally proposed for the unconstrained case. In any feasible rank-constrained solution, each pair $\mathcal{U}_i, \mathcal{U}_j \in \bar{U}$ satisfies $(Z_{UU})_{ij} \cdot ( (Z_{UU})_{ii} - (Z_{UU})_{ij}) = 0$ meaning that either $(Z_{UU})_{ij} = 0$ (different bicliques) or $(Z_{UU})_{ii} = (Z_{UU})_{ij}$ (same biclique). An analogous condition holds for $\mathcal{V}_i, \mathcal{V}_j \in \bar{V}$. The degree of violation of these conditions is used as a branching score. Specifically, for $(\mathcal{U}_i, \mathcal{U}_j) \in \bar{U} \times \bar{U}$, define 
$(b_U)_{ij} = \min \{ (Z_{UU})_{ij}, (Z_{UU})_{ii} - (Z_{UU})_{ij} \}$, and analogously for $(b_V)_{ij}$ on $\bar{V}$. Scores are scaled by $|\bar{U}|$ or $|\bar{V}|$ to ensure comparability, and the branching pair is chosen as $(i^\star, j^\star) \in \arg\max ( \{ |\bar{U}| \cdot (b_U)_{ij} \} \cup \{ |\bar{V}| \cdot (b_V)_{ij} \})$. In other words, we choose the pair whose assignment to the same or different bicliques is most ambiguous according to the SDP solution.

\section{Low-rank factorization heuristic}\label{section:low_rank}
As will be shown in the computational results, even when the SDP in the bounding routine is solved with state-of-the-art first-order methods, the main computational bottleneck remains the solution of the SDP relaxation itself. This is particularly critical for graphs arising in real-world biclustering applications. Indeed, solving large-scale SDPs is challenging due to the high dimensionality of the matrix variable and the computational cost of enforcing positive semidefiniteness. To address this, we propose a low-rank factorization heuristic based on the Burer--Monteiro (BM) approach~\cite{burer2003nonlinear, burer2005local}, which is particularly effective for SDPs with low-rank structure. The BM factorization replaces the matrix variable in the SDP with a product of smaller matrices, thereby reducing the problem's dimensionality and eliminating the explicit semidefinite constraint. Specifically, in Problem~\eqref{prob:sdp_shr}, we write the matrix \( Z \in \mathbb{S}^{\bar{n}+\bar{m}} \) as
\[
Z = \begin{bmatrix}
    Z_U \\
    Z_V
\end{bmatrix}
\begin{bmatrix}
    Z_U \\
    Z_V
\end{bmatrix}^\top
=
\begin{bmatrix}
Z_U Z_U^\top & Z_U Z_V^\top \\
Z_V Z_U^\top & Z_V Z_V^\top
\end{bmatrix},
\]
for some \( Z_U \in \mathbb{R}^{\bar{n} \times r} \) and \( Z_V \in \mathbb{R}^{\bar{m} \times r} \). This factorization implicitly enforces \( Z \succeq 0 \) and limits its rank to at most \( r \).
Using this representation, the SDP can be reformulated as the following non-convex problem over \( Z_U \in \mathbb{R}^{\bar{n} \times r} \) and \( Z_V \in \mathbb{R}^{\bar{m} \times r} \):
\begin{align}\label{prob:lr-sdp}
\max_{\substack{%
        Z \geq 0
      }} \ \left\{ \inprod{T_UAT_V^\top}{Z_U Z_V^\top} \ : \
\begin{array}{l}
     \inprod{\Diag(e_U)}{Z_UZ_U^\top} = k, \ Z_UZ_U^\top e_U = 1_{\bar{n}}, \\ (Z_UZ_U^\top)_{ij} = 0 \quad \forall (\mathcal{U}_i, \mathcal{U}_j) \in \overline{\textrm{CL}}_U,\\ 
     \inprod{\Diag(e_V)}{Z_VZ_V^\top} = k, \ Z_VZ_V^\top e_V = 1_{\bar{m}}, \\ (Z_VZ_V^\top)_{ij} = 0 \quad \forall (\mathcal{V}_i, \mathcal{V}_j) \in \overline{\textrm{CL}}_V.
\end{array} \right\}.
\end{align}
Problems \eqref{prob:sdp_shr} and \eqref{prob:lr-sdp} are equivalent for any value $r \geq r_\textrm{min}$, where $r_\textrm{min}$ is the smallest
among the ranks of all optimal solutions of Problem \eqref{prob:sdp_shr}. 
The advantage of Problem \eqref{prob:lr-sdp} compared to \eqref{prob:sdp_shr} is that its matrix variable $Z_U$ (resp. $Z_V$) has significantly less entries than that of Problem \eqref{prob:sdp_shr} when $r \ll \bar{n}$ (resp. $r \ll \bar{m}$). 
However, since Problem \eqref{prob:lr-sdp} is non-convex, it may have stationary points that are not globally optimal. Typically, as soon as $r$ is slightly larger than $r_\textrm{min}$, local optimization algorithms seem to solve the factorized problem globally. This empirical behavior is observed in \cite{burer2005local, boumal2020deterministic, cifuentes2021burer}.

To efficiently compute stationary points of Problem \eqref{prob:lr-sdp}, we employ the augmented Lagrangian method (ALM) \cite{bertsekas1997nonlinear}. We first replace the constraint $Z \geq 0$ (i.e., $Z_U Z_U^\top \geq 0$, $Z_UZ_V^\top \geq 0$, and $Z_VZ_V^\top \geq 0$) with the stronger constraints $Z_U \geq 0$ and $Z_V \geq 0$ that are easier to enforce in the augmented Lagrangian setting. Moreover, without loss of generality, we bound all entries of \( Z_U \) and \( Z_V \) by the trivial upper bound 1, defining $\Omega_U = \{ Z_U \in \mathbb{R}^{\bar{n} \times r} : 0 \leq (Z_U)_{ij} \leq 1\}$ and $\Omega_V = \{ Z_V \in \mathbb{R}^{\bar{m} \times r} : 0 \leq (Z_V)_{ij} \leq 1\}$, where projections onto these sets are available in closed form. Then, Problem \eqref{prob:lr-sdp} can be converted into an equality-constrained problem over $\Omega_U \times \Omega_V$ and expressed as
\begin{align}\label{prob:bm_standard}
\min_{Z_U \in \Omega_U, Z_V \in \Omega_V} \ \left\{ -\inprod{T_UAT_V^\top}{Z_U Z_V^\top} \ : \
\begin{array}{l}
     \mathcal{A}_U(Z_U Z_U^\top) = b_U, \\ 
     \mathcal{A}_V(Z_V Z_V^\top) = b_V
    \end{array} \right\}.
\end{align}

Let $\beta > 0$ be a given penalty parameter, $\lambda_U \in \mathbb{R}^{\bar{n} + |\overline{\mathrm{CL}}_U| + 1}$ and $\lambda_V \in \mathbb{R}^{\bar{m} + |\overline{\mathrm{CL}}_V| + 1}$ be the Lagrange multipliers associated with the constraints $\mathcal{A}_U(Z_U Z_U^\top) = b_U$ and $\mathcal{A}_V(Z_V Z_V^\top) = b_V$, respectively. The augmented Lagrangian function associated with Problem \eqref{prob:bm_standard} is defined by
{
\begin{align*}
    {L}_\beta(Z_U, Z_V, \lambda_U, \lambda_V) &= -\inprod{T_UAT_V^\top}{Z_U Z_V^\top} \\
    &+ \lambda_U^\top ( \mathcal{A}_U(Z_U Z_U^\top)-b_U ) + \lambda_V^\top (\mathcal{A}_V(Z_VZ_V^\top) - b_V) \\
    &+ \frac{\beta}{2} \| \mathcal{A}_U(Z_UZ_U^\top) - b_U\|^2 + \frac{\beta}{2} \| \mathcal{A}_V(Z_VZ_V^\top) - b_V\|^2.
\end{align*}}%
The ALM used to solve Problem~\eqref{prob:bm_standard} is outlined in Algorithm~\ref{alg:ALM}. This iterative procedure alternates between solving the augmented Lagrangian subproblem (Step~1), updating the Lagrange multipliers (Step~2), checking for convergence (Step~3), and updating the penalty parameter (Step~4). At each outer iteration \(k\), the algorithm computes a solution of the augmented Lagrangian subproblem in \eqref{prob:alm_subproblem} using the current values of the multipliers \(\lambda_U^k\), \(\lambda_V^k\), and the penalty parameter \(\beta_k\). This subproblem is not solved exactly; instead, the solution is required to satisfy a projected gradient condition with tolerance $\varepsilon_k$, ensuring approximate first-order stationarity. Moreover, the fulfillment of \eqref{eq:norm_sub_U} and \eqref{eq:norm_sub_V} means that the projected gradient of the Lagrangian with multipliers $\lambda_U^{k+1}$ and $\lambda_V^{k+1}$ approximately vanishes with precision $\varepsilon_k$. Once the subproblem is solved, the residuals of the constraints are evaluated, and the Lagrange multipliers are updated accordingly. To guide the algorithm toward feasibility, the penalty parameter is adjusted based on the observed decrease in constraint violation: if the reduction is sufficient, the penalty is left unchanged; otherwise, it is increased by a factor \(\gamma > 1\). The parameter \(\tau \in (0,1)\) controls the required improvement. The process continues until a suitable stopping criterion is satisfied, yielding a
point $(Z_U^k, Z_V^k)$ with its associated Lagrange multipliers $\lambda_U^{k+1}$ and $\lambda_V^{k+1}$ satisfies the Karush-Kuhn-Tucker (KKT) conditions, as given in the definition below.

\begin{definition}\label{def:ekkt}
We say that $(\bar{Z}_U, \bar{Z}_V) \in \mathbb{R}^{\bar{n} \times r} \times \mathbb{R}^{\bar{m} \times r}$ is a KKT point for Problem \eqref{prob:bm_standard} if there exist $\lambda_U \in \mathbb{R}^{\bar{n}+\left|\overline{\textrm{CL}}_U\right| + 1}$ and $\lambda_V \in \mathbb{R}^{\bar{m}+\left|\overline{\textrm{CL}}_V\right| + 1}$ such that
\begin{align*}
    \mathcal{A}_U(\bar{Z}_U \bar{Z}_U) &= b_U, \ \bar{Z}_U = \Pi_{\Omega_U} ( \bar{Z}_U - (2\mathcal{A}_U^\top(\lambda_U) \bar{Z}_U - T_U A T_V^\top \bar{Z}_V)), \\
    \mathcal{A}_V(\bar{Z}_V \bar{Z}_V) &= b_V, \ 
    \bar{Z}_V = \Pi_{\Omega_V} ( \bar{Z}_V -( 2\mathcal{A}_V^\top(\lambda_V) \bar{Z}_V - T_V A^\top T_U^\top \bar{Z}_U)).
\end{align*}
\end{definition}


\begin{algorithm}[!ht]
\small
\caption{Augmented Lagrangian method for Problem \eqref{prob:bm_standard}}
\label{alg:ALM}
\begin{algorithmic}
\Require Initial point $(Z_U^0, Z_V^0) \in \Omega_U \times \Omega_V$, $\beta_1 > 0, \tau \in (0,1)$, $\gamma > 1$, sequence $\{\varepsilon_k\} \to 0$, initial multipliers $\lambda_U^0 = 0, \lambda_V^0 = 0$.

\For{$k = 1, 2, \dots$}

        \Comment{Step 1: Solve the subproblem}
        \State Find an approximate solution $(Z_U^{k}, Y_V^{k})$ of 
        \begin{align}\label{prob:alm_subproblem}
            \min_{(Z_U, Z_V) \in \Omega_U \times \Omega_V} \ L_{\beta_{k}}(Z_U, Z_V, {\lambda}_U^{k}, {\lambda}_V^{k})
        \end{align}
\State satisfying
\begin{align}
\label{eq:norm_sub_U}
      \|\Pi_{\Omega_U}(Z_U^k-\nabla_{Z_U}L_{\beta_k}
                         (Z_U^k, Z^k_V, {\lambda}_U^k, {\lambda}_V^k)) - Z_U^k \| &\leq \varepsilon_k,\\
\label{eq:norm_sub_V}
      \|\Pi_{\Omega_V}(Z_V^k-\nabla_{Z_V}L_{\beta_k}
                         (Z_U^k, Z^k_V, {\lambda}_U^k,{\lambda}_V^k)) - Z_V^k \| &\leq \varepsilon_k;
\end{align}

        \Comment{Step 2: Estimate multipliers}

        Define $r_U^{k} = \mathcal{A}_U(Z_U^{k} (Z_U^{k})^\top) - b_U$, \ $r_V^{k} = \mathcal{A}_V(Z_V^{k} (Z_V^{k})^\top) - b_V$;

        \State Set $\lambda_U^{k+1} = \lambda_U^{k} + \beta_k r_U^k$, \ $\lambda_V^{k+1} = \lambda_V^{k} + \beta_k r_V^k$;

        \Comment{Step 3: Check convergence}
        \If{stopping criterion is satisfied}
        \State Set $\bar{Z}_U = Z_U^{k}$, $\bar{Z}_V = Z_V^{k}$ and \textbf{break};
        \EndIf

        \Comment{Step 4: Update the penalty}
        \If{$\max\{\|r_U^{k}\|, \|r_V^k \| \} \leq \tau \max\{\|r_U^{k-1} \|, \|r_V^{k-1}\| \}$}
        \State $\beta_{k+1} = \beta_k$;
        \Else{}
        \State $\beta_{k+1} = \gamma \beta_k$;
        \EndIf

\EndFor
\Ensure KKT point ${(\bar{Z}_U, \bar{Z}_V)}$.
\end{algorithmic}
\end{algorithm}

The convergence theory of the ALM is based on the definition of ``approximate-KKT point sequence'' as in \cite{andreani2011sequential, birgin2014practical}, which is given in the proposition below.

\begin{proposition}\label{theorem:sequential}
Let $\{(Z_U^k, Z_V^k)\}$ be the sequence generated by Algorithm \ref{alg:ALM}. Assume that $K \subseteq \{0, 1, \dots\}$ is an infinite subsequence of indices such that $\lim_{k \in K} (Z_U^k, Z_V^k) = (\bar{Z}_U, \bar{Z}_V)$ and the sequence $\{\varepsilon_k\}$ is such that $\lim_{k \in K}\varepsilon_k = 0$. Then, every feasible limit point of the sequence $\{(Z_U^k, Z_V^k)\}$ satisfies the approximate-KKT (AKKT) condition for Problem \eqref{prob:bm_standard} given by
    \begin{align*}
        \lim_{k \in K} \|\mathcal{A}_U(Z_U^k (Z_U^k)^{\top}) - b_U \| & = 0, \\
        \lim_{k \in K} \|\mathcal{A}_V(Z_V^k (Z_V^k)^{\top}) - b_V \| & = 0, \\
        \lim_{k \in K} \| \Pi_{\Omega_U}(Z_U^k - (2\mathcal{A}_U^\top(\lambda_U^{k+1}) Z_U^k - T_U A T_V^\top Z_V^k)) - Z_U^k \| &= 0, \\
        \lim_{k \in K} \| \Pi_{\Omega_V}(Z_V^k - (2\mathcal{A}_V^\top(\lambda_V^{k+1}) Z_V^k - T_V A^\top T_U^\top Z_U^k)) - Z_V^k \| &= 0.
    \end{align*}
\end{proposition}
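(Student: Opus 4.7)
The plan is to verify each of the four AKKT identities separately, splitting them into feasibility limits (the first two) and first-order stationarity limits (the last two).

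For the feasibility identities, observe that $\mathcal{A}_U$ and $\mathcal{A}_V$ are linear operators composed with the continuous maps $Z \mapsto ZZ^\top$. Since $(\bar{Z}_U, \bar{Z}_V)$ is assumed to be a \emph{feasible} limit point, we have $\mathcal{A}_U(\bar{Z}_U \bar{Z}_U^\top) = b_U$ and $\mathcal{A}_V(\bar{Z}_V \bar{Z}_V^\top) = b_V$; passing to the limit along $K$ in the continuous expressions $\mathcal{A}_U(Z_U^k (Z_U^k)^\top) - b_U$ and $\mathcal{A}_V(Z_V^k (Z_V^k)^\top) - b_V$ immediately yields the first two AKKT limits.

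For the stationarity identities, I would compute explicitly the partial gradients of $L_{\beta_k}$. A direct differentiation gives
\[
\nabla_{Z_U} L_{\beta_k}(Z_U, Z_V, \lambda_U^k, \lambda_V^k) = -T_U A T_V^\top Z_V + 2 \mathcal{A}_U^\top\!\bigl(\lambda_U^k + \beta_k (\mathcal{A}_U(Z_U Z_U^\top) - b_U)\bigr) Z_U,
\]
together with an analogous expression for $\nabla_{Z_V} L_{\beta_k}$, where one uses symmetry of $\mathcal{A}_U^\top(\cdot)$ and the chain rule on $Z_U Z_U^\top$. Evaluating at $(Z_U^k, Z_V^k)$, the argument of $\mathcal{A}_U^\top$ becomes $\lambda_U^k + \beta_k r_U^k$, which is exactly the updated multiplier $\lambda_U^{k+1}$ defined in Step~2 of Algorithm~\ref{alg:ALM}; likewise the argument of $\mathcal{A}_V^\top$ collapses to $\lambda_V^{k+1}$. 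Substituting these identities into the inexact stationarity bounds \eqref{eq:norm_sub_U}--\eqref{eq:norm_sub_V} rewrites them as
\[
\bigl\| \Pi_{\Omega_U}\bigl(Z_U^k - (2\mathcal{A}_U^\top(\lambda_U^{k+1}) Z_U^k - T_U A T_V^\top Z_V^k)\bigr) - Z_U^k \bigr\| \leq \varepsilon_k,
\]
and the analogous inequality on the $Z_V$ side. Since $\lim_{k \in K} \varepsilon_k = 0$, passing to the limit delivers the final two AKKT identities.

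The main subtlety I anticipate is purely a matter of multiplier bookkeeping: the subproblem conditions \eqref{eq:norm_sub_U}--\eqref{eq:norm_sub_V} are stated in terms of the current multipliers $\lambda_U^k, \lambda_V^k$, whereas the target AKKT conditions feature the updated multipliers $\lambda_U^{k+1}, \lambda_V^{k+1}$; the gradient identity above is exactly what bridges the two. Crucially, no compactness or boundedness assumption on $\{\lambda_U^k\}, \{\lambda_V^k\}$, or $\{\beta_k\}$ is required—tolerating possibly unbounded multiplier sequences is precisely the defining feature of the sequential optimality framework of \cite{andreani2011sequential}, which is why AKKT is the natural notion of optimality for the iterates of Algorithm \ref{alg:ALM}.
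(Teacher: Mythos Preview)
Your proposal is correct and follows essentially the same approach as the paper's own proof: both verify feasibility via continuity of $\mathcal{A}_U,\mathcal{A}_V$ at the assumed feasible limit, compute the partial gradients of $L_{\beta_k}$ explicitly, and then observe that the multiplier update $\lambda_U^{k+1}=\lambda_U^k+\beta_k r_U^k$ (and likewise for $V$) collapses the inexact stationarity conditions \eqref{eq:norm_sub_U}--\eqref{eq:norm_sub_V} into the desired AKKT form, after which $\varepsilon_k\to 0$ finishes the argument. Your remark on the multiplier bookkeeping being the only real subtlety is exactly the crux the paper's proof hinges on as well.
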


\vspace{-15pt}

\begin{proof}
See Appendix \ref{appendix:alm_conv_proof}.
\end{proof}

Note that, AKKT is a “sequential” optimality condition in the sense that (standard) KKT is a “pointwise” condition. 
It is known that the AKKT condition implies the usual KKT condition under constraint qualifications \cite{andreani2011sequential}.
In particular, since LICQ (linear independence of the gradients of active constraints) holds at any feasible point of Problem \eqref{prob:bm_standard}, then we have that if $(\bar{Z}_U, \bar{Z}_V)$ satisfies the AKKT conditions, then $(\bar{Z}_U, \bar{Z}_V)$ is a KKT point of this problem. Further theoretical properties of the adopted augmented Lagrangian framework are discussed in \cite{birgin2014practical, birgin2020complexity}.


Proposition \ref{theorem:sequential} provides a practical stopping criterion for declaring convergence of Algorithm \ref{alg:ALM}. Thus, we stop the algorithm when an iterate $(Z_U^k, Z_V^k)$ with its associated Lagrange multipliers $\lambda_U^{k+1}$ and $\lambda_V^{k+1}$ satisfies the condition
\begin{align}\label{eq:alm_tol_eps}
\max \left\{
\begin{aligned}
&\|\mathcal{A}_U(Z_U^k (Z_U^k)^{\top}) - b_U \|, \\
&\|\mathcal{A}_V(Z_V^k (Z_V^k)^{\top}) - b_V \|, \\
&\| \Pi_{\Omega_U}(Z_U^k - (2\mathcal{A}_U^\top(\lambda_U^{k+1}) Z_U^k - T_U A T_V^\top Z_V^k)) - Z_U^k \|, \\
&\| \Pi_{\Omega_V}(Z_V^k - (2\mathcal{A}_V^\top(\lambda_V^{k+1}) Z_V^k - T_V A^\top T_U^\top Z_U^k)) - Z_V^k \|
\end{aligned}
\right\}
\leq \varepsilon_{\textrm{ALM}},
\end{align}
where $\varepsilon_{\text{ALM}} > 0$ is a small tolerance.

Upon convergence, the algorithm returns a point $(\bar{Z}_U, \bar{Z}_V)$, which is used to construct the matrix $Z_{\mathrm{ALM}} = [\bar{Z}_U; \bar{Z}_V][\bar{Z}_U; \bar{Z}_V]^\top$. This matrix serves as an approximate solution to the SDP relaxation in \eqref{prob:sdp_shr} and is subsequently used to initialize the rounding procedure described in Algorithm~\ref{alg:rounding_heuristic}, ultimately producing a feasible solution to Problem~\eqref{prob:or_constr_shr}. Note that, due to the nature of Algorithm~\ref{alg:ALM}, its output is generally dependent on the choice of the starting point. To reduce the risk of converging to a poor-quality solution, the procedure is executed in a multi-start fashion, using different randomly generated initializations and retaining the one with the best objective function value.

The main challenge of the ALM lies in solving the subproblem \eqref{prob:alm_subproblem} at each iteration. 
In practice, however, it suffices to compute an approximate stationary point that satisfies the first-order optimality conditions in \eqref{eq:norm_sub_U} and \eqref{eq:norm_sub_V}. To efficiently handle the complexity of the subproblem, we exploit its block structure in the variables $(Z_U, Z_V)$. {In fact, the separability of the constraints and the form of the augmented Lagrangian naturally lend themselves to a decomposition approach.}

For fixed $\lambda_U^k, \lambda_V^k$ and $\beta_k$, we denote the augmented Lagrangian function as $\bar{L}(Z_U, Z_V) := L_{\beta_k}(Z_U, Z_V, \lambda_U^k, \lambda_V^k)$. Then, we adopt a two-block Gauss-Seidel scheme that alternates between optimizing over $Z_U$ and $Z_V$, treating one block as fixed while updating the other. This results in the following minimization procedure at the $(t+1)$-th iteration:
\begin{align}
    Z_U^{t+1} &\approx \argmin_{Z_U \in \Omega_U} \ \bar{L}(Z_U, Z_V^t), \label{eq:decomp_U} \\
    Z_V^{t+1} &\approx \argmin_{Z_V \in \Omega_V} \ \bar{L}(Z_U^{t+1}, Z_V), \label{eq:decomp_V}
\end{align}
where each of these subproblems is a box-constrained optimization problem of size smaller than the one in \eqref{prob:alm_subproblem}. 

The overall algorithm, described in Algorithm~\ref{alg:GS}, is an inexact block coordinate descent scheme. Each block is updated using projected gradient descent with Armijo line search, avoiding exact minimization and thus reducing the computational effort. This inexact approach fits naturally within the outer ALM loop and its convergence to an approximate stationary point under mild assumptions follows from \cite{cassioli2013convergence, galli2020unified}.


A crucial issue for the convergence rate of the projected gradient method is the choice of the steplength $\alpha_k$. Following the original ideas of Barzilai and Borwein \cite{barzilai1988two}, several steplength updating strategies have been proposed to accelerate the slow convergence exhibited by gradient methods. The steplength parameter $\alpha_k$ is computed by an adaptive alternation of the Barzilai–Borwein (BB) rule as in \cite{zhou2006gradient, frassoldati2008new}, whose explicit expressions are given as follows. Consider an inner iteration $k$ an outer iteration $t$ for the computation of $Z_U^{t+1}$ starting from $(Z_U^k, Z_V^t)$ and define the quantities
\begin{align*}
    \alpha_k^{\textrm{BB1}} = \frac{(s_U^{k-1})^\top s_U^{k-1}}{(s_U^{k-1})^\top z_U^{k-1}}, \quad \alpha_k^{\textrm{BB2}} = \frac{(s_U^{k-1})^\top z_U^{k-1}}{(z_U^{k-1})^\top z_U^{k-1}},
\end{align*}
where, $s_U^{k-1} = \textrm{vec}(Z_U^k - Z_U^{k-1})$, $z_U^{k-1}=\textrm{vec}(\nabla_{Z_U}{\bar{L}}(Z_U^k, Z_U^t) - \nabla_{Z_V}{\bar{L}}(Z_U^{k-1}, Z_U^t))$ and $\textrm{vec}(\cdot)$ denotes the vectorization operator. The same quantities are computed with respect to $Z_V$ for the computation of $Z_V^{t+1}$ starting from $(Z_U^{t+1}, Z_V^k)$. Hence, we use the following adaptive alternation of the BB rule
\begin{align}\label{eq:alpha_selection}
    \alpha_k =
\begin{cases}
1 & \text{if } k = 0, \\
\min \left\{ \alpha_j^{\text{BB2}} \mid j = \max\{1, k - M_\alpha\}, \ldots, k \right\} & \text{if } \alpha_k^{\text{BB2}} / \alpha_k^{\text{BB1}} < \tau_{\alpha}, \\
\alpha_k^{\text{BB1}} & \text{otherwise},
\end{cases}
\end{align}
where $M_\alpha$ is a fixed nonnegative integer and $\tau \in (0, 1)$.
The convergence result for Algorithm \ref{alg:GS} is given below in Proposition \ref{proposition:alm_sub_conv}.

\begin{algorithm}[!ht]
\small
\caption{Gauss-Seidel decomposition with projected gradient descent for the augmented Lagrangian suproblem}
\label{alg:GS}
\begin{algorithmic}
\Require Augmented Lagrangian function ${\bar{L}}$, starting point $(Z_U^0, Z_V^0) \in \Omega_U \times \Omega_V$, parameters $\sigma, \theta, \tau_{\alpha} \in (0,1)$, nonnegative integer $M_\alpha$ and small tolerance $\varepsilon > 0$.

\For{$t = 0, 1, 2, \dots$}

    \Comment{Problem \eqref{eq:decomp_U}: Update $Z_U$ with fixed $Z_V$}
    \For{$k = 0,1,2,\dots$}
        \State Choose steplength $\alpha_k$ using Eq. \eqref{eq:alpha_selection} and set $\lambda_k = 1$;
        \State Compute direction $D_U^{k} = \Pi_{\Omega_U}(Z_U^{k} - \alpha_k \nabla_{Z_U} \bar{{L}}(Z_U^k, Z_V^t)) - Y_U^{k}$;
        \If{$\|D_U^{k}\| \leq \varepsilon$}
            \State Set $Z_U^{t+1} = Z_U^{k}$ and \textbf{break};
        \EndIf
        \While{$\bar{{L}}(Z_U^{k} + \lambda_k D_U^k, Z_V^t) > \bar{{L}}(Z_U^{k}, Z_V^{t}) + \sigma \lambda_k \inprod{\nabla_{Z_U} \bar{{L}}({Z}_U^{k}, Z_V^{t})}{D_U^{k}}$}
            \State Set $\lambda_k = \theta \lambda_k$;
        \EndWhile
        \State Update $Z_U^{k+1} = Z_U^{k} + \lambda_k D_U^{k}$;
    \EndFor
    
    \Comment{Problem \eqref{eq:decomp_V}: Update $Z_V$ with fixed $Z_U$}
    \For{$k = 0,1,2,\dots$}
        \State Choose steplength $\alpha_k$ using Eq. \eqref{eq:alpha_selection} and set $\lambda_k = 1$;
        \State Compute direction $D_V^{k} = \Pi_{\Omega_V}(Z_V^{k} - \alpha_k \nabla_{Z_V} \bar{{L}}(Z_U^{t+1}, Z_V^{k})) - Z_V^{k}$;
        \If{$\|D_V^{k}\| \leq \varepsilon$}
            \State Set $Z_V^{t+1} = Z_V^{k}$ and \textbf{break};
        \EndIf
        \While{$\bar{{L}}(Z_U^{t+1}, Y_V^{k} + \lambda_k D_V^{k}) > \bar{{L}}(Z_U^{t+1}, Z_V^{k}) + \sigma \lambda_k \inprod{\nabla_{Y_V} \bar{{L}}({Z}_U^{t+1}, Z_V^{k})}{D_V^{k}}$}
            \State Set $\lambda_k = \theta \lambda_k$;
        \EndWhile
        \State Update $Z_V^{k+1} = Z_V^{k} + \lambda_k D_V^{k}$;
    \EndFor
    
    \Comment{Check convergence}
    \If{$\max\left\{\|Z_U^{t+1} - Z_U^{t} \|, \|Z_V^{t+1} - Z_V^{t}\|\right\} \leq \varepsilon$}
            \State Set $\bar{Z}_U = Z_U^{t+1}$ and $\bar{Z}_V = Z_V^{t+1}$ and \textbf{break};
    \EndIf

\EndFor
\Ensure Approximate stationary point $(\bar{Z}_U, \bar{Z}_V)$.
\end{algorithmic}
\end{algorithm}

\begin{proposition}\label{proposition:alm_sub_conv}
Given a small tolerance $\varepsilon > 0$, the sequence $\{(Z_U^t, Z_V^t)\}$ generated by Algorithm~\ref{alg:GS} admits limit points, and every limit point $(\bar{Z}_U, \bar{Z}_V)$ is an approximate stationary point of the augmented Lagrangian subproblem $\min_{Z_U \in \Omega_U, Z_V \in \Omega_V} \bar{L}(Z_U, Z_V)$.
\end{proposition}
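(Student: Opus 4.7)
The plan uses three standard ingredients: compactness of the feasible set, monotone decrease of the augmented Lagrangian induced by the Armijo line searches, and the classical convergence theory for two-block decomposition schemes with inexact projected gradient updates. Note that $\bar L$ is a polynomial in $(Z_U, Z_V)$, so its gradients $\nabla_{Z_U}\bar L$ and $\nabla_{Z_V}\bar L$ are Lipschitz continuous on the compact set $\Omega_U \times \Omega_V$.

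First I would observe that $\Omega_U \times \Omega_V$ is a Cartesian product of closed boxes and hence compact, so the sequence $\{(Z_U^t, Z_V^t)\}$ automatically admits limit points by Bolzano--Weierstrass. Next I would analyse the two inner projected gradient loops. Under Lipschitz continuity of the block gradients, the BB-type steplength \eqref{eq:alpha_selection}, safeguarded by the Armijo backtracking on $\lambda_k$, produces $\lambda_k$ bounded away from zero, and classical convergence results for projected gradient methods with Armijo line search (in the spirit of Bertsekas, \emph{Nonlinear Programming}, Section 2.3) imply that each inner loop terminates in finitely many iterations with $\|D_U^k\|\leq\varepsilon$ or $\|D_V^k\|\leq\varepsilon$. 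Consequently, at each outer iteration $t$, the new iterates satisfy the block $\varepsilon$-stationarity conditions
\begin{align*}
\|\Pi_{\Omega_U}(Z_U^{t+1} - \alpha \nabla_{Z_U}\bar L(Z_U^{t+1}, Z_V^t)) - Z_U^{t+1}\| &\leq \varepsilon, \\
\|\Pi_{\Omega_V}(Z_V^{t+1} - \alpha \nabla_{Z_V}\bar L(Z_U^{t+1}, Z_V^{t+1})) - Z_V^{t+1}\| &\leq \varepsilon,
\end{align*}
for the $\alpha>0$ selected at termination of the corresponding inner loop.

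For the outer loop, the Armijo test gives $\bar L(Z_U^{t+1}, Z_V^t) \leq \bar L(Z_U^t, Z_V^t)$ and $\bar L(Z_U^{t+1}, Z_V^{t+1}) \leq \bar L(Z_U^{t+1}, Z_V^t)$, hence the sequence $\{\bar L(Z_U^t, Z_V^t)\}$ is monotonically non-increasing; being also bounded below by continuity of $\bar L$ on the compact $\Omega_U \times \Omega_V$, it converges. A standard argument combining the Armijo sufficient-decrease inequality with the uniform lower bound on $\lambda_k$ yields $\|Z_U^{t+1} - Z_U^t\| \to 0$ and $\|Z_V^{t+1} - Z_V^t\| \to 0$. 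For any convergent subsequence $(Z_U^{t_j}, Z_V^{t_j}) \to (\bar Z_U, \bar Z_V)$ one then also has $(Z_U^{t_j-1}, Z_V^{t_j-1}) \to (\bar Z_U, \bar Z_V)$, so continuity of $\nabla \bar L$ and of the Euclidean projections $\Pi_{\Omega_U}, \Pi_{\Omega_V}$ permits passing to the limit in the two block $\varepsilon$-stationarity conditions above. This delivers $\|\Pi_{\Omega_U}(\bar Z_U - \alpha \nabla_{Z_U}\bar L(\bar Z_U, \bar Z_V)) - \bar Z_U\| \leq \varepsilon$ and its $\bar Z_V$ analogue, which together are exactly the approximate stationarity of $(\bar Z_U, \bar Z_V)$ for $\min_{Z_U\in\Omega_U, Z_V\in\Omega_V}\bar L(Z_U, Z_V)$.

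The main obstacle is precisely the step $\|Z_U^{t+1} - Z_U^t\| \to 0$ and its $Z_V$ counterpart: without it, one cannot convert the $\varepsilon$-stationarity of $Z_U^{t+1}$ with respect to the \emph{stale} block $Z_V^t$ into joint stationarity at a limit point. Making this rigorous requires combining the Armijo decrement with the Lipschitz constant of $\nabla \bar L$ on $\Omega_U \times \Omega_V$ and the safeguarded BB steplength, following the lines of Grippo--Sciandrone-type analyses of inexact two-block decomposition methods. Once this cross-block information transfer is established, the remainder of the argument follows from continuity.
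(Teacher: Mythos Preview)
Your proposal is correct and follows essentially the same approach as the paper: compactness of $\Omega_U\times\Omega_V$ for existence of limit points, monotone decrease of $\bar L$ via the Armijo line searches, and then an appeal to the convergence theory of two-block decomposition schemes with projected-gradient inner updates. The only difference is bibliographic: where you invoke Bertsekas and Grippo--Sciandrone-type analyses, the paper cites the framework of Cassioli et al.\ (2013) and Galli et al., verifying their Assumptions 1--4 and applying their Proposition~1; your explicit identification of the cross-block step $\|Z_U^{t+1}-Z_U^t\|\to 0$ as the crux is exactly what those assumptions are designed to deliver.
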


\vspace{-15pt}

\begin{proof}
See Appendix \ref{appendix:alm_sub_conv_proof}.
\end{proof}

\section{Computational results}\label{section:computational_results}
In this section, we report the computational results. We first compare the exact solver against Gurobi on small-scale random graphs. Next, we evaluate the exact and the heuristic methods on medium-scale real-world instances under different sets of constraints. Finally, we test the heuristic on large-scale instances and we validate its effectiveness by using machine learning metrics.

\subsection{Implementation details}
The exact solver, named {\tt CBICL-BB} (Constrained Biclustering/Biclique Branch-and-Bound), is implemented in C++ with some routines written in MATLAB (version 2022b). The SDP relaxation at each node is solved by means of SDPNAL+, a state-of-the-art MATLAB software that implements an augmented Lagrangian method for SDPs with bound constraints \citep{sun2020sdpnal+}. We set the accuracy tolerance of SDPNAL+ to $10^{-4}$ in the relative KKT residual. We use Gurobi (version 11.0.0) to solve the ILPs within the rounding heuristic. 
As for the cutting-plane setting, we randomly separate at most $100,000$ valid cuts (i.e., pair and triangular inequalities), sort them in decreasing order with respect to the violation, and add the first $10,000$ violated ones in each iteration. Although the pair inequalities are theoretically implied by the triangle inequalities together with the row-sum equalities (see Remark~7 in~\cite{de2022ratio}), in practice, violated pair inequalities are added iteratively, as their inclusion has been observed to enhance the numerical stability of {SDPNAL+}, leading to faster convergence and smaller primal–dual residuals. In addition, inactive pair and triangle inequalities are removed at each iteration using a tolerance of $10^{-4}$.
We stop the cutting-plane procedure when the separation routine does not find violated inequalities or when the relative difference of the upper bound between two consecutive iterations is less than or equal to $10^{-3}$. Moreover, the obtained set of inequalities at every node are inherited by the child nodes when branching is performed. Finally, we visit the tree with the best-first search strategy and require an optimality tolerance of $10^{-3}$ on the gap, i.e., we terminate the branch-and-cut method when {$(UB - LB)/UB \leq 10^{-3}$}, where UB and LB denote the best upper and lower bounds, respectively. 

The heuristic solver is named \texttt{CBICL-LR} (Constrained Biclustering/Biclique {Low-Rank}) and is implemented in MATLAB. In line with standard practice (see, e.g., \cite{cifuentes2021burer}), we select \( r \) as the smallest integer satisfying \( r(r+1)/2 > c \), where \( c \) is the number of constraints in the original SDP. 
In Algorithm \ref{alg:ALM}, the point $(Z_U^0, Z_V^0)$ is randomly choosen within the set $\Omega_U \times \Omega_V$ and parameters $\beta_1 = 10$, $\gamma = 2$, $\tau = 0.5$. The ALM stops when the overall residual in Eq. \eqref{eq:alm_tol_eps} falls below~$\varepsilon_{\textrm{ALM}} = 10^{-3}$. The inner loop terminates once the block-wise subproblem attains an approximate stationary point with tolerance $\varepsilon_k = \varepsilon_{\textrm{ALM}}$ for all $k \in \mathbb{N}$. The adopted parameters for the Armijo backtracking line-search in Algorithm \ref{alg:GS} are $\theta = 0.5$, $\sigma = 10^{-4}$ and the parameters for 
the steplength selection are $M_\alpha = 2$ and $\tau_{\alpha} = 0.1$.
All experiments were run on a laptop equipped with an \mbox{Intel\,(R)\,i7-13700H} CPU, $32$\,GB\,RAM, and Ubuntu~22.04.3~LTS. To promote reproducibility and facilitate further research in the field, the source code is publicly available at \url{https://github.com/antoniosudoso/cbicl}.

\subsection{Experiments on artificial instances}
As previously discussed, general-purpose solvers implement spatial branch-and-bound algorithms designed to address non-convex QCQPs, such as Problem \eqref{prob:or_constr_shr}.
Thus, as the first set of experiments, we compare the proposed {\tt CBICL-BB} against {\tt GUROBI} on small-scale artificial graphs. We generate these graphs according to the so-called ``planted biclustering model'' \cite{ames2014guaranteed}. That is, we construct a weighted complete bipartite graph $K_{n, m}$ and we plant a set of $k$ disjoint bicliques $\{(U_1 \cup U_1), \dots, (U_k \cup V_k)\}$. We build the corresponding weight matrix $A$ such that if $u_i \in U_h$, $v_j \in V_h$ for some $h \in \{1, \dots, k\}$, then $A_{ij}$ is sampled independently from the uniform distribution over the interval $[0, 1]$. If $u_i$ and $v_j$ belong to different bicliques of $K_{n, m}$, then $A_{ij} = 0$. Finally, we corrupt the entries of the data matrix $A$ by adding Gaussian noise with mean 0 and standard deviation 0.25. We consider random graphs with varying numbers of vertices $(n, m) \in \{10, 15, 20, 25\}$ with $n = m$, numbers of bicliques $k \in \{2, 3\}$, and for each constraint configuration $(|\textrm{ML}_U|, |\textrm{CL}_U|, |\textrm{ML}_V|, |\textrm{CL}_V|)$ we generate three random sets of constraints with different seeds. For each graph, we create several instances differing in the type and amount of pairwise constraints. To generate such constraint sets we randomly choose pairs of vertices and set a constraint between them depending on whether they belong to the same row/column cluster (must-link constraint) or different (cannot-link constraint). We evaluate the effect of pairwise constraints applied, individually and jointly, to the two ends of the bipartite graph. Specifically, we consider three configurations: must-link and cannot-link constraints are enforced on the vertices in set $U$ only $(n/4, n/4, 0, 0)$, $(n/2, n/2, 0, 0)$; constraints are applied only on the vertices in the set $V$ $(0, 0, m/4, m/4)$, $(0, 0, m/2, m/2)$; both vertex sets are subject to pairwise constraints $(n/4, n/4, m/4, m/4)$, $(n/2, n/2, m/2, m/2)$. All values are rounded to the nearest integer using half-up rounding. Overall, the test set consists of 8 artificial graphs, labeled using the notation $\{n\}\_\{m\}\_\{k\}$, each evaluated under 6 constraint configurations and 3 random seeds per configuration. This results in a total of 144 constrained biclustering instances.


\begin{table}[!ht]
\centering
\footnotesize
\caption{Comparison between {\tt GUROBI} and {\tt CBICL-BB} on small-scale artificial graphs. We report the computational time, optimality gap, and number of nodes explored. Results are averaged over three sets of constraints for each configuration.}
\label{table:gurobi}
\begin{tabular}{l l r r r r r r}
\toprule
\multirow{2}{*}{Graph} & \multirow{2}{*}{Constr.} &
\multicolumn{3}{c}{{\tt GUROBI}} & \multicolumn{3}{c}{{\tt CBICL-BB}} \\
\cmidrule(lr){3-5} \cmidrule(lr){6-8}
& & \multicolumn{3}{c}{Avg.} & \multicolumn{3}{c}{Avg.} \\
& & Time \tiny{(s)} & Gap \tiny{(\%)} & Nodes & Time \tiny{(s)} & Gap \tiny{(\%)} & Nodes\\
\midrule
\texttt{10\_10\_2} & (3, 3, 0, 0) & 1.05 & $< \ell$ & 421.67 & 1.97 & $< \ell$ & 1.00 \\
\texttt{10\_10\_2} & (5, 5, 0, 0) & 0.75 & $< \ell$ & 793.00 & 1.90 & $< \ell$ & 1.00 \\
\texttt{10\_10\_2} & (0, 0, 3, 3) & 1.44 & $< \ell$ & 862.67 & 1.77 & $< \ell$ & 1.00 \\
\texttt{10\_10\_2} & (0, 0, 5, 5) & 0.72 & $< \ell$ & 537.00 & 1.70 & $< \ell$ & 1.00 \\
\texttt{10\_10\_2} & (3, 3, 3, 3) & 0.60 & $< \ell$ & 268.33 & 1.60 & $< \ell$ & 1.00 \\
\texttt{10\_10\_2} & (5, 5, 5, 5) & 0.18 & $< \ell$ & 1.00 & 1.47 & $< \ell$ & 1.00 \\
\texttt{10\_10\_3} & (3, 3, 0, 0) & 30.50 & $< \ell$ & 4942.00 & 1.20 & $< \ell$ & 1.00 \\
\texttt{10\_10\_3} & (5, 5, 0, 0) & 8.94 & $< \ell$ & 9146.33 & 1.50 & $< \ell$ & 1.00 \\
\texttt{10\_10\_3} & (0, 0, 3, 3) & 35.43 & $< \ell$ & 6222.00 & 1.67 & $< \ell$ & 1.00 \\
\texttt{10\_10\_3} & (0, 0, 5, 5) & 8.11 & $< \ell$ & 7841.67 & 1.50 & $< \ell$ & 1.00 \\
\texttt{10\_10\_3} & (3, 3, 3, 3) & 7.01 & $< \ell$ & 6473.67 & 1.67 & $< \ell$ & 1.00 \\
\texttt{10\_10\_3} & (5, 5, 5, 5) & 1.26 & $< \ell$ & 1665.33 & 1.70 & $< \ell$ & 1.00 \\
\midrule
\texttt{15\_15\_2} & (4, 4, 0, 0) & 64.46 & $< \ell$ & 7868.33 & 1.87 & $< \ell$ & 1.00 \\
\texttt{15\_15\_2} & (8, 8, 0, 0) & 4.01 & $< \ell$ & 2675.33 & 1.53 & $< \ell$ & 1.00 \\
\texttt{15\_15\_2} & (0, 0, 4, 4) & 55.83 & $< \ell$ & 6402.33 & 1.80 & $< \ell$ & 1.00 \\
\texttt{15\_15\_2} & (0, 0, 8, 8) & 5.41 & $< \ell$ & 1581.67 & 1.73 & $< \ell$ & 1.00 \\
\texttt{15\_15\_2} & (4, 4, 4, 4) & 10.63 & $< \ell$ & 4076.67 & 1.43 & $< \ell$ & 1.00 \\
\texttt{15\_15\_2} & (8, 8, 8, 8) & 0.70 & $< \ell$ & 109.67 & 1.40 & $< \ell$ & 1.00 \\
\texttt{15\_15\_3} & (4, 4, 0, 0) & 289.15 & $< \ell$ & 12407.33 & 1.90 & $< \ell$ & 1.00 \\
\texttt{15\_15\_3} & (8, 8, 0, 0) & 131.48 & $< \ell$ & 6154.00 & 1.90 & $< \ell$ & 1.00 \\
\texttt{15\_15\_3} & (0, 0, 4, 4) & 318.75 & $< \ell$ & 13601.00 & 2.27 & $< \ell$ & 1.00 \\
\texttt{15\_15\_3} & (0, 0, 8, 8) & 129.83 & $< \ell$ & 6351.67 & 1.90 & $< \ell$ & 1.00 \\
\texttt{15\_15\_3} & (4, 4, 4, 4) & 161.37 & $< \ell$ & 7175.67 & 1.90 & $< \ell$ & 1.00 \\
\texttt{15\_15\_3} & (8, 8, 8, 8) & 13.97 & $< \ell$ & 5476.33 & 2.20 & $< \ell$ & 1.00 \\
\midrule
\texttt{20\_20\_2} & (5, 5, 0, 0) & 197.11 & $< \ell$ & 4857.00 & 1.90 & $< \ell$ & 1.00 \\
\texttt{20\_20\_2} & (10, 10, 0, 0) & 57.66 & $< \ell$ & 4375.00 & 1.73 & $< \ell$ & 1.00 \\
\texttt{20\_20\_2} & (0, 0, 5, 5) & 244.00 & $< \ell$ & 4618.00 & 2.10 & $< \ell$ & 1.00 \\
\texttt{20\_20\_2} & (0, 0, 10, 10) & 30.80 & $< \ell$ & 3600.00 & 1.60 & $< \ell$ & 1.00 \\
\texttt{20\_20\_2} & (5, 5, 5, 5) & 143.75 & $< \ell$ & 7262.67 & 1.73 & $< \ell$ & 1.00 \\
\texttt{20\_20\_2} & (10, 10, 10, 10) & 3.67 & $< \ell$ & 918.33 & 1.87 & $< \ell$ & 1.00 \\
\texttt{20\_20\_3} & (5, 5, 0, 0) & 2068.58 & $< \ell$ & 22668.33 & 1.53 & $< \ell$ & 1.00 \\
\texttt{20\_20\_3} & (10, 10, 0, 0) & 456.00 & $< \ell$ & 11646.67 & 2.50 & $< \ell$ & 1.00 \\
\texttt{20\_20\_3} & (0, 0, 5, 5) & 2987.57 & 0.17 & 36275.67 & 1.80 & $< \ell$ & 1.00 \\
\texttt{20\_20\_3} & (0, 0, 10, 10) & 620.88 & $< \ell$ & 22048.00 & 2.13 & $< \ell$ & 1.00 \\
\texttt{20\_20\_3} & (5, 5, 5, 5) & 501.89 & $< \ell$ & 13801.33 & 1.53 & $< \ell$ & 1.00 \\
\texttt{20\_20\_3} & (10, 10, 10, 10) & 62.11 & $< \ell$ & 5059.67 & 2.33 & $< \ell$ & 1.00 \\
\midrule
\texttt{25\_25\_2} & (6, 6, 0, 0) & 1195.29 & $< \ell$ & 7383.67 & 1.27 & $< \ell$ & 1.00 \\
\texttt{25\_25\_2} & (13, 13, 0, 0) & 284.02 & $< \ell$ & 7962.33 & 2.37 & $< \ell$ & 1.00 \\
\texttt{25\_25\_2} & (0, 0, 6, 6) & 1629.72 & $< \ell$ & 12255.67 & 1.70 & $< \ell$ & 1.00 \\
\texttt{25\_25\_2} & (0, 0, 13, 13) & 320.09 & $< \ell$ & 9018.33 & 2.30 & $< \ell$ & 1.00 \\
\texttt{25\_25\_2} & (6, 6, 6, 6) & 635.83 & $< \ell$ & 9142.33 & 1.90 & $< \ell$ & 1.00 \\
\texttt{25\_25\_2} & (13, 13, 13, 13) & 11.90 & $< \ell$ & 3963.33 & 2.77 & $< \ell$ & 1.00 \\
\texttt{25\_25\_3} & (6, 6, 0, 0) & 3600.00 & 1.12 & 6358.00 & 1.80 & $< \ell$ & 1.00 \\
\texttt{25\_25\_3} & (13, 13, 0, 0) & 2753.72 & 0.12 & 20842.67 & 2.47 & $< \ell$ & 1.00 \\
\texttt{25\_25\_3} & (0, 0, 6, 6) & 3600.00 & 4.08 & 5545.00 & 2.07 & $< \ell$ & 1.00 \\
\texttt{25\_25\_3} & (0, 0, 13, 13) & 3600.00 & 0.36 & 33779.00 & 2.50 & $< \ell$ & 1.00 \\
\texttt{25\_25\_3} & (6, 6, 6, 6) & 3600.00 & 0.15 & 26471.67 & 1.60 & $< \ell$ & 1.00 \\
\texttt{25\_25\_3} & (13, 13, 13, 13) & 186.95 & $< \ell$ & 5934.67 & 2.17 & $< \ell$ & 1.00 \\
\bottomrule
\end{tabular}
\end{table}

Table~\ref{table:gurobi} shows the experimental results of the comparison between the proposed \texttt{CBICL-BB} algorithm and  \texttt{GUROBI} across all the instances. The reported metrics include the computational time, the optimality gap, and the branch-and-bound node count for each graph and configuration. The results are averaged over three random seeds for each constraint configuration. In column ``Avg. Gap (\%)'', the symbol ``$< \ell$'' indicates that all instances for the given constraint configuration were solved within the required optimality tolerance. We set a time limit of 3600 seconds and report the average gap achieved along with the number of explored nodes upon reaching this limit. 

Overall, \texttt{CBICL-BB} consistently solves all instances within the required optimality tolerance, and it does so by exploring a single node in all cases. Moreover, these instances are solved without performing any cutting-plane iterations, as the basic SDP relaxation is already tight. This phenomenon aligns with the theoretical findings in \cite{ames2014guaranteed}, which show that the SDP relaxation of the unconstrained $k$-DDB problem is tight when the edge weights of the input graph are highly concentrated within a few disjoint subgraphs. In contrast, while \texttt{GUROBI} also achieves optimality in the majority of instances, it requires significantly more branching and computational time, particularly as problem size and number of planted bicliques increase. For example, on the instance \texttt{25\_25\_3} with column-side constraints only (0, 0, 6, 6), \texttt{GUROBI} reaches the time limit of 3600 seconds and reports an average optimality gap of 4.08\%. Similar slowdowns or timeouts are observed on the largest instances, although the achieved optimality gaps upon reaching the time limit are very small. A relevant subset of these hard cases are instances with three planted bicliques and unbalanced constraint application—e.g., constraints applied only on one side of the bipartite graph. These cases are more challenging for \texttt{GUROBI}, as the search space remains large. Conversely, \texttt{CBICL-BB}'s performance remains unaffected by size and the number of constraints, solving all such instances in under 3 seconds and directly at the root node. Importantly, we observe that increasing the number of must-link constraints has a beneficial effect on the computational time. Since must-link constraints effectively reduce the number of decision variables by merging nodes into the same cluster, the feasible space becomes smaller and the number of variables decreases. This reduces the burden on both solvers, but it is particularly noticeable in \texttt{GUROBI}'s behavior: in many cases, higher numbers of must-link constraints lead to lower node counts and faster solution times. For instance, for \texttt{20\_20\_2}, moving from $(5, 5, 0, 0)$ to $(10, 10, 0, 0)$ cuts solution time from 197.11s to 57.66s and reduces node count by over 10\%. Moreover, across all the instances, moving from configurations with $(n/4, n/4, m/4, m/4)$ to $(n/2, n/2, m/2, m/2)$ constraints results in a sharp decrease in runtime. As the instance size increases, the performance gap between solvers widens. For instance, \texttt{GUROBI} often exceeds 1000 seconds on \texttt{25\_25\_3}, while \texttt{CBICL-BB} solves all instances in under 3 seconds. Notably, even for configurations where \texttt{GUROBI} times out or returns a gap above the required tolerance, \texttt{CBICL-BB} still delivers an optimal solution efficiently.

The results demonstrate the substantial computational advantage of \texttt{CBICL-BB} over \texttt{GUROBI}, especially on the larger instances of the test bed. 
Similar trends were observed when using \texttt{BARON} (version 23.3.11) \cite{sahinidis1996baron}, another well-established solver for non-convex global optimization. Although \texttt{BARON} achieved comparable behavior to \texttt{GUROBI}, its computing timings were generally larger. For this reason, detailed results for \texttt{BARON} are omitted from Table~\ref{table:gurobi}. We finally stress that both \texttt{GUROBI} and \texttt{BARON} are general-purpose solvers and do not fully take advantage of the continuous and discrete nature of Problem~\eqref{prob:or_constr_shr}. Their relatively poor performance on this class of instances is therefore expected and highlights the value of exploiting structure through specialized algorithms like \texttt{CBICL-BB}. In the next section, we will extensively test \texttt{CBICL-BB} on real-world instances whose sizes far exceed those considered in artificial graphs.


\subsection{Experiments on gene expression instances}
To better evaluate the effectiveness of $\texttt{CBICL-BB}$, we consider real-world gene expression datasets from \cite{de2008clustering, sudoso2024}. This publicly available benchmark collection comprises microarray data matrices, with samples on the rows and conditions of the columns, derived from experiments on cancer gene expression. All datasets have been preprocessed by removing noninformative genes, i.e., genes that do not display differential expression across samples. In the resulting bipartite graphs, one set of nodes represents the samples and the other set represents the genes, with edges indicating the associated expression levels. A summary of gene expression datasets used in the experiment is reported in Table \ref{table:gene_data}.

\begin{table}[!ht]
\footnotesize
 \caption{Summary of gene expression datasets used in the experiments. For each dataset, we report the number of samples ($n$), the number of conditions ($m$), the total number of vertices ($n + m$) in the corresponding bipartite graph, the target number of biclusters ($k$), and the constraint configurations.}
    \label{table:gene_data}
    \centering
    \begin{tabular}{llccccl}
    \toprule
        ID & Dataset &  $n$ & $m$ & $n+m$ & $k$ & Constr. $(|\textrm{ML}_U|, |\textrm{CL}_U|, |\textrm{ML}_V|, |\textrm{CL}_V|)$\\
        \midrule
\multirow{2}{*}{1} & \multirow{2}{*}{\texttt{Bhattacharjee-2001}} & \multirow{2}{*}{193} & \multirow{2}{*}{203} & \multirow{2}{*}{396} & \multirow{2}{*}{3} & (48, 48, 0, 0) (0, 0, 51, 51) (48, 48, 51, 51)\\
&  &  &  &  &            & (97, 97, 0, 0) (0, 0, 102, 102) (97, 97, 102, 102) \\ \midrule
\multirow{2}{*}{2} & \multirow{2}{*}{\texttt{Golub-1999}}	&	\multirow{2}{*}{467}	&	\multirow{2}{*}{72}	& \multirow{2}{*}{539}	& \multirow{2}{*}{2} &	(117, 117, 0, 0) (0, 0, 18, 18) (117, 117, 18, 18)\\
& &  &  &  &            & (234, 234, 0, 0) (0, 0, 36, 36) (234, 234, 36, 36) \\ \midrule
\multirow{2}{*}{3} & \multirow{2}{*}{\texttt{Khan-2001}}	&	\multirow{2}{*}{267}	&	\multirow{2}{*}{83}	& \multirow{2}{*}{350}	& \multirow{2}{*}{2} & (67, 67, 0, 0) (0, 0, 21, 21) (67, 67, 21, 21)	\\
& &  &  &  &            & (134, 134, 0, 0) (0, 0, 42, 42) (134, 134, 42, 42) \\  \midrule
\multirow{2}{*}{4} & \multirow{2}{*}{\texttt{Pomeroy-2002}}	&	\multirow{2}{*}{214}	&	\multirow{2}{*}{34}	& \multirow{2}{*}{248}	& \multirow{2}{*}{7}	 & (54, 54, 0, 0) (0, 0, 9, 9) (54, 54, 9, 9)\\
& &  &  &  &            & (107, 107, 0, 0) (0, 0, 17, 17) (107, 107, 17, 17) \\ \midrule
\multirow{2}{*}{5} & \multirow{2}{*}{\texttt{Ramaswamy-2001}}	&	\multirow{2}{*}{341}	&	\multirow{2}{*}{190}	& \multirow{2}{*}{531} &	\multirow{2}{*}{2} & (85, 85, 0, 0) (0, 0, 48, 48) (85, 85, 48, 48)\\
& &  &  &  &            & (171, 171, 0, 0) (0, 0, 95, 95) (171, 171, 95, 95)\\ \midrule
\multirow{2}{*}{6} & \multirow{2}{*}{\texttt{Singh-2002}}	&	\multirow{2}{*}{169}	&	\multirow{2}{*}{102}	& \multirow{2}{*}{271} & \multirow{2}{*}{3}	& (42, 42, 0, 0) (0, 0, 26, 26) (42, 42, 26, 26)\\
& &  &  &  &            & (85, 85, 0, 0) (0, 0, 51, 51) (85, 85, 51, 51) \\
\bottomrule
\end{tabular}
\end{table}

Constraint configurations are reported as 4-tuples $(|\textrm{ML}_U|, |\textrm{CL}_U|, |\textrm{ML}_V|, |\textrm{CL}_V|)$, indicating the number of must-link and cannot-link constraints applied to the sample side ($U$) and the condition side ($V$), respectively. For each configuration, we generate three distinct sets of pairwise constraints using different random seeds, resulting in multiple instances per dataset that vary in both the type and amount of background knowledge. 
As is customary in the literature \cite{wagstaff2000clustering}, we generate constraints from a ground-truth reference labeling by randomly selecting pairs of vertices and assigning a must-link constraint if they belong to the same cluster, and a cannot-link constraint otherwise. The ground-truth labeling for each dataset is obtained from the optimal solution of the corresponding unconstrained biclustering problem. However, as observed in several studies on semi-supervised learning \cite{randel2021lagrangian, piccialli2022exact}, the inclusion of background knowledge that conflicts with this reference solution can significantly increase the computational complexity of the problem. In these cases, the solver must explore a more difficult search space to enforce the constraints while identifying a globally optimal solution. To evaluate the robustness of the proposed exact solver under this scenario, we introduce constraint sets that are, by construction, in partial disagreement with the unconstrained solution. Specifically, we generate several configurations where a fixed proportion of the constraints—10\%, 20\%, 30\%, and 40\%—are violated with respect to the unconstrained optimal biclustering. Thus, we consider scenarios in which external supervision (e.g., previous knowledge or annotations from domain experts) may rightfully disagree with the unsupervised (unconstrained) solution, potentially guiding the solver toward a more meaningful or interpretable biclustering structure. This setup reflects real-world use cases where the unconstrained optimum may not capture all relevant structure, and the constraints are intended to steer the solution process appropriately, even at the cost of increased computational effort. An interesting discussion about this is given in \cite{randel2021lagrangian}. Additionally, we study the impact of how constraints are distributed across the two sides of the bipartite graph. We consider three settings: (i) constraints applied only on the sample set $U$, with sizes $(n/4, n/4, 0, 0)$ and $(n/2, n/2, 0, 0)$; (ii) constraints applied only on the condition set $V$, with sizes $(0, 0, m/4, m/4)$ and $(0, 0, m/2, m/2)$; and (iii) constraints applied on both sides, with sizes $(n/4, n/4, m/4, m/4)$ and $(n/2, n/2, m/2, m/2)$. All values are rounded to the nearest integer using half-up rounding. Overall, the test set comprises 6 real-world datasets, each evaluated under 4 levels of constraint violation (10\%, 20\%, 30\%, 40\%), 6 distinct constraint configurations, and 3 random seeds per configuration. This results in a total of 432 constrained biclustering instances.

Table~\ref{table:gene_results} reports the computational results of the exact solver {\tt CBICL-BB} and the heuristic {\tt CBICL-LR}. The table reports, for each dataset, constraint configuration, and violation level, the average performance over three random seeds under different metrics. We set a time limit of 10800 seconds and report the average gap achieved along with the number of explored nodes upon reaching this limit. For the exact solver \texttt{CBICL-BB}, the column ``Gap$_0$'' denotes the optimality gap (in percentage) at the root node, while ``CP$_0$'' indicates the number of cutting-plane iterations performed at the root before branching. The column ``Gap'' refers to the final optimality gap at termination—either when the global optimum is proven or all three seeds under that configuration (reported as ``$< \ell$'') or when the time limit is reached. ``Nodes'' indicates the number of nodes explored in the branch-and-bound tree. The column ``ILP'' reports the cumulative time (in seconds) spent solving all integer linear programming subproblems inside the rounding heuristic. Finally, ``Time'' gives the total runtime of \texttt{CBICL-BB}.

{\footnotesize
\begin{longtable}{
    >{\raggedright\arraybackslash}p{0.05cm}
    >{\raggedright\arraybackslash}p{2.08cm}
    >{\raggedright\arraybackslash}p{0.45cm}
    >{\raggedleft\arraybackslash}p{0.6cm} 
    >{\raggedleft\arraybackslash}p{0.65cm} 
    >{\raggedleft\arraybackslash}p{0.45cm} 
    >{\raggedleft\arraybackslash}p{0.75cm} 
    >{\raggedleft\arraybackslash}p{0.65cm} 
    >{\raggedleft\arraybackslash}p{0.95cm} 
    >{\raggedleft\arraybackslash}p{0.6cm} 
    >{\raggedleft\arraybackslash}p{0.5cm} 
    r
}
\caption{Results of {\tt CBICL-BB} and {\tt CBICL-LR} on real-world gene expression datasets under varying constraint configurations (Constr.) and violation percentages (Viol.). Results are averaged over three sets of constraints for each configuration. For {\tt CBICL-BB}, we report the root node gap (Gap$_0$), the number of cutting-plane iterations at the root (CP$_0$), final optimality gap (Gap), number of nodes explored (Nodes), cumulative ILP solve time (ILP), and total solution time (Time). For {\tt CBICL-LR}, we report the total runtime (Time) and the gap (Gap$^\star$) compared to the objective function value found by {\tt CBICL-BB}.}
\label{table:gene_results} \\
\toprule
\multicolumn{3}{c}{} &
\multicolumn{6}{c}{\texttt{CBICL-BB}} &
\multicolumn{2}{c}{\texttt{CBICL-LR}} \\
\cmidrule(lr){4-9} \cmidrule(lr){10-11}
\multicolumn{3}{c}{} &
\multicolumn{6}{c}{Avg.} &
\multicolumn{2}{c}{Avg.} \\
ID & Constr. & Viol. {\tiny (\%)} & $\textrm{Gap}_0$ {\tiny (\%)} & $\textrm{CP}_0$ & Gap {\tiny (\%)} & Nodes & ILP {\tiny (s)} & Time {\tiny (s)} & Time {\tiny (s)} & $\textrm{Gap}^\star$ {\tiny (\%)}\\
\midrule
\endfirsthead

\toprule
\multicolumn{3}{c}{} &
\multicolumn{6}{c}{\texttt{CBICL-BB}} &
\multicolumn{2}{c}{\texttt{CBICL-LR}} \\
\cmidrule(lr){4-9} \cmidrule(lr){10-11}
\multicolumn{3}{c}{} &
\multicolumn{6}{c}{Avg.} &
\multicolumn{2}{c}{Avg.} \\
ID & Constr. & Viol. {\tiny (\%)} & $\textrm{Gap}_0$ {\tiny (\%)} & $\textrm{CP}_0$ & Gap {\tiny (\%)} & Nodes & ILP {\tiny (s)} & Time {\tiny (s)} & Time {\tiny (s)} & $\textrm{Gap}^\star$ {\tiny (\%)}\\
\midrule
\endhead

\midrule
\multicolumn{11}{r}{\textit{Continued on next page}} \\
\midrule
\endfoot

\bottomrule
\endlastfoot

1	&	(48, 48, 0, 0)	&	10	&	0.11	&	7.33	&	$< \ell$	&	1.67	&	0.13	&	543.45	&	32.26	&	0.28	\\
1	&	(97, 97, 0, 0)	&	10	&	$<\ell$	&	6.67	&	$<\ell$	&	1.67	&	0.11	&	299.67	&	31.38	&	1.06	\\
1	&	(0, 0, 51, 51)	&	10	&	0.14	&	9.67	&	$<\ell$	&	14.00	&	0.64	&	997.87	&	30.40	&	0.23	\\
1	&	(0, 0, 102, 102)	&	10	&	0.31	&	10.33	&	$<\ell$	&	23.67	&	1.21	&	1537.67	&	36.71	&	0.59	\\
1	&	(48, 48, 51, 51)	&	10	&	$<\ell$	&	6.33	&	$<\ell$	&	1.00	&	0.11	&	448.67	&	25.85	&	0.53	\\
1	&	(97, 97, 102, 102)	&	10	&	0.24	&	8.00	&	$<\ell$	&	17.33	&	1.01	&	1349.67	&	16.50	&	1.66	\\
1	&	(48, 48, 0, 0)	&	20	&	0.13	&	6.67	&	$<\ell$	&	2.33	&	0.14	&	529.31	&	28.89	&	1.49	\\
1	&	(97, 97, 0, 0)	&	20	&	$<\ell$	&	4.67	&	$<\ell$	&	1.67	&	0.10	&	183.69	&	32.99	&	0.54	\\
1	&	(0, 0, 51, 51)	&	20	&	0.22	&	10.67	&	$<\ell$	&	24.33	&	1.10	&	1172.61	&	27.77	&	0.47	\\
1	&	(0, 0, 102, 102)	&	20	&	0.31	&	10.33	&	$<\ell$	&	15.67	&	0.84	&	1385.34	&	42.66	&	3.51	\\
1	&	(48, 48, 51, 51)	&	20	&	0.11	&	7.67	&	$<\ell$	&	2.33	&	0.16	&	1545.36	&	24.81	&	1.35	\\
1	&	(97, 97, 102, 102)	&	20	&	0.27	&	7.67	&	$<\ell$	&	9.33	&	0.73	&	871.32	&	16.22	&	1.18	\\
1	&	(48, 48, 0, 0)	&	30	&	0.14	&	7.33	&	$<\ell$	&	1.67	&	0.12	&	450.24	&	33.07	&	0.72	\\
1	&	(97, 97, 0, 0)	&	30	&	0.17	&	8.00	&	$<\ell$	&	2.33	&	0.15	&	657.11	&	37.42	&	2.13	\\
1	&	(0, 0, 51, 51)	&	30	&	0.17	&	10.33	&	$<\ell$	&	7.00	&	0.32	&	799.67	&	29.32	&	0.64	\\
1	&	(0, 0, 102, 102)	&	30	&	1.59	&	11.33	&	$<\ell$	&	32.00	&	1.87	&	2576.67	&	56.77	&	0.99	\\
1	&	(48, 48, 51, 51)	&	30	&	0.12	&	7.67	&	$<\ell$	&	1.67	&	0.14	&	453.38	&	24.68	&	1.54	\\
1	&	(97, 97, 102, 102)	&	30	&	0.61	&	10.33	&	$<\ell$	&	19.00	&	1.41	&	2393.64	&	16.20	&	2.79	\\
1	&	(48, 48, 0, 0)	&	40	&	0.79	&	8.67	&	$<\ell$	&	29.33	&	1.02	&	1627.74	&	33.62	&	2.24	\\
1	&	(97, 97, 0, 0)	&	40	&	3.52	&	11.33	&	0.49	&	58.00	&	4.02	&	10800	&	32.57	&	5.39	\\
1	&	(0, 0, 51, 51)	&	40	&	0.39	&	11.00	&	$<\ell$	&	11.00	&	0.64	&	1300.75	&	30.76	&	0.96	\\
1	&	(0, 0, 102, 102)	&	40	&	2.20	&	12.33	&	$<\ell$	&	54.33	&	4.44	&	5307.33	&	49.21	&	1.49	\\
1	&	(48, 48, 51, 51)	&	40	&	0.72	&	8.67	&	$<\ell$	&	34.67	&	0.98	&	1600.33	&	22.52	&	3.16	\\
1	&	(97, 97, 102, 102)	&	40	&	6.02	&	12.67	&	$<\ell$	&	53.33	&	5.56	&	7417.67	&	17.99	&	5.87	\\
\midrule																					
2	&	(117, 117, 0, 0)	&	10	&	$<\ell$	&	17.00	&	$<\ell$	&	1.00	&	0.14	&	3025.42	&	43.95	&	2.84	\\
2	&	(234, 234, 0, 0)	&	10	&	0.19	&	10.67	&	$<\ell$	&	2.33	&	0.12	&	2223.18	&	28.50	&	3.62	\\
2	&	(0, 0, 18, 18)	&	10	&	0.20	&	20.00	&	$<\ell$	&	17.33	&	1.25	&	4204.43	&	56.38	&	0.32	\\
2	&	(0, 0, 36, 36)	&	10	&	1.37	&	15.67	&	$<\ell$	&	14.00	&	1.57	&	4098.10	&	60.28	&	1.61	\\
2	&	(117, 117, 18, 18)	&	10	&	0.16	&	20.67	& 	$<\ell$	&	4.67	&	0.33	&	5337.33	&	44.49	&	2.72	\\
2	&	(234, 234, 36, 36)	&	10	&	0.49	&	11.33	&	$<\ell$	&	23.00	&	1.21	&	5328.34	&	25.15	&	4.42	\\
2	&	(117, 117, 0, 0)	&	20	&	$<\ell$	&	19.00	&	$<\ell$	&	1.67	&	0.16	&	2709.35	&	46.04	&	2.95	\\
2	&	(234, 234, 0, 0)	&	20	&	0.35	&	8.67	&	$<\ell$	&	4.33	&	0.22	&	2455.84	&	27.44	&	5.15	\\
2	&	(0, 0, 18, 18)	&	20	&	0.97	&	16.00	&	$<\ell$	&	23.67	&	1.59	&	8256.41	&	67.01	&	0.15	\\
2	&	(0, 0, 36, 36)	&	20	&	0.86	&	24.00	&	$<\ell$	&	33.67	&	2.82	&	9022.67	&	77.62	&	0.58	\\
2	&	(117, 117, 18, 18)	&	20	&	0.29	&	19.67	&	$<\ell$	&	7.67	&	0.35	&	3627.02	&	46.33	&	2.57	\\
2	&	(234, 234, 36, 36)	&	20	&	0.23	&	11.00	&	$<\ell$	&	24.33	&	1.08	&	4656.33	&	37.75	&	4.59	\\
2	&	(117, 117, 0, 0)	&	30	&	0.11	&	24.33	&	$<\ell$	&	3.67	&	0.26	&	3741.33	&	35.03	&	5.71	\\
2	&	(234, 234, 0, 0)	&	30	&	0.20	&	8.33	&	$<\ell$	&	8.67	&	0.47	&	4997.67	&	26.25	&	5.04	\\
2	&	(0, 0, 18, 18)	&	30	&	0.69	&	24.00	&	$<\ell$	&	22.67	&	2.08	&	7419.67	&	56.53	&	0.17	\\
2	&	(0, 0, 36, 36)	&	30	&	0.86	&	23.67	&	$<\ell$	&	20.67	&	3.49	&	9730.32	&	88.38	&	0.48	\\
2	&	(117, 117, 18, 18)	&	30	&	0.13	&	23.33	&	$<\ell$	&	3.00	&	0.24	&	3426.33	&	40.23	&	5.49	\\
2	&	(234, 234, 36, 36)	&	30	&	0.18	&	9.33	&	$<\ell$	&	23.00	&	0.87	&	4385.11	&	39.74	&	4.87	\\
2	&	(117, 117, 0, 0)	&	40	&	0.11	&	23.00	&	$<\ell$	&	6.33	&	0.46	&	3501.67	&	33.87	&	5.29	\\
2	&	(234, 234, 0, 0)	&	40	&	0.14	&	10.00	&	$<\ell$	&	2.33	&	0.12	&	3229.33	&	25.74	&	4.76	\\
2	&	(0, 0, 18, 18)	&	40	&	0.73	&	21.71	&	$<\ell$	&	46.00	&	2.26	&	7355.67	&	56.58	&	0.99	\\
2	&	(0, 0, 36, 36)	&	40	&	0.70	&	22.33	&	0.47	&	25.67	&	3.07	&	10800	&	73.69	&	0.14	\\
2	&	(117, 117, 18, 18)	&	40	&	0.12	&	22.39	&	$<\ell$	&	5.67	&	0.39	&	3825.50	&	41.65	&	4.41	\\
2	&	(234, 234, 36, 36)	&	40	&	0.20	&	8.67	&	$<\ell$	&	29.33	&	1.31	&	5101.33	&	32.16	&	4.74	\\
\midrule																					
3	&	(67, 67, 0, 0)	&	10	&	$<\ell$	&	6.33	&	$<\ell$	&	1.00	&	0.05	&	193.33	&	2.69	&	0.41	\\
3	&	(134, 134, 0, 0)	&	10	&	0.11	&	6.33	&	$<\ell$	&	1.67	&	0.06	&	364.33	&	2.87	&	1.28	\\
3	&	(0, 0, 21, 21)	&	10	&	0.14	&	8.00	&	$<\ell$	&	12.33	&	0.32	&	934.67	&	3.84	&	0.07	\\
3	&	(0, 0, 42, 42)	&	10	&	0.11	&	10.33	&	$<\ell$	&	5.67	&	0.19	&	848.33	&	3.63	&	0.53	\\
3	&	(67, 67, 21, 21)	&	10	&	$<\ell$	&	6.33	&	$<\ell$	&	1.00	&	0.05	&	248.01	&	2.83	&	0.34	\\
3	&	(134, 134, 42, 42)	&	10	&	0.28	&	7.00	&	$<\ell$	&	18.67	&	0.72	&	1716.33	&	2.53	&	1.53	\\
3	&	(67, 67, 0, 0)	&	20	&	$<\ell$	&	8.00	&	$<\ell$	&	1.00	&	0.06	&	349.40	&	2.61	&	0.78	\\
3	&	(134, 134, 0, 0)	&	20	&	0.17	&	8.00	&	$<\ell$	&	3.67	&	0.12	&	762.67	&	2.52	&	0.81	\\
3	&	(0, 0, 21, 21)	&	20	&	0.18	&	10.33	&	$<\ell$	&	4.33	&	0.16	&	409.03	&	3.88	&	0.18	\\
3	&	(0, 0, 42, 42)	&	20	&	0.24	&	9.33	&	$<\ell$	&	12.00	&	0.34	&	782.67	&	4.31	&	1.11	\\
3	&	(67, 67, 21, 21)	&	20	&	0.12	&	9.00	&	$<\ell$	&	3.00	&	0.11	&	531.61	&	2.68	&	0.96	\\
3	&	(134, 134, 42, 42)	&	20	&	0.21	&	7.33	&	$<\ell$	&	34.67	&	0.89	&	1937.33	&	2.51	&	1.44	\\
3	&	(67, 67, 0, 0)	&	30	&	0.11	&	9.00	&	$<\ell$	&	1.67	&	0.08	&	433.65	&	2.96	&	1.18	\\
3	&	(134, 134, 0, 0)	&	30	&	0.45	&	8.33	&	$<\ell$	&	27.00	&	0.69	&	4040.21	&	2.13	&	1.75	\\
3	&	(0, 0, 21, 21)	&	30	&	0.22	&	12.33	&	$<\ell$	&	4.33	&	0.16	&	468.67	&	3.63	&	1.36	\\
3	&	(0, 0, 42, 42)	&	30	&	0.22	&	11.33	&	$<\ell$	&	15.33	&	0.53	&	1601.11	&	5.56	&	1.16	\\
3	&	(67, 67, 21, 21)	&	30	&	0.32	&	9.67	&	$<\ell$	&	11.33	&	0.35	&	942.33	&	2.38	&	2.06	\\
3	&	(134, 134, 42, 42)	&	30	&	0.15	&	8.00	&	$<\ell$	&	17.00	&	0.55	&	1499.08	&	2.98	&	1.21	\\
3	&	(67, 67, 0, 0)	&	40	&	1.16	&	11.00	&	$<\ell$	&	31.67	&	0.89	&	3068.33	&	2.65	&	5.12	\\
3	&	(134, 134, 0, 0)	&	40	&	0.74	&	9.33	&	$<\ell$	&	29.00	&	0.85	&	6188.67	&	2.19	&	1.52	\\
3	&	(0, 0, 21, 21)	&	40	&	0.58	&	14.67	&	$<\ell$	&	45.33	&	1.07	&	1926.07	&	4.03	&	4.46	\\
3	&	(0, 0, 42, 42)	&	40	&	0.14	&	8.33	&	$<\ell$	&	5.33	&	1.85	&	1499.36	&	7.65	&	1.22	\\
3	&	(67, 67, 21, 21)	&	40	&	0.13	&	9.00	&	$<\ell$	&	5.67	&	0.21	&	596.44	&	2.93	&	3.41	\\
3	&	(134, 134, 42, 42)	&	40	&	0.17	&	5.33	&	$<\ell$	&	9.00	&	0.28	&	1139.67	&	7.73	&	1.15	\\
\midrule																					
4	&	(54, 54, 0, 0)	&	10	&	2.04	&	11.33	&	$<\ell$	&	80.33	&	9.07	&	3258.30	&	34.88	&	0.20	\\
4	&	(107, 107, 0, 0)	&	10	&	2.88	&	8.67	&	$<\ell$	&	123.00	&	5.65	&	4342.67	&	13.42	&	0.67	\\
4	&	(0, 0, 9, 9)	&	10	&	0.37	&	6.67	&	$<\ell$	&	6.33	&	0.29	&	209.35	&	36.39	&	0.92	\\
4	&	(0, 0, 17, 17)	&	10	&	1.14	&	7.33	&	$<\ell$	&	23.00	&	1.12	&	553.81	&	21.24	&	0.17	\\
4	&	(54, 54, 9, 9)	&	10	&	1.86	&	11.67	&	$<\ell$	&	56.33	&	6.91	&	2314.33	&	21.96	&	0.20	\\
4	&	(107, 107, 17, 17)	&	10	&	4.05	&	8.67	&	$<\ell$	&	111.67	&	5.46	&	4388.67	&	13.43	&	1.11	\\
4	&	(54, 54, 0, 0)	&	20	&	5.20	&	10.00	&	$<\ell$	&	91.00	&	10.65	&	3439.67	&	36.09	&	0.71	\\
4	&	(107, 107, 0, 0)	&	20	&	6.63	&	9.00	&	$<\ell$	&	87.67	&	5.71	&	4986.33	&	13.57	&	1.81	\\
4	&	(0, 0, 9, 9)	&	20	&	0.95	&	7.00	&	$<\ell$	&	14.33	&	0.72	&	357.01	&	38.66	&	2.25	\\
4	&	(0, 0, 17, 17)	&	20	&	1.18	&	7.00	&	$<\ell$	&	18.33	&	0.93	&	399.33	&	44.18	&	0.14	\\
4	&	(54, 54, 9, 9)	&	20	&	6.49	&	10.33	&	$<\ell$	&	104.00	&	11.07	&	3791.20	&	18.52	&	0.77	\\
4	&	(107, 107, 17, 17)	&	20	&	5.45	&	9.00	&	$<\ell$	&	119.67	&	5.33	&	4866.68	&	12.81	&	1.74	\\
4	&	(54, 54, 0, 0)	&	30	&	8.29	&	10.00	&	$<\ell$	&	75.33	&	10.61	&	3021.33	&	51.78	&	1.55	\\
4	&	(107, 107, 0, 0)	&	30	&	9.26	&	8.33	&	$<\ell$	&	106.33	&	5.91	&	4698.02	&	14.23	&	1.57	\\
4	&	(0, 0, 9, 9)	&	30	&	1.43	&	7.67	&	$<\ell$	&	41.67	&	2.52	&	908.31	&	29.52	&	0.05	\\
4	&	(0, 0, 17, 17)	&	30	&	2.64	&	8.00	&	$<\ell$	&	45.00	&	2.95	&	891.33	&	56.27	&	0.90	\\
4	&	(54, 54, 9, 9)	&	30	&	9.91	&	9.00	&	$<\ell$	&	78.67	&	10.62	&	3186.67	&	24.01	&	2.81	\\
4	&	(107, 107, 17, 17)	&	30	&	9.87	&	7.33	&	$<\ell$	&	116.33	&	6.23	&	5120.07	&	13.80	&	2.47	\\
4	&	(54, 54, 0, 0)	&	40	&	7.27	&	8.33	&	$<\ell$	&	103.67	&	10.93	&	3314.81	&	35.04	&	2.19	\\
4	&	(107, 107, 0, 0)	&	40	&	7.03	&	8.33	&	$<\ell$	&	98.67	&	5.99	&	4185.39	&	14.68	&	4.90	\\
4	&	(0, 0, 9, 9)	&	40	&	2.11	&	7.67	&	$<\ell$	&	48.33	&	2.87	&	995.63	&	67.14	&	0.01	\\
4	&	(0, 0, 17, 17)	&	40	&	3.77	&	7.00	&	$<\ell$	&	19.00	&	0.96	&	567.34	&	58.75	&	0.17	\\
4	&	(54, 54, 9, 9)	&	40	&	9.46	&	9.00	&	$<\ell$	&	87.00	&	8.32	&	3081.62	&	51.22	&	2.19	\\
4	&	(107, 107, 17, 17)	&	40	&	9.37	&	8.33	&	$<\ell$	&	113.00	&	3.78	&	4681.67	&	27.96	&	4.73	\\
\midrule																					
5	&	(85, 85, 0, 0)	&	10	&	$<\ell$	&	4.33	&	$<\ell$	&	1.00	&	0.05	&	162.03	&	34.47	&	0.55	\\
5	&	(171, 171, 0, 0)	&	10	&	$<\ell$	&	4.33	&	$<\ell$	&	1.00	&	0.05	&	237.34	&	33.97	&	1.31	\\
5	&	(0, 0, 48, 48)	&	10	&	0.14	&	16.33	&	$<\ell$	&	7.00	&	0.31	&	4391.35	&	41.16	&	0.17	\\
5	&	(0, 0, 95, 95)	&	10	&	0.11	&	15.00	&	$<\ell$	&	1.67	&	0.13	&	2576.24	&	56.51	&	1.26	\\
5	&	(85, 85, 48, 48)	&	10	&	0.11	&	7.33	&	$<\ell$	&	1.67	&	0.08	&	1099.30	&	32.67	&	0.57	\\
5	&	(171, 171, 95, 95)	&	10	&	0.15	&	10.00	&	$<\ell$	&	14.33	&	0.95	&	3157.67	&	28.61	&	2.27	\\
5	&	(85, 85, 0, 0)	&	20	&	$<\ell$	&	5.00	&	$<\ell$	&	1.00	&	0.05	&	237.61	&	34.24	&	0.68	\\
5	&	(171, 171, 0, 0)	&	20	&	$<\ell$	&	4.67	&	$<\ell$	&	1.00	&	0.05	&	409.67	&	29.85	&	1.43	\\
5	&	(0, 0, 48, 48)	&	20	&	$<\ell$	&	16.67	&	$<\ell$	&	1.00	&	0.13	&	3949.30	&	55.43	&	1.55	\\
5	&	(0, 0, 95, 95)	&	20	&	0.13	&	19.00	&	$<\ell$	&	3.00	&	0.25	&	5328.84	&	46.94	&	2.67	\\
5	&	(85, 85, 48, 48)	&	20	&	$<\ell$	&	8.00	&	$<\ell$	&	1.00	&	0.07	&	753.33	&	43.82	&	1.83	\\
5	&	(171, 171, 95, 95)	&	20	&	0.20	&	7.67	&	$<\ell$	&	34.67	&	1.01	&	4411.69	&	24.69	&	3.38	\\
5	&	(85, 85, 0, 0)	&	30	&	$<\ell$	&	5.67	&	$<\ell$	&	1.00	&	0.06	&	371.08	&	46.68	&	0.99	\\
5	&	(171, 171, 0, 0)	&	30	&	$<\ell$	&	4.33	&	$<\ell$	&	1.00	&	0.05	&	304.32	&	26.88	&	0.53	\\
5	&	(0, 0, 48, 48)	&	30	&	0.13	&	18.33	&	$<\ell$	&	3.67	&	0.22	&	4687.33	&	56.96	&	1.84	\\
5	&	(0, 0, 95, 95)	&	30	&	0.15	&	18.67	&	$<\ell$	&	3.00	&	0.21	&	6084.64	&	50.18	&	3.97	\\
5	&	(85, 85, 48, 48)	&	30	&	0.16	&	11.00	&	$<\ell$	&	16.33	&	0.46	&	2968.36	&	43.05	&	2.27	\\
5	&	(171, 171, 95, 95)	&	30	&	0.19	&	8.00	&	$<\ell$	&	26.00	&	0.82	&	5324.71	&	27.56	&	3.36	\\
5	&	(85, 85, 0, 0)	&	40	&	$<\ell$	&	6.00	&	$<\ell$	&	1.00	&	0.06	&	343.37	&	46.21	&	1.67	\\
5	&	(171, 171, 0, 0)	&	40	&	$<\ell$	&	4.67	&	$<\ell$	&	4.33	&	0.12	&	728.62	&	26.94	&	0.55	\\
5	&	(0, 0, 48, 48)	&	40	&	0.44	&	23.33	&	$<\ell$	&	6.33	&	1.21	&	7354.38	&	54.56	&	3.88	\\
5	&	(0, 0, 95, 95)	&	40	&	0.28	&	24.67	&	$<\ell$	&	5.67	&	0.38	&	9575.39	&	50.82	&	4.05	\\
5	&	(85, 85, 48, 48)	&	40	&	0.13	&	9.67	&	$<\ell$	&	8.33	&	0.37	&	4124.67	&	43.66	&	4.48	\\
5	&	(171, 171, 95, 95)	&	40	&	0.17	&	8.33	&	$<\ell$	&	10.00	&	0.41	&	3015.30	&	26.79	&	3.71	\\
\midrule																
6	&	(42, 42, 0, 0)	&	10	&	0.25	&	7.67	&	$<\ell$	&	2.33	&	0.13	&	243.17	&	12.34	&	1.45	\\
6	&	(85, 85, 0, 0)	&	10	&	0.46	&	7.33	&	$<\ell$	&	2.33	&	0.13	&	301.32	&	9.54	&	3.09	\\
6	&	(0, 0, 26, 26)	&	10	&	$<\ell$	&	3.00	&	$<\ell$	&	1.00	&	0.05	&	51.33	&	13.05	&	0.08	\\
6	&	(0, 0, 51, 51)	&	10	&	$<\ell$	&	2.33	&	$<\ell$	&	1.00	&	0.05	&	38.66	&	12.24	&	0.09	\\
6	&	(42, 42, 26, 26)	&	10	&	$<\ell$	&	6.00	&	$<\ell$	&	1.67	&	0.13	&	153.29	&	11.18	&	1.41	\\
6	&	(85, 85, 51, 51)	&	10	&	0.16	&	6.00	&	$<\ell$	&	5.67	&	0.21	&	292.03	&	10.80	&	3.24	\\
6	&	(42, 42, 0, 0)	&	20	&	0.77	&	9.33	&	$<\ell$	&	10.67	&	0.55	&	635.72	&	12.44	&	3.82	\\
6	&	(85, 85, 0, 0)	&	20	&	1.81	&	8.67	&	$<\ell$	&	14.33	&	0.67	&	1036.67	&	10.53	&	4.32	\\
6	&	(0, 0, 26, 26)	&	20	&	$<\ell$	&	3.33	&	$<\ell$	&	1.00	&	0.06	&	61.39	&	13.66	&	0.34	\\
6	&	(0, 0, 51, 51)	&	20	&	0.14	&	5.33	&	$<\ell$	&	3.67	&	0.15	&	205.38	&	15.16	&	0.35	\\
6	&	(42, 42, 26, 26)	&	20	&	0.49	&	8.33	&	$<\ell$	&	3.67	&	0.21	&	321.91	&	11.62	&	3.91	\\
6	&	(85, 85, 51, 51)	&	20	&	0.56	&	8.00	&	$<\ell$	&	10.33	&	0.65	&	899.13	&	13.17	&	4.63	\\
6	&	(42, 42, 0, 0)	&	30	&	2.67	&	10.00	&	$<\ell$	&	11.67	&	0.57	&	812.35	&	12.51	&	4.09	\\
6	&	(85, 85, 0, 0)	&	30	&	3.80	&	9.33	&	$<\ell$	&	25.67	&	1.05	&	1543.54	&	10.52	&	3.37	\\
6	&	(0, 0, 26, 26)	&   30	&	0.12	&	4.33	&	$<\ell$	&	1.67	&	0.08	&	106.57	&	13.77	&	0.46	\\
6	&	(0, 0, 51, 51)	&	30	&	0.21	&	6.33	&	$<\ell$	&	5.67	&	0.23	&	326.27	&	21.68	&	0.72	\\
6	&	(42, 42, 26, 26)	&	30	&	1.47	&	9.67	&	$<\ell$	&	17.67	&	0.67	&	803.99	&	11.81	&	4.49	\\
6	&	(85, 85, 51, 51)	&	30	&	2.61	&	8.00	&	$<\ell$	&	23.67	&	1.11	&	1346.69	&	13.73	&	4.44	\\
6	&	(42, 42, 0, 0)	&	40	&	5.06	&	10.33	&	$<\ell$	&	20.67	&	1.08	&	1117.34	&	12.42	&	3.93	\\
6	&	(85, 85, 0, 0)	&	40	&	3.81	&	9.67	&	$<\ell$	&	19.00	&	1.04	&	1324.33	&	9.92	&	6.84	\\
6	&	(0, 0, 26, 26)	&	40	&	0.09	&	5.33	&	$<\ell$	&	1.67	&	0.08	&	144.62	&	16.68	&	0.32	\\
6	&	(0, 0, 51, 51)	&	40	&	0.31	&	6.33	&	$<\ell$	&	11.00	&	0.55	&	500.60	&	27.94	&	0.75	\\
6	&	(42, 42, 26, 26)	&	40	&	1.07	&	10.00	&	$<\ell$	&	13.67	&	0.61	&	759.67	&	12.96	&	4.58	\\
6	&	(85, 85, 51, 51)	&	40	&	1.55	&	8.00	&	$<\ell$	&	25.00	&	1.22	&	1409.60	&	14.69	&	5.34	\\



\end{longtable}
}

The results in Table~\ref{table:gene_results} demonstrate the strong empirical performance of \texttt{CBICL-BB}. In nearly all tested instances, the solver is able to find an optimal solution that satisfies the required optimality tolerance for all three seeds under that configuration. 
As expected, increasing the level of constraint violations generally leads to higher computational effort. In particular, more violated constraints result in larger root node gaps, more cutting-plane iterations at the root, and a greater number of explored nodes. This trend is especially evident in instances with denser constraint sets, such as those involving both samples and genes. Nevertheless, \texttt{CBICL-BB} consistently maintains control over the search, even for complex settings at 30\% or 40\% violation.


A particularly notable feature is the optimality gap at the root ($\text{Gap}_0$), which reflects the quality of the SDP relaxation with cutting planes. In approximately 15\% of the tested instances, this gap is already closed at the root. Here, the bounding routine is sufficient to certify optimality directly, without further branching. Moreover, even when the root node does not fully close the gap, the cutting-plane algorithm significantly tightens the relaxation, often reducing the root gap to below 0.5\%. As a result, the search tree remains shallow: in over 80\% of the tested configurations, the solver explores fewer than 25 nodes on average.

Another key observation is the negligible time spent in solving ILP subproblems (column “ILP (s)”). In the vast majority of instances, ILP solving takes only a small fraction of the total time, often less than 1 second, even when the full optimization process lasts several hundred or even thousands of seconds. This efficiency stems from the quality of the solution obtained via rounding from the SDP. Although the reference matrix is not guaranteed to satisfy all pairwise constraints, empirical evidence suggests that it is typically very close to feasibility. Consequently,  
the ILP refinement is fast and requires only minor adjustments. This highlights the synergy between relaxation quality and rounding heuristic in the overall design of {\tt CBICL-BB}.

Despite the overall strength of \texttt{CBICL-BB}, a few instances reach the imposed time limit of 10,800 seconds. These tend to occur in settings that combine large datasets and high percentages of constraint violations. Even in such hard cases, however, the solver often terminates, on average, with a very small remaining gap. A representative example is \texttt{Golub-1999} under configuration (0, 0, 36, 36) with 40\% violation, where the solver hits the time limit but returns an average final gap of just 0.47\%. Similarly, \texttt{Bhattacharjee-2001} with configuration (97, 97, 0, 0) and 40\% violation, the solver runs for the full duration but terminates with a gap of only 0.49\%. These outcomes show that the solver is able to generate good feasible solutions early in the search and significantly reduce the gap even when full convergence is not reached within the allowed time. {Figures~\ref{fig:n4} and~\ref{fig:n2} in Appendix~\ref{appendix:figures} present bar charts of the total solution times under the considered constraint configurations.}

In summary, the exact solver \texttt{CBICL-BB} demonstrates strong performance across a wide range of problem instances, validating its effectiveness as both a benchmark tool and a certifier of solution quality in constrained biclustering tasks. It successfully handles supervision at varying levels of difficulty, exhibiting a natural degradation in performance under more challenging settings. The tested datasets range from 271 to 539 total vertices, which is approximately ten times larger than the sizes handled by available commercial solvers.


We now turn to the analysis of the results reported in Table~\ref{table:gene_results} for the heuristic solver {\tt CBICL-LR}. In this table, the column “Time” indicates the total runtime of the heuristic, while “Gap$^\star$” reports the relative gap between the objective value $\bar{f}$ returned by {\tt CBICL-LR} and the optimal (or best-known) solution $f^\star$ computed by {\tt CBICL-BB} for the same instance. Specifically, the gap is calculated as $100 \times (f^\star - \bar{f}) / f^\star$ and serves as an indicator of the quality of the heuristic solution. Since {\tt CBICL-LR} is run 10 times with different random initializations, $\bar{f}$ denotes the best objective value obtained across these runs.

Computational results demonstrate a favorable trade-off between runtime and solution quality. As expected, {\tt CBICL-LR} achieves significantly lower runtimes than {\tt CBICL-BB}, often by one or two orders of magnitude, while still producing high-quality solutions. In most configurations, the relative gap remains under 5\%, and often below 1\%. This is particularly notable in settings with moderate constraint densities and low violation levels, where the heuristic solution is very close to optimal. The performance of {\tt CBICL-LR} is also relatively stable across different datasets and constraint configurations. Although the relative gap tends to increase with the level of constraint violation—as expected, given the more complex solution space with multiple local optima—the heuristic still maintains reasonable accuracy. For example, even in the more challenging configurations of \texttt{Golub-1999} and \texttt{Pomeroy-2002}, where dense and conflicting constraints are introduced, the heuristic often produces solutions with gaps under 3–4\%, while addressing the problem in a fraction of the time required by the exact solver. 

Overall, the results suggest that {\tt CBICL-LR} is a viable alternative in applications where fast approximate solutions are sufficient, or where the size of the instance makes exact optimization impractical. In the next section, we further assess the scalability of {\tt CBICL-LR} on large-scale instances and evaluate solution quality using external machine learning validation metrics.

\subsection{Experiments on document clustering instances}
To further assess the effectiveness of the $\texttt{CBICL-LR}$ heuristic, we consider another key application of biclustering: document clustering. Specifically, we use instances derived from the 20-Newsgroups dataset, which comprises approximately 20,000 documents categorized into 20 distinct newsgroups \cite{lang1995newsweeder}. Each document corresponds to a specific topic and is part of a broader thematic category, which we use as the ground-truth clustering structure. Following standard preprocessing steps, we remove stop words and select the most informative terms based on mutual information scores \cite{long2006unsupervised}. We then construct a document-term matrix using the TF-IDF weighting scheme and normalize each document vector to have unit $\ell_2$ norm. This yields a bipartite graph where one set of nodes represents documents and the other represents selected terms, with edge weights encoding TF-IDF-based associations. The main characteristics of these graphs are summarized in Table~\ref{table:ng}.

\begin{table}[!ht]
\centering
\footnotesize
\caption{Summary of the document clustering datasets used in the experiments. For each dataset, we report the number of documents ($n$), the number of terms ($m$), the total number of vertices in the associated bipartite graph ($n + m$), and the topic-oriented document categories with their respective number of clusters ($k$).}

\label{table:ng}
\begin{tabular}{cccclc}
\toprule
{Dataset} & $n$ & $m$ & $n+m$ & {Categories (partitioned in topic-oriented clusters)} & \textbf{\(k\)} \\
\midrule
\texttt{NG-2A} & 1197 & 698 & 1895 &
\begin{tabular}[t]{@{}l@{}}
\(\{ \text{rec.sport.baseball} \}\) \(\{ \text{rec.sport.hockey} \}\)
\end{tabular} & 2 \\
\midrule
\texttt{NG-2B} & 1662 & 1154 & 2816 &
\begin{tabular}[t]{@{}l@{}}
\(\{ \text{rec.sport.baseball, rec.sport.hockey} \}\)
\(\{ \text{talk.politics.misc} \}\)
\end{tabular} & 2 \\
\midrule
\multirow{2}{*}{\texttt{NG-3A}} & \multirow{2}{*}{1575} & \multirow{2}{*}{1593} & \multirow{2}{*}{3168} &
\begin{tabular}[t]{@{}l@{}}
\( \{ \text{talk.politics.guns} \}\) \( \{\text{talk.politics.mideast} \}\) \\ \( \{ \text{talk.politics.misc} \}\)
\end{tabular} & \multirow{2}{*}{3} \\
\midrule
\multirow{2}{*}{\texttt{NG-4B}} & \multirow{2}{*}{2321} & \multirow{2}{*}{1448} & \multirow{2}{*}{3769} &
\begin{tabular}[t]{@{}l@{}}
\( \{ \text{rec.sport.hockey} \}\) \( \{ \text{talk.politics.guns} \}\) \\ \( \{ \text{sci.electronics} \}\) \( \{ \text{comp.graphics} \}\)
\end{tabular} & \multirow{2}{*}{4} \\
\midrule
\texttt{NG-4A} & 2373 & 1693 & 4066 &
\begin{tabular}[t]{@{}l@{}}
\(\{ \text{sci.crypt} \}\) \(\{ \text{sci.electronics} \}\) \(\{ \text{sci.med} \}\) \(\{ \text{sci.space} \}\) \\
\end{tabular} & 4 \\
\midrule
\multirow{2}{*}{\texttt{NG-3B}} & \multirow{2}{*}{2911} & \multirow{2}{*}{1726} & \multirow{2}{*}{4637} &
\begin{tabular}[t]{@{}l@{}}
\(\{ \text{rec.sport.baseball, rec.sport.hockey} \}\) 
\( \{ \text{talk.politics.guns} \}\) \\
\( \{ \text{comp.sys.ibm.pc.hardware, comp.sys.mac.hardware} \}\)
\end{tabular} & \multirow{2}{*}{3} \\
\midrule
\multirow{2}{*}{\texttt{NG-3C}} & \multirow{2}{*}{2977} & \multirow{2}{*}{1932} & \multirow{2}{*}{4909} &
\begin{tabular}[t]{@{}l@{}}
\(\{ \text{soc.religion.christian} \}\)
\(\{ \text{rec.autos, rec.motorcycles} \}\) \\
\(\{ \text{sci.crypt, sci.electronics} \}\)
\end{tabular} & \multirow{2}{*}{3} \\
\midrule
\multirow{2}{*}{\texttt{NG-2C}} & \multirow{2}{*}{2772} & \multirow{2}{*}{2305} & \multirow{2}{*}{5077} &
\begin{tabular}[t]{@{}l@{}}
\(\{ \text{rec.sport.baseball, rec.sport.hockey} \}\) \\
\( \{ \text{talk.politics.guns, talk.politics.mideast, talk.politics.misc} \}\)
\end{tabular} & \multirow{2}{*}{2} \\
\bottomrule
\end{tabular}
\end{table}

To generate constraint sets, we randomly sample pairs of documents and impose must-link constraints when the documents share the same label and cannot-link constraints otherwise. Each constraint configuration is denoted as a pair $(|\textrm{ML}_U|, |\textrm{CL}_U|)$, indicating the number of must-link and cannot-link constraints applied to the document set $U$. We consider three configurations: $(0.5n, 0.5n)$, $(1.0n, 1.0n)$, and $(1.5n, 1.5n)$. All values are rounded to the nearest integer using half-up rounding. For each configuration, we generate three sets of pairwise constraints using different random seeds, producing multiple instances per dataset that differ in the amount of available background knowledge. In total, the evaluation involves 8 datasets, each tested under 3 constraint configurations and 3 random seeds per configuration, resulting in 72 constrained biclustering instances.

Due to the scale of the datasets, computing globally optimal solutions is computationally intractable. Consequently, the true optimal objective values are unknown, and direct comparisons based on objective quality are not possible. To evaluate clustering performance, we instead rely on machine learning validation metrics that measure agreement between the computed partition and the ground-truth document labels. In particular, we use the Adjusted Rand Index (ARI) \cite{hubert1985comparing} and the Normalized Mutual Information (NMI) \cite{vinh2009information}. The ARI ranges from $-0.5$ to $1$, with higher values indicating stronger agreement (1 denotes perfect matching, values near 0 or below indicate poor or random clustering). The NMI ranges from 0 to 1, quantifying the mutual dependence between the predicted and ground-truth partitions, where 1 indicates perfect correlation.

\begin{table}[!ht]
\footnotesize
\caption{Results of {\tt CBICL-LR} on real-world document clustering datasets under varying constraint configurations (Constr.). Results averaged over the three sets of constraints for each configuration. We report some measures related to a single execution of {\tt CBICL-LR} that are the average number of outer augmented Lagrangian iterations (Iter$_1$), the computational time (Time$_1$) and the time spent for solving ILPs within the rounding scheme (ILP$_1$). Additionally, we report cumulative time for all the 5 runs of {\tt CBICL-LR} and the ARI and NMI scores corresponding to the best solution obtained across these runs.}
\label{table:ng_results}
    \centering
    \begin{tabular}{llrrrrrr}
    \toprule
& & \multicolumn{6}{c}{Avg.} \\
Dataset &  Constr. & $\textrm{Iter}_1$ & $\textrm{Time}_1$ (s) & $\textrm{ILP}_1$ (s) & $\textrm{Time}$ (s) & ARI & NMI\\
\midrule
\texttt{NG-2A}   &   (0, 0)      &   18.40        &     15.23      &      $-$     &      76.15     &    0.293       &      0.274    \\
\texttt{NG-2A}	&	(599, 599)	&	16.47	&	5.61	&	0.02	&	30.08	&	0.623	&	0.551	\\
\texttt{NG-2A}	&	(1197, 1197)	&	47.53	&	6.20	&	0.04	&	33.23	&	0.969	&	0.941	\\
\texttt{NG-2A}	&	(1796, 1796)	&	32.40	&	2.68	&	0.05	&	14.36	&	0.994	&	0.986	\\
\midrule
\texttt{NG-2B}     &   (0, 0)      &   20.60 &	55.86 &	$-$ & 279.39  &  0.323       &   0.291   \\
\texttt{NG-2B}	&	(831, 831)	&	33.62	&	18.82	&	0.03	&	100.81	&	0.713	&	0.638	\\
\texttt{NG-2B}	&	(1662, 1662)	&	73.47	&	22.83	&	0.06	&	122.28	&	0.987	&	0.971	\\
\texttt{NG-2B}	&	(2493, 2493)	&	46.27	&	11.00	&	0.09	&	58.92	&	0.999	&	0.997	\\
\midrule
\texttt{NG-3A}    &   (0, 0)      &    17.00       &      82.36     &    $-$       &    411.88       &   0.165   &	0.180  \\
\texttt{NG-3A}	&	(788, 788)	&	17.27	&	20.55	&	0.10	&	110.11	&	0.668	&	0.634	\\
\texttt{NG-3A}	&	(1575, 1575)	&	42.85	&	23.32	&	0.12	&	124.90	&	0.864	&	0.821	\\
\texttt{NG-3A}	&	(2363, 2363)	&	37.68	&	17.68	&	0.17	&	94.74	&	0.961	&	0.933	\\
\midrule
\texttt{NG-4A}    &   (0, 0)      &     17.20      &    198.86       &      $-$     &          994.70   &  0.105   &	0.157  \\
\texttt{NG-4A}	&	(1187, 1187)	&	14.27	&	30.14	&	0.30	&	161.46	&	0.492	&	0.508	\\
\texttt{NG-4A}	&	(2373, 2373)	&	36.40	&	27.68	&	0.41	&	148.27	&	0.755	&	0.735	\\
\texttt{NG-4A}	&	(3560, 3560)	&	26.67	&	21.71	&	0.66	&	116.30	&	0.946	&	0.921	\\
\midrule
\texttt{NG-3B}    &   (0, 0)      &     15.20      &     189.03      &      $-$     &   945.15        &       0.058   &	0.101 \\
\texttt{NG-3B}	&	(1456, 1456)	&	26.81	&	53.45	&	0.21	&	286.33	&	0.677	&	0.612	\\
\texttt{NG-3B}	&	(2911, 2911)	&	85.62	&	65.95	&	0.33	&	353.32	&	0.840	&	0.775	\\
\texttt{NG-3B}	&	(4367, 4367)	&	56.73	&	32.12	&	0.58	&	172.04	&	0.957	&	0.925	\\
\midrule
\texttt{NG-3C}    &   (0, 0)      &     22.00      &     274.04      &      $-$     &    1370.24       &       0.072   &	0.108 \\
\texttt{NG-3C}	&	(1489, 1489)	&	28.13	&	84.49	&	0.24	&	452.63	&	0.594	&	0.568	\\
\texttt{NG-3C}	&	(2977, 2977)	&	88.29	&	74.73	&	0.36	&	400.37	&	0.840	&	0.781	\\
\texttt{NG-3C}	&	(4466, 4466)	&	65.47	&	42.52	&	0.61	&	227.79	&	0.960	&	0.931	\\
\midrule
\texttt{NG-4B}    &   (0, 0)      &     10.60      &    102.08       &       $-$    &     510.41      &       0.058   &	0.114 \\
\texttt{NG-4B}	&	(1161, 1161)	&	15.84	&	26.36	&	0.24	&	141.20	&	0.517	&	0.532	\\
\texttt{NG-4B}	&	(2321, 2321)	&	42.83	&	24.34	&	0.40	&	130.40	&	0.770	&	0.749	\\
\texttt{NG-4B}	&	(3482, 3482)	&	30.93	&	15.30	&	0.65	&	81.96	&	0.936	&	0.911	\\
\midrule
\texttt{NG-2C}    &   (0, 0)      &    20.00       &    202.11       &     $-$     &     1010.55      &    0.421       &     0.418     \\ 
\texttt{NG-2C}	&	(1386, 1386)	&	41.21	&	116.12	&	0.07	&	622.04	&	0.682	&	0.602	\\
\texttt{NG-2C}	&	(2772, 2772)	&	99.80	&	114.23	&	0.14	&	611.97	&	0.967	&	0.939	\\
\texttt{NG-2C}	&	(4158, 4158)	&	62.47	&	61.67	&	0.22	&	330.38	&	0.997	&	0.992	\\
\bottomrule
\end{tabular}
\end{table}

To avoid local solutions of bad quality, {\tt CBICL-LR} is executed in a multi-start fashion with 5 runs using different random initializations.
Computational results are shown in Table \ref{table:ng_results}, where the reported metrics are averaged over the three constraint sets for a given configuration. Specifically, we report the average number of outer iterations of the ALM in a single run (Iter$_1$), the average computation time for one run (Time$_1$), and the time spent solving ILPs during the rounding phase of that run (ILP$_1$). Additionally, we report the total time over all 5 runs and the ARI and NMI scores corresponding to the best biclustering solution found in the multi-start procedure. For each dataset, the first row corresponds to the unconstrained setting, i.e., no background knowledge is provided (Constr. (0, 0)). In these cases, the rounding step of {\tt CBICL-LR} does not involve solving any ILPs, denoted by a “$-$” symbol in the ILP$_1$ column. In this case, {\tt CBICL-LR} operates purely in an unsupervised mode.

We observe that solutions in the unconstrained setting show poor agreement with the ground-truth labels. Notably, these runs are generally more computationally demanding. This is expected: as previously discussed, the inclusion of must-link constraints reduces the effective problem size. Furthermore, as the number of pairwise constraints increases, the clustering quality improves substantially across all datasets. For instance, under the largest constraint configuration, ARI values exceed 0.95 for several datasets, including \texttt{NG-2B}, \texttt{NG-3B}, and \texttt{NG-2C}, indicating near-perfect agreement with the ground-truth partition. A similar trend is observed in the NMI scores, which consistently increase with the amount of supervision. This behavior is coherent with established guidelines in the constrained clustering literature \cite{davidson2007survey, basu2008constrained}. In fact, the quality of clustering solutions is expected to scale with the amount of constraint-based information. Hence, observing improved accuracy with increasing supervision provides evidence that {\tt CBICL-LR} is effectively leveraging the constraints and that the observed improvements are not due to random effects. {Figure~\ref{fig:ari} in Appendix \ref{appendix:figures} presents bar charts of the ARI and NMI scores under the considered constraint configurations.}

In terms of computational performance, {\tt CBICL-LR} shows strong scalability. While larger instances such as \texttt{NG-3C} or \texttt{NG-2C} require more time overall, the cost remains reasonable, and the time spent solving ILPs during rounding is consistently negligible—typically under one second per run. Overall, the results confirm that {\tt CBICL-LR} can efficiently handle large-scale, constrained biclustering tasks while delivering high-quality solutions.

\section{Conclusions}\label{section:conclusions}
In this work, we addressed the constrained biclustering problem with pairwise must-link and cannot-link constraints through the constrained $k$-densest disjoint biclique ($k$-DDB) problem. We proposed the first exact algorithm for this problem class, together with an efficient heuristic for large-scale instances. The exact method, based on a branch-and-cut framework, features a low-dimensional SDP relaxation, incorporates valid inequalities, and a specialized rounding procedure to produce high-quality feasible solutions at each node. This approach significantly extends the range of problem sizes that can be solved to global optimality compared to general-purpose solvers. For larger instances, we developed a scalable heuristic relying on a low-rank factorization of the SDP relaxation, solved via an augmented Lagrangian method combined with a block-coordinate projected gradient algorithm. Computational experiments on synthetic graphs and real-world datasets from gene expression and text mining show that the exact algorithm consistently outperforms standard solvers, while the heuristic delivers high-quality solutions at a fraction of the computational cost. This work demonstrates how mathematical optimization can effectively tackle complex machine learning problems, advancing the state-of-the-art. The heuristic, grounded in optimization-based modeling, was also evaluated through a machine learning perspective using clustering performance metrics, reinforcing the value of cross-fertilization between these fields. Future developments could focus on extending the methodology to accommodate richer forms of side information, such as group-level constraints, soft constraints, or hierarchical relations, to further enhance interpretability and flexibility. Another promising direction is the design of a tailored solver that fully exploits the low-rank structure of the relaxation, enabling the solution of even larger and more challenging datasets. Moreover, while the proposed SDP-based framework provides strong bounds, recent evidence suggests that the LP relaxation of the $k$-means problem based on triangle inequalities can also yield tight bounds when efficiently implemented, that is, when the large number of constraints is handled through specialized first-order algorithms and parallel computation. In particular, the approach proposed by De Rosa et al. \cite{de2024power}, which leverages first-order LP solvers and GPU acceleration to handle large-scale problems, is a promising direction for future extensions of the present work.

\section*{Acknowledgment}
I would like to thank Prof. Veronica Piccialli for the insightful discussions and valuable feedback.

\section*{Declarations}

\subsection*{Competing Interests}
The author has no relevant financial or non-financial interests to disclose.
\subsection*{Data Availability Statement}
The source code and data supporting the findings of this study are publicly available at \url{https://github.com/antoniosudoso/cbicl}.

\begin{appendices}

\section{Proof of Proposition \ref{proposition:equivalence_shr}}\label{appendix:equivalence}
\begin{proof}
    We first show that any feasible solution $(\bar{Y}_U, \bar{Y}_V)$ to Problem \eqref{prob:or_constr_shr} can be transformed into a feasible solution for Problem \eqref{prob:or_constr} with the same objective function value. We define this equivalent solution as $Y_U = T_U^\top \bar{Y}_U$ and $Y_V = T_V^\top \bar{Y}_V$. By computing these products, one can easily verify that $T_U$ and $T_V$ have the effect of expanding $\bar{Y}_U$ and $\bar{Y}_V$ to $n \times k$ and $m \times k$ matrices by replicating their rows according to the must-link constraints between vertices in $U$ and $V$, respectively. Thus, constraints \eqref{constr:ml_U} and \eqref{constr:ml_V} hold by construction. Moreover, $Y_U \geq 0$ and $Y_V \geq 0$ hold by construction as well. Next, we have that 
\begin{align*}
    Y_U^\top Y_U &= \bar{Y}_U^\top T_U T_U^\top \bar{Y}_U = I_k, \quad Y_U Y_U^\top 1_n = T_U^\top \bar{Y}_U \bar{Y}_U^\top T_U 1_n = T_U^\top 1_{\bar{n}} = 1_n,\\
    Y_V^\top Y_V &= \bar{Y}_V^\top T_V T_V^\top \bar{Y}_V = I_k, \quad Y_V Y_V^\top 1_m = T_V^\top \bar{Y}_V \bar{Y}_V^\top T_V 1_m = T_V^\top 1_{\bar{m}} = 1_m,
\end{align*}
and the objective function value is $\tr(Y_U^\top A Y_V) = \tr(\bar{Y}_U^\top T_U A T_V^\top \bar{Y}_V)$. Constraints \eqref{constr:cl_U} and \eqref{constr:cl_V} follow from the properties of cannot-link constraints. That is, if $(\mathcal{U}_s, \mathcal{U}_t) \in \overline{\textrm{CL}}_U$, then $(u_i, u_j) \in \textrm{CL}_U$ for all $u_i \in \mathcal{U}_s$, $u_j \in \mathcal{U}_t$. Similarly, if $(\mathcal{V}_s, \mathcal{V}_t) \in \overline{\textrm{CL}}_V$, then $(v_i, v_j) \in \textrm{CL}_V$ for all $v_i \in \mathcal{V}_s$, $v_j \in \mathcal{V}_t$.
 
It remains to show that for any feasible solution $(Y_U, Y_V)$ of Problem \eqref{prob:or_constr} {it} is possible to construct a feasible solution for \eqref{prob:or_constr_shr} with the same objective function value. To the end, let $C_U = T_U T_U^\top = \Diag(e_U)$ and $C_V = T_V T_V^\top = \Diag(e_V)$. Now, assuming that $(Y_U, Y_V)$ is a feasible solution for Problem \eqref{prob:or_constr}, we define matrices $\bar{Y}_U = C_U^{-1} T_U Y_U$ and $\bar{Y}_V = C_V^{-1} T_V Y_V$. From the structure of $Y_U$ and $Y_V$ it easy to verify that $Y_U = T_U^\top C_U^{-1}T_U Y_U$ and $Y_V = T_V^\top C_V^{-1}T_V Y_V$. Moreover, constraints \eqref{constr:cl_U_shr} and \eqref{constr:cl_V_shr} hold by construction. Then, we have
\begin{align*}
    \bar{Y}_U^\top T_U T_U^\top \bar{Y}_U & =  Y_U^\top T_U^\top C_U^{-1}C_U C_U^{-1} T_U Y_U = Y_U^\top T_U^\top C_U^{-1} T_U Y_U  = Y_U^\top Y_U = I_k,\\
    \bar{Y}_U \bar{Y}_U^\top T_U 1_n & =  C_U^{-1} T_U Y_U Y_U^\top T_U^\top C_U^{-1} T_U 1_n = C_U^{-1} T_U Y_U Y_U^\top T_U^\top 1_{\bar{n}} = C_U^{-1} T_U 1_n = 1_{\bar{n}},\\
    \bar{Y}_V^\top T_V T_V^\top \bar{Y}_V & =  Y_V^\top T_V^\top C_V^{-1}C_V C_V^{-1} T_V Y_V = Y_V^\top T_V^\top C_V^{-1} T_V Y_V  = Y_V^\top Y_V = I_k,\\
    \bar{Y}_V \bar{Y}_V^\top T_V 1_m & =  C_V^{-1} T_V Y_V Y_V^\top T_V^\top C_V^{-1} T_V 1_m = C_U^{-1} T_V Y_V Y_V^\top T_V^\top 1_{\bar{m}} = C_V^{-1} T_V 1_m = 1_{\bar{m}}.
\end{align*}
Finally, we have to verify that both solutions have the same objective function value. In fact, we have $\tr(\bar{Y}_U^\top T_U A T_V^\top \bar{Y}_V) = \tr(Y_U^\top T_U^\top C_U^{-1} T_U A T_V^\top C_V^{-1} T_V Y_V) = \tr(Y_U^\top A Y_V)$ and this concludes the proof.
\end{proof}

\section{Proof of Proposition \ref{theorem:safe_bound}}\label{appendix:safe}

The following lemma by \cite{jansson2008rigorous} is needed for proving the validity of the upper bound provided by Proposition \ref{theorem:safe_bound}.
\begin{lemma}\label{lem:jansson}
Let ${S}, {X} \in \mathbb{S}^n$ be matrices that satisfy
$\lambda_{\min}({X}) \geq 0$ and $\lambda_{\max}({X}) \leq \bar{x}$ for some $\bar{x} \in \mathbb{R}$.
Then the following inequality holds:
\begin{equation*}
    \inprod{{S}}{{X}} \geq \bar{x}\sum_{i \colon  \lambda_i({S}) <0}\lambda_i({S}).
\end{equation*}
\end{lemma}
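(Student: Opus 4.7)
The plan is to prove this inequality via a spectral decomposition of $S$ combined with the bracketing of the quadratic forms $v_i^\top X v_i$ induced by the hypotheses on the spectrum of $X$. Write $S = \sum_{i=1}^{n} \lambda_i(S)\, v_i v_i^\top$, where $\{v_i\}_{i=1}^n$ is an orthonormal system of eigenvectors of $S$. Since the Frobenius inner product is invariant under this expansion, we have
\begin{equation*}
\inprod{S}{X} = \tr(SX) = \sum_{i=1}^n \lambda_i(S)\, v_i^\top X v_i.
\end{equation*}

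The next step is to bound each scalar $v_i^\top X v_i$ using the assumptions on $X$. The condition $\lambda_{\min}(X) \geq 0$ means $X \succeq 0$, hence $v_i^\top X v_i \geq 0$ for every $i$; the condition $\lambda_{\max}(X) \leq \bar{x}$ implies $v_i^\top X v_i \leq \bar{x}\,\|v_i\|^2 = \bar{x}$ for every $i$ (and in particular forces $\bar{x} \geq 0$, since $\bar{x} \geq \lambda_{\max}(X) \geq \lambda_{\min}(X) \geq 0$). I would then split the sum according to the sign of $\lambda_i(S)$: for indices with $\lambda_i(S) \geq 0$, the contribution $\lambda_i(S)\, v_i^\top X v_i$ is nonnegative and can be discarded when lower-bounding; for indices with $\lambda_i(S) < 0$, multiplying the inequality $v_i^\top X v_i \leq \bar{x}$ by the negative quantity $\lambda_i(S)$ reverses it, giving $\lambda_i(S)\, v_i^\top X v_i \geq \bar{x}\,\lambda_i(S)$.

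Combining the two estimates yields
\begin{equation*}
\inprod{S}{X} \;\geq\; \sum_{i \colon \lambda_i(S) < 0} \lambda_i(S)\, v_i^\top X v_i \;\geq\; \bar{x} \sum_{i \colon \lambda_i(S) < 0} \lambda_i(S),
\end{equation*}
which is exactly the claimed inequality. I do not foresee a real obstacle here: the only subtlety is remembering that the inequality on $v_i^\top X v_i$ must be flipped when multiplied by a negative eigenvalue of $S$, and that nonnegativity of $\bar{x}$ (implicitly guaranteed by the hypotheses) is what makes the final bound meaningful. The argument is essentially a one-line spectral estimate once the decomposition of $S$ is in place.
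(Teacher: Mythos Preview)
Your argument is correct: the spectral decomposition of $S$ together with the Rayleigh-quotient bounds $0 \leq v_i^\top X v_i \leq \bar{x}$ yields the inequality exactly as you describe. The paper does not actually prove this lemma; it merely cites it from Jansson et al.\ (2008), and your proof is precisely the standard argument behind that result.
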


\begin{lemma}\label{theorem:eigboundZ}
    Let $Z$ be a feasible solution of Problem \eqref{prob:sdp_shr}. Set $d_U = \min_{j \in \{1, \dots, \bar{n}\}} (e_U)_j$ and $d_V = \min_{j \in \{1, \dots, \bar{m}\}} (e_V)_j$, then $\lambda_{\max}(Z) \leq  \frac{1}{d_U} + \frac{1}{d_V}$ holds.
\end{lemma}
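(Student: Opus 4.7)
The plan is to split the bound into two ingredients: a separate spectral bound on each of the diagonal blocks $Z_{UU}$ and $Z_{VV}$, and a rule that combines these into a bound on $\lambda_{\max}(Z)$ using the positive semidefinite structure of the full block matrix.

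First, I would bound $\lambda_{\max}(Z_{UU}) \leq 1/d_U$. Using $Z_{UU} e_U = 1_{\bar n}$ together with $Z_{UU} \geq 0$ entrywise and $(e_U)_j \geq d_U$ for every $j$, I get, for each row $i$,
$$
1 \;=\; \sum_j (Z_{UU})_{ij}\,(e_U)_j \;\geq\; d_U \sum_j (Z_{UU})_{ij},
$$
so all row sums of $Z_{UU}$ are at most $1/d_U$. Because $Z_{UU}$ is simultaneously entrywise nonnegative and positive semidefinite, its largest eigenvalue equals its spectral radius, and the spectral radius of a nonnegative matrix is bounded by its maximum row sum (Perron--Frobenius, equivalently $\rho(A) \leq \|A\|_\infty$). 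Hence $\lambda_{\max}(Z_{UU}) \leq 1/d_U$. The symmetric argument for $Z_{VV}$ gives $\lambda_{\max}(Z_{VV}) \leq 1/d_V$.

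Second, I would combine these into a bound on $\lambda_{\max}(Z)$ using $Z \succeq 0$. Factoring $Z = M^\top M$ and partitioning $M = [M_U\ M_V]$ conformably with $Z$, for any unit vector $x = [x_U;\, x_V]$ the triangle inequality yields
$$
x^\top Z x \;=\; \|M_U x_U + M_V x_V\|^2 \;\leq\; \bigl(\|M_U x_U\| + \|M_V x_V\|\bigr)^2.
$$
Setting $\alpha = \|x_U\|$, $\beta = \|x_V\|$ (so that $\alpha^2+\beta^2 = 1$) and using $\|M_U x_U\|^2 = x_U^\top Z_{UU} x_U \leq \alpha^2 \lambda_{\max}(Z_{UU})$ and the analogous estimate for the $V$--block, a single application of Cauchy--Schwarz on $(\alpha,\beta)$ and $\bigl(\sqrt{\lambda_{\max}(Z_{UU})},\sqrt{\lambda_{\max}(Z_{VV})}\bigr)$ gives
$$
\lambda_{\max}(Z) \;\leq\; \lambda_{\max}(Z_{UU}) + \lambda_{\max}(Z_{VV}) \;\leq\; \frac{1}{d_U} + \frac{1}{d_V},
$$
which is the desired inequality.

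The main obstacle is precisely the combination step. A naive bound such as $\lambda_{\max}(Z) \leq \tr(Z)$ would combine with the trace-type constraints $\inprod{\Diag(e_U)}{Z_{UU}} = k$ and $\inprod{\Diag(e_V)}{Z_{VV}} = k$ to yield only the weaker estimate $k/d_U + k/d_V$, which grows linearly in $k$ and is not sharp enough. Avoiding this loss requires exploiting the PSD factorization to handle the off-diagonal block $Z_{UV}$ via the triangle inequality, so that only the spectral norms of $Z_{UU}$ and $Z_{VV}$ enter. The secondary care point is that in bounding $\lambda_{\max}(Z_{UU})$ one must use both its PSD and its entrywise nonnegative structure, so that $\lambda_{\max}$ equals $\rho$ and the row sum bound applies.
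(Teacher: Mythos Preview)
Your proof is correct and follows essentially the same route as the paper: bound each diagonal block via the row-sum/Perron--Frobenius argument using $Z_{UU}e_U=1_{\bar n}$ and $(e_U)_j\ge d_U$, then combine via the PSD block inequality $\lambda_{\max}(Z)\le\lambda_{\max}(Z_{UU})+\lambda_{\max}(Z_{VV})$. The only difference is that the paper merely asserts this block inequality, whereas you supply a clean proof of it via the factorization $Z=M^\top M$, triangle inequality, and Cauchy--Schwarz.
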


\begin{proof}
Since $Z$ is a positive semidefinite block matrix, then its largest eigenvalue is less than or equal to sum of the largest eigenvalues of its diagonal blocks, that is $$\lambda_{\max}(Z) = \max_{\|x\|=1} x^\top Z x \leq \lambda_{\max}(Z_{UU}) + \lambda_{\max}(Z_{VV}).$$
From the Perron-Frobenius theory applied to nonnegative matrices, it follows that the largest eigenvalue of $Z$ is bounded above by its largest row sum. Recall that \( (e_U)_j = (T_U 1_n)_j  \geq 1\) and \( (e_V)_j = (T_V 1_m)_j \geq 1\). Since \( Z_{ij} \geq 0 \), \( (e_U)_j \geq (d_{U})_j \), and \( (e_V)_j \geq (d_{V})_j \) we have that
\begin{align*}
    (e_U)_j (Z_{UU})_{ij} &\geq d_{U} (Z_{UU})_{ij} \quad \forall j \in \{1, \dots, \bar{n}\}, \\
    (e_V)_j (Z_{UU})_{ij} &\geq d_{V} (Z_{VV})_{ij} \quad \forall j \in \{1, \dots, \bar{m}\}.
\end{align*}
Summing over \( j \), and using $Z_{UU}e_U = 1_{\bar{n}}$, $Z_{VV}e_V = 1_{\bar{m}}$ we get
\begin{align*}
\sum_{j=1}^{\bar{n}} (e_U)_j (Z_{UU})_{ij} &\geq d_U \sum_{j=1}^{\bar{n}} (Z_{UU})_{ij} \implies \sum_{j=1}^{\bar{n}} (Z_{UU})_{ij} \leq \frac{1}{d_{U}} \quad \forall i \in \{1, \dots, \bar{n}\},\\
\sum_{j=1}^{\bar{m}} (e_V)_j (Z_{VV})_{ij} &\geq d_V \sum_{j=1}^{\bar{m}} (Z_{VV})_{ij} \implies \sum_{j=1}^{\bar{m}} (Z_{VV})_{ij} \leq \frac{1}{d_{V}} \quad \forall i \in \{1, \dots, \bar{m}\}.
\end{align*}
Then, we have
\begin{align*}
    \max_{i \in \{1,\dots,\bar{n}\}} \sum_{j=1}^{\bar{n}} (Z_{UU})_{ij} = \frac{1}{d_{U}} \quad \text{and} \quad
    \max_{i \in \{1,\dots,\bar{m}\}} \sum_{j=1}^{\bar{m}} (Z_{VV})_{ij} = \frac{1}{d_{V}}.
\end{align*}
which directly imply that $\lambda_{\max}(Z_{UU}) \leq \frac{1}{d_U}$ and $\lambda_{\max}(Z_{VV}) \leq \frac{1}{d_V}$. Therefore, we get
\begin{align*}
    \lambda_{\max}(Z) \leq \frac{1}{d_U} + \frac{1}{d_V}.
\end{align*}
\end{proof}
\noindent
We are now ready to prove Proposition \ref{theorem:safe_bound}.

\begin{proof}
Let ${Z^\star}$ be optimal for the primal SDP \eqref{prob:primal} and let $d = {y_U^\top} 1_{\bar{n}} + y_V^\top 1_{\bar{m}} + k(\alpha_U + \alpha_V) + {t_U^\top} {0_p} + t_V^\top 0_q$. Then
{\small
\begin{align*}
   \inprod{T_U A T_V^\top}{Z_{UV}^\star} - d &= 
   \frac{1}2\inprod{T_U A T_V^\top}{Z_{UV}^\star} + \frac{1}2\inprod{T_V A^\top T_U}{(Z_{UV}^\star)^\top} \\ & \quad - \inprod{\mathcal{A}_U^\top({\lambda_U}) + \mathcal{B}_U^\top({t_U})}{{Z^\star_{UU}}} - \inprod{\mathcal{A}_V^\top({\lambda_V}) + \mathcal{B}_V^\top({t_V})}{{Z^\star_{VV}}}\\
    &=-\frac{1}{2}\inprod{Q_{UV} + \tilde{S}_{UV}}{Z_{UV}^\star} -\frac{1}{2}\inprod{Q_{UV}^\top + \tilde{S}_{UV}^\top}{(Z_{UV}^\star)^\top} \\ & \quad- \inprod{Q_{UU} + \tilde{S}_{UU}}{{Z^\star_{UU}}} - \inprod{Q_{VV} + \tilde{S}_{VV}}{{Z^\star_{VV}}}\\
   & = - \inprod{{Q} + {\tilde{S}}}{{Z^\star}}\\
   & = - \inprod{{Q}}{{Z^\star}} - \inprod{{\tilde{S}}}{{Z^\star}} \\ 
   & \leq - d_{\min}\sum_{i \colon  \lambda_i({\tilde{S}}) <0}\lambda_i({\tilde{S}}),
\end{align*}}%
where the last inequality holds since ${Q}$ is nonnegative and thanks to Lemma \ref{lem:jansson} where Lemma \ref{theorem:eigboundZ} with $\bar{x} =  \frac{1}{d_U} + \frac{1}{d_V}$ is used.
\end{proof}

\section{Proof of Proposition \ref{theorem:sequential}}\label{appendix:alm_conv_proof}
\begin{proof}
Since $(\bar{Y}_U, \bar{Y}_V)$ is a limit point of the sequence $\{(Z_U^k, Z_V^k)\}$, there exists an infinite subset $K \subseteq \{0, 1, \dots\}$ such that $\lim_{k \in K} (Z_U^k, Z_V^k) = (\bar{Z}_U, \bar{Z}_V)$.
At each iteration, solving the augmented Lagrangian suproblem yields a point $(Z_U^k, Z_V^k)$ such that \eqref{eq:norm_sub_U} and \eqref{eq:norm_sub_V} hold.
Taking limits we get
\begin{align*}
      \lim_{k \in K}\|\Pi_{\Omega_U}(Z_U^k-\nabla_{Z_U}L_{\beta_k}
                         (Z_U^k, Z^k_V, {\lambda}_U^k, {\lambda}_V^k)) - Z_U^k \| & = 0,\\
\lim_{k \in K}\|\Pi_{\Omega_V}(Z_V^k-\nabla_{Z_V}L_{\beta_k}
                         (Z_U^k, Z^k_V, {\lambda}_U^k, {\lambda}_V^k)) - Z_V^k \| & = 0,
\end{align*}
where the gradients are given by
\begin{align*}
    \nabla_{Z_U}L_{\beta_k}
                         (Z_U^k, Z^k_V, {\lambda}_U^k, {\lambda}_V^k) &= 2\, \mathcal{A}_U^\top(\lambda_U^k + \beta(\mathcal{A}_U(Z_U^k (Z_U^k)^\top) - b_U)) Z_U^k -T_U A T_V^\top Z_V^k, \\
    \nabla_{Z_V}L_{\beta_k}
                         (Z_U^k, Z^k_V, {\lambda}_U^k, {\lambda}_V^k) &= 2\, \mathcal{A}_V^\top(\lambda_V^k + \beta_k(\mathcal{A}_V(Z_V^k (Z_V^k)^\top) - b_V)) Z_V^k -T_V A^\top T_U^\top Z_U^k.
\end{align*}
Next, using the definitions of $\lambda_U^{k+1}$ and $\lambda_V^{k+1}$
we get 
\begin{align}
\label{eq:norm_sub_U_proof_substitution}
      \lim_{k \in K}\| \Pi_{\Omega_U}(Z_U^k - (2\mathcal{A}_U^\top(\lambda_U^{k+1}) Z_U^k - T_U A T_V^\top Z_V^k)) - Z_U^k \| & = 0, \\
\label{eq:norm_sub_V_proof_substitution}
      \lim_{k \in K}\|\Pi_{\Omega_V}(Z_V^k - (2\mathcal{A}_V^\top(\lambda_V^{k+1}) Z_V^k - T_V A^\top T_U^\top Z_U^k)) - Z_V^k \| & = 0 .
\end{align}
Since $(\bar{Z}_U, \bar{Z}_V)$ is feasible, by the continuity of operators $\mathcal{A}_U$ and $\mathcal{A}_V$ we have
\begin{align}
\label{eq:feas_1}
    \lim_{k \in K} \|\mathcal{A}_U(Z_U^k (Z_U^k)^\top) - b_U \| &= 0, \\
\label{eq:feas_2}
    \lim_{k \in K} \|\mathcal{A}_V(Z_V^k (Z_V^k)^\top) - b_V \| &= 0.
\end{align}
By \eqref{eq:norm_sub_U_proof_substitution}-\eqref{eq:feas_2}, the sequence $\{(Z_U^k, Z_V^k)\}$ satisfies the AKKT condition for Problem \eqref{prob:bm_standard}.


\end{proof}

\section{Proof of Proposition \ref{proposition:alm_sub_conv}}\label{appendix:alm_sub_conv_proof}
\begin{proof}
The feasible set $\Omega_U \times \Omega_V$ is compact, and the iterates $(Z_U^t, Z_V^t)$ are contained in this set for all $t$. Hence, the sequence $\{(Z_U^t, Z_V^t)\}$ is bounded and admits at least one limit point; i.e., there exists an infinite subset $T \subseteq \mathbb{N}$ such that
\[
\lim_{t \in T} (Z_U^t, Z_V^t) = (\bar{Z}_U, \bar{Z}_V).
\]
At each outer iteration, the algorithm performs two block updates using projected gradient descent with Armijo line search. Each inner loop ensures non-increasing objective values. In particular,
\[
\bar{L}(Z_U^{t+1}, Z_V^t) \leq \bar{L}(Z_U^t, Z_V^t), \quad \text{and} \quad \bar{L}(Z_U^{t+1}, Z_V^{t+1}) \leq \bar{L}(Z_U^{t+1}, Z_V^t),
\]
so we conclude that
\[
\bar{L}(Z_U^{t+1}, Z_V^{t+1}) \leq \bar{L}(Z_U^t, Z_V^t).
\]
Thus, the sequence $\{\bar{L}(Z_U^t, Z_V^t)\}$ is monotonically decreasing and bounded below (since $\bar{L}$ is bounded below over the compact feasible set). Hence, the sequence $\{\bar{L}(Z_U^t, Z_V^t)\}$ converges.

We now invoke the convergence framework in \cite{cassioli2013convergence, galli2020unified}. The sequence $\{(Z_U^t, Z_V^t)\}$ satisfies Assumptions 1–4 from that work.
In particular, each block is optimized over the fixed convex feasible sets $\Omega_U$ and $\Omega_V$ (Assumption 1); the algorithm alternates cyclically between the blocks $Z_U$ and $Z_V$, with each block being updated every outer iteration (Assumption 2); the Armijo backtracking line search ensures sufficient decrease at every inner iteration. (Assumption 3), and the projected gradient direction satisfies first-order optimality conditions at the limit (Assumption 4). These last two assumptions ensure that the chosen combination of line search and direction forces the directional derivative to approach zero and guarantees that the distance between two consecutive iterates also vanishes in the limit. Therefore, by Proposition 1 from \cite{cassioli2013convergence}, every limit point $(\bar{Z}_U, \bar{Z}_V)$ of the sequence is an approximate stationary point of Problem \eqref{prob:alm_subproblem}. 
\end{proof}

\newpage

\section{Supplementary Figures}\label{appendix:figures}

\begin{figure}[!ht]
        \centering
        \includegraphics[scale=0.62]{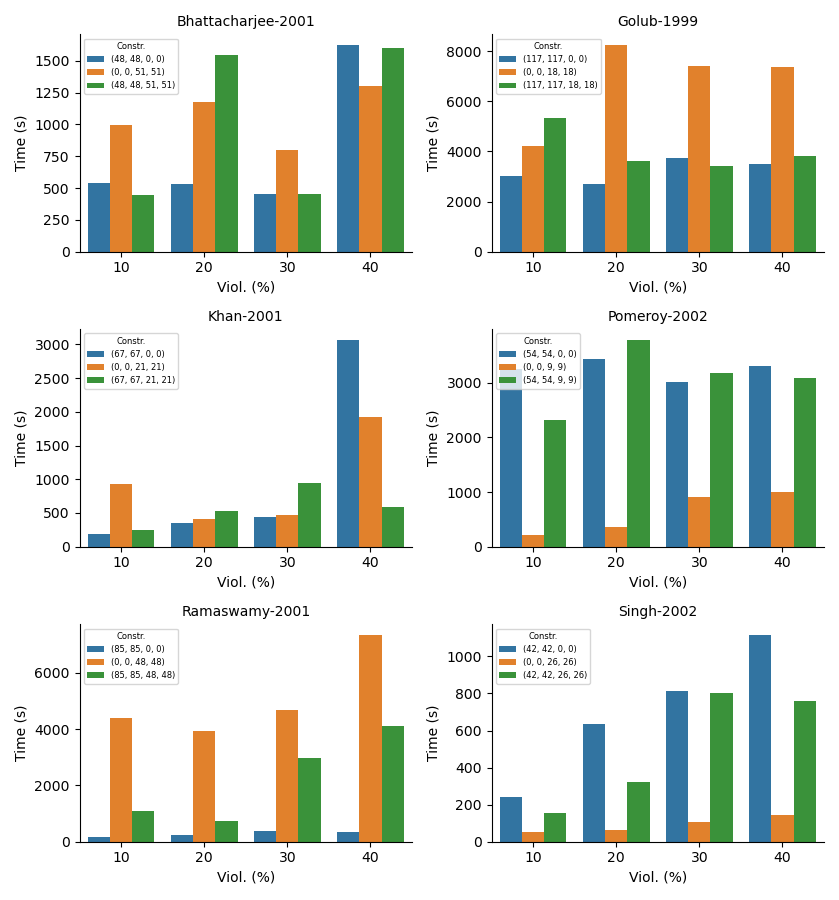}
        \caption{Total solution times of {\tt CBICL-BB} on real-world gene expression datasets under constraint configurations (Constr.) $(n/4, n/4, 0, 0)$, $(0, 0, m/4, m/4)$, $(0, 0, m/4, m/4)$, $(n/4, n/4, m/4, m/4)$ and violation percentages (Viol.). Each bar represents the average over three sets of constraints corresponding to the same configuration.}
        \label{fig:n4}
    \end{figure}
    \begin{figure}[!ht]
        \centering
        \includegraphics[scale=0.62]{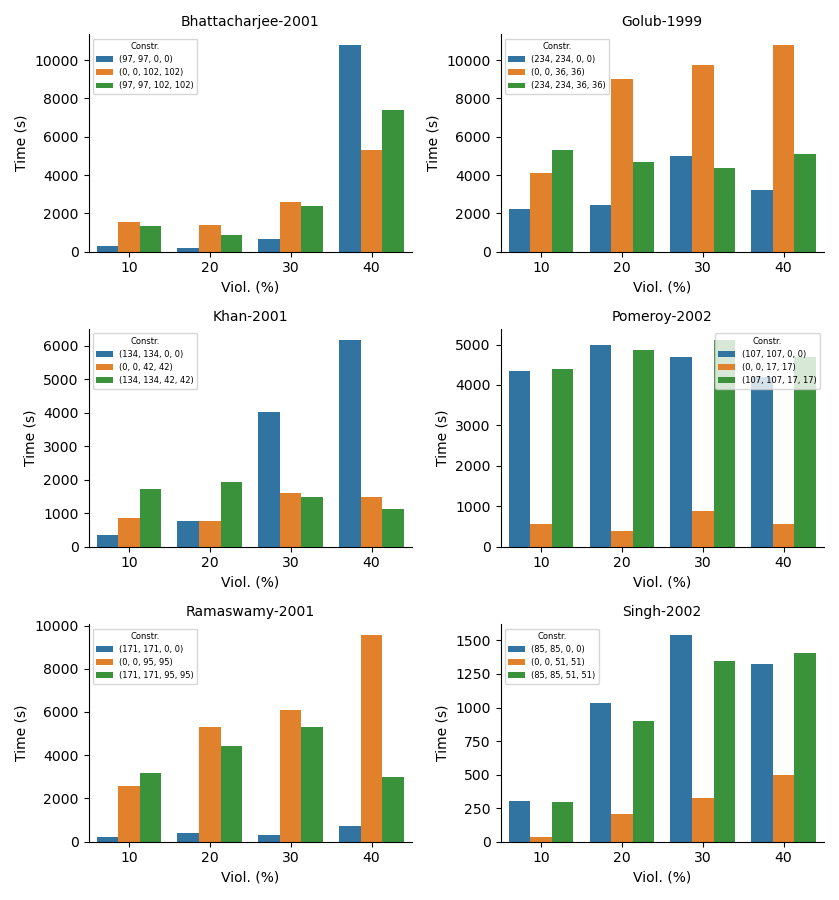}
        \caption{Total solution times of {\tt CBICL-BB} on real-world gene expression datasets under constraint configurations (Constr.) $(n/2, n/2, 0, 0)$, $(0, 0, m/2, m/2)$, $(0, 0, m/2, m/2)$, $(n/2, n/2, m/2, m/2)$ and violation percentages (Viol.). Each bar represents the average over three sets of constraints corresponding to the same configuration.}
        \label{fig:n2}
    \end{figure}
    \clearpage
    \begin{figure}[!ht]
        \centering
        \includegraphics[scale=0.52]{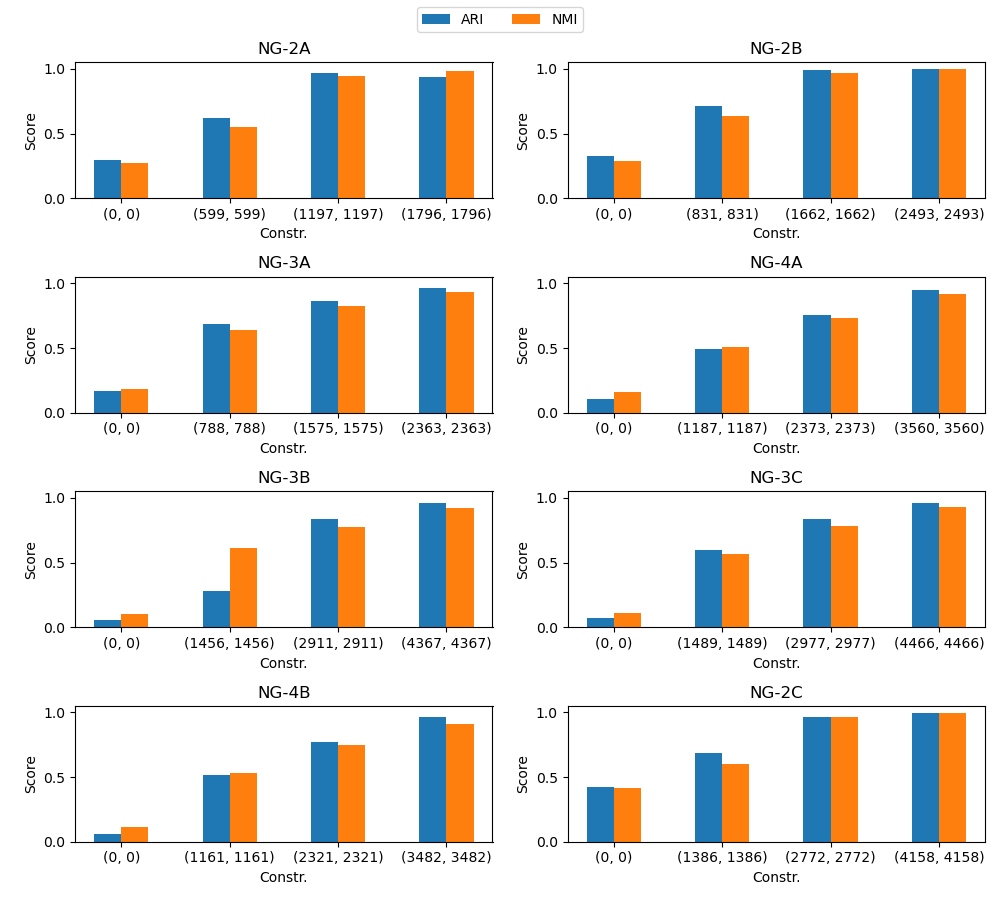}
        \caption{ARI and NMI scores of {\tt CBICL-LR} on real-world document clustering datasets under varying constraint configurations (Constr.). Each bar represents the average over three sets of constraints for each configuration.}
        \label{fig:ari}
    \end{figure}

\end{appendices}



\bibliography{abbr, sn-bibliography}

@string{siopt="SIAM J. Optim."}

@string{mathprog="Math. Program."}

@string{dmkd="Data Min. Knowl. Discov."}

@string{mpc="Math. Program. Comput."}

@string{oms="Optim. Methods Softw."}

@string{la="Linear Algebra Appl."}

@string{cor="Comput. Oper. Res."}

@string{ijoc="INFORMS J. Comput."}

@string{jogo="J. Global Optim."}

@string{coap="Comput. Optim. Appl."}

@string{ejor="Eur. J. Oper. Res."}

@string{joc="J. Classif."}

@misc{sudoso2024,
  author =        {A. M. Sudoso},
  publisher =     ijoc,
  title =         {A Semidefinite Programming-Based Branch-and-Cut Algorithm for Biclustering},
   year =          {2024}
}

@article{witten2009penalized,
  title={A penalized matrix decomposition, with applications to sparse principal components and canonical correlation analysis},
  author={Witten, Daniela M and Tibshirani, Robert and Hastie, Trevor},
  journal={Biostatistics},
  volume={10},
  number={3},
  pages={515--534},
  year={2009},
  publisher={Oxford University Press}
}

@article{li2023beyond,
  title={Beyond symmetry: Best submatrix selection for the sparse truncated svd},
  author={Li, Yongchun and Xie, Weijun},
  journal=mathprog,
  pages={1--50},
  year={2023},
  publisher={Springer}
}

@article{de2008clustering,
  title={Clustering cancer gene expression data: a comparative study},
  author={De Souto, Marcilio CP and Costa, Ivan G and de Araujo, Daniel SA and Ludermir, Teresa B and Schliep, Alexander},
  journal={BMC Bioinformatics},
  volume={9},
  number={1},
  pages={1--14},
  year={2008},
  publisher={BioMed Central}
}

@article{sun2020sdpnal+,
  title={{SDPNAL+}: {A} {M}atlab software for semidefinite programming with bound constraints},
  author={Sun, Defeng and Toh, Kim-Chuan and Yuan, Yancheng and Zhao, Xin-Yuan},
  journal=oms,
  volume={35},
  number={1},
  pages={87--115},
  year={2020},
  publisher={Taylor \& Francis}
}

@article{bergmann2003iterative,
  title={Iterative signature algorithm for the analysis of large-scale gene expression data},
  author={Bergmann, Sven and Ihmels, Jan and Barkai, Naama},
  journal={Physical review E},
  volume={67},
  number={3},
  pages={031902},
  year={2003},
  publisher={APS}
}

@article{shabalin2009finding,
  title={Finding large average submatrices in high dimensional data},
  author={Shabalin, Andrey A and Weigman, Victor J and Perou, Charles M and Nobel, Andrew B},
  journal={Ann. Appl. Stat.},
  pages={985--1012},
  year={2009},
  publisher={JSTOR}
}

@article{padilha2017systematic,
  title={A systematic comparative evaluation of biclustering techniques},
  author={Padilha, Victor A and Campello, Ricardo JGB},
  journal={{BMC} Bioinformatics},
  volume={18},
  number={1},
  pages={1--25},
  year={2017},
  publisher={BioMed Central}
}

@book{mirkin1996mathematical, 
Author = {Boris Mirkin}, 
Publisher = {Springer US}, 
Series = {Nonconvex Optimization and Its Applications}, 
Title = {Mathematical Classification and Clustering}, 
Volume = {11}, Year = {1996}, Address={New York, NY, USA}}

@article{madeira2004biclustering,
  title={Biclustering algorithms for biological data analysis: a survey},
  author={Madeira, Sara C and Oliveira, Arlindo L},
  journal={IEEE/ACM Trans. Comput. Biol. Bioinform.},
  volume={1},
  number={1},
  pages={24--45},
  year={2004},
  publisher={IEEE}
}

@article{ames2014guaranteed,
  title={Guaranteed clustering and biclustering via semidefinite programming},
  author={Ames, Brendan PW},
  journal=mathprog,
  volume={147},
  number={1},
  pages={429--465},
  year={2014},
  publisher={Springer}
}

@inproceedings{dhillon2001co,
    author = {Dhillon, Inderjit S.},
    title = {Co-Clustering Documents and Words Using Bipartite Spectral Graph Partitioning},
    year = {2001},
    isbn = {158113391X},
    publisher = {Association for Computing Machinery},
    address = {New York, NY, USA},
    booktitle = {Proceedings of the Seventh ACM SIGKDD International Conference on Knowledge Discovery and Data Mining},
    pages = {269–274},
    numpages = {6},
    location = {San Francisco, California},
    series = {KDD '01}
}

@inproceedings{cheng2000biclustering,
author = {Cheng, Yizong and Church, George M.},
title = {Biclustering of Expression Data},
year = {2000},
isbn = {1577351150},
publisher = {AAAI Press},
booktitle = {Proceedings of the Eighth International Conference on Intelligent Systems for Molecular Biology},
pages = {93–103},
address = {La Jolla, CA, USA},
numpages = {11}
}

@article{busygin2008biclustering,
  title={Biclustering in data mining},
  author={Busygin, Stanislav and Prokopyev, Oleg and Pardalos, Panos M},
  journal=cor,
  volume={35},
  number={9},
  pages={2964--2987},
  year={2008},
  publisher={Elsevier}
}

@incollection{fan2010integer,
  title={Integer programming of biclustering based on graph models},
  author={Fan, Neng and Chinchuluun, Altannar and Pardalos, Panos M},
  booktitle={Optimization and Optimal Control},
  pages={479--498},
  year={2010},
  publisher={Springer},
  address={New York, NY, USA}
}

@article{fan2012multi,
  title={Multi-way clustering and biclustering by the ratio cut and normalized cut in graphs},
  author={Fan, Neng and Pardalos, Panos M},
  journal={J. Comb. Optim.},
  volume={23},
  number={2},
  pages={224--251},
  year={2012},
  publisher={Springer}
}

@article{jansson2008rigorous,
  title={Rigorous error bounds for the optimal value in semidefinite programming},
  author={Jansson, Christian and Chaykin, Denis and Keil, Christian},
  journal={SIAM J. Numer. Anal.},
  volume={46},
  number={1},
  pages={180--200},
  year={2008},
  publisher={SIAM}
}

@article{lloyd1982least,
  title={Least squares quantization in {PCM}},
  author={Lloyd, Stuart},
  journal={IEEE Trans. Inf. Theory.},
  volume={28},
  number={2},
  pages={129--137},
  year={1982},
  publisher={IEEE}
}

@article{yang2015sdpnal,
  title={{SDPNAL+}: a majorized semismooth {N}ewton-{CG} augmented Lagrangian method for semidefinite programming with nonnegative constraints},
  author={Yang, Liuqin and Sun, Defeng and Toh, Kim-Chuan},
  journal=mpc,
  volume={7},
  number={3},
  pages={331--366},
  year={2015},
  publisher={Springer}
}

@inproceedings{ding2006biclustering,
  title={Biclustering protein complex interactions with a biclique finding algorithm},
  author={Ding, Chris and Zhang, Ya and Li, Tao and Holbrook, Stephen R},
  booktitle={Sixth International Conference on Data Mining (ICDM'06)},
  pages={178--187},
  year={2006},
  organization={IEEE}
}

@article{piccialli2022exact,
  title={An exact algorithm for semi-supervised minimum sum-of-squares clustering},
  author={Piccialli, Veronica and Russo, Anna Russo and Sudoso, Antonio M},
  journal=cor,
  volume={147},
  pages={105958},
  year={2022},
  publisher={Elsevier}
}

@article{piccialli2023global,
  title={Global optimization for cardinality-constrained minimum sum-of-squares clustering via semidefinite programming},
  author={Piccialli, Veronica and Sudoso, Antonio M},
  journal=mathprog,
  pages={1--35},
  year={2023},
  publisher={Springer}
}

@article{sahinidis1996baron,
  title={{BARON}: A general purpose global optimization software package},
  author={Sahinidis, Nikolaos V},
  journal=jogo,
  volume={8},
  pages={201--205},
  year={1996},
  publisher={Springer}
}

@article{burer2003nonlinear,
  title={A nonlinear programming algorithm for solving semidefinite programs via low-rank factorization},
  author={Burer, Samuel and Monteiro, Renato DC},
  journal=mathprog,
  volume={95},
  number={2},
  pages={329--357},
  year={2003},
  publisher={Springer}
}

@book{boyd2004convex,
  title={Convex optimization},
  author={Boyd, Stephen and Vandenberghe, Lieven},
  year={2004},
  publisher={Cambridge University Press},
  address={New York, NY, USA}
}

@book{basu2008constrained,
  title={Constrained clustering: Advances in Algorithms, Theory, and Applications},
  author={Basu, Sugato and Davidson, Ian and Wagstaff, Kiri},
  year={2008},
  publisher={Chapman and Hall/CRC},
  address = {New York, NY, USA}
}

@inproceedings{wagstaff2000clustering,
author = {Wagstaff, Kiri and Cardie, Claire},
title = {Clustering with Instance-level Constraints},
year = {2000},
isbn = {1558607072},
publisher = {Morgan Kaufmann Publishers Inc.},
address = {San Francisco, CA, USA},
booktitle = {Proceedings of the Seventeenth International Conference on Machine Learning},
pages = {1103–1110},
numpages = {8},
series = {ICML '00}
}

@article{andreani2008augmented,
  title={On augmented Lagrangian methods with general lower-level constraints},
  author={Andreani, Roberto and Birgin, Ernesto G and Mart{\'\i}nez, Jos{\'e} Mario and Schuverdt, Mar{\'\i}a Laura},
  journal={SIAM Journal on Optimization},
  volume={18},
  number={4},
  pages={1286--1309},
  year={2008},
  publisher={SIAM}
}

@article{birgin2020complexity,
  title={Complexity and performance of an augmented Lagrangian algorithm},
  author={Birgin, Ernesto G and Mart{\'\i}nez, Jos{\'e} Mario},
  journal=oms,
  volume={35},
  number={5},
  pages={885--920},
  year={2020},
  publisher={Taylor \& Francis}
}

@book{birgin2014practical,
  title={Practical augmented Lagrangian methods for constrained optimization},
  author={Birgin, Ernesto G and Mart{\'\i}nez, Jos{\'e} Mario},
  year={2014},
  publisher={SIAM},
  address={Philadelphia, PA, USA}
}

@article{andreani2011sequential,
  title={On sequential optimality conditions for smooth constrained optimization},
  author={Andreani, Roberto and Haeser, Gabriel and Mart{\'\i}nez, Jos{\'e} M{\`a}rio},
  journal={Optimization},
  volume={60},
  number={5},
  pages={627--641},
  year={2011},
  publisher={Taylor \& Francis}
}

@article{cassioli2013convergence,
  title={On the convergence of inexact block coordinate descent methods for constrained optimization},
  author={Cassioli, Andrea and Di Lorenzo, David and Sciandrone, Marco},
  journal=ejor,
  volume={231},
  number={2},
  pages={274--281},
  year={2013},
  publisher={Elsevier}
}

@article{galli2020unified,
  title={A unified convergence framework for nonmonotone inexact decomposition methods},
  author={Galli, Leonardo and Galligari, Alessandro and Sciandrone, Marco},
  journal=coap,
  volume={75},
  pages={113--144},
  year={2020},
  publisher={Springer}
}

@article{cifuentes2021burer,
  title={On the Burer--Monteiro method for general semidefinite programs},
  author={Cifuentes, Diego},
  journal={Optim. Lett.},
  volume={15},
  number={6},
  pages={2299--2309},
  year={2021},
  publisher={Springer}
}

@article{davidson2007survey,
  title={A survey of clustering with instance-level constraints},
  author={Davidson, Ian and Basu, Sugato},
  journal={ACM Trans. Knowl. Discov. Data},
  volume={1},
  number={1-41},
  pages={2--42},
  year={2007},
  publisher={ACM}
}

@article{randel2021lagrangian,
  title={A Lagrangian-based score for assessing the quality of pairwise constraints in semi-supervised clustering},
  author={Randel, Rodrigo and Aloise, Daniel and Blanchard, Simon J and Hertz, Alain},
  journal=dmkd,
  volume={35},
  number={6},
  pages={2341--2368},
  year={2021},
  publisher={Springer}
}

@inproceedings{lang1995newsweeder,
author = {Lang, Ken},
title = {NewsWeeder: learning to filter netnews},
year = {1995},
isbn = {1558603778},
publisher = {Morgan Kaufmann Publishers Inc.},
address = {San Francisco, CA, USA},
booktitle = {Proceedings of the Twelfth International Conference on International Conference on Machine Learning},
pages = {331–339},
numpages = {9},
location = {Tahoe City, California, USA},
series = {ICML'95}
}

@inproceedings{long2006unsupervised,
  title={Unsupervised learning on k-partite graphs},
  author={Long, Bo and Wu, Xiaoyun and Zhang, Zhongfei and Yu, Philip S},
  booktitle={Proceedings of the 12th ACM SIGKDD International Conference on Knowledge Discovery and Data Mining},
  pages={317--326},
  year={2006}
}

@article{hubert1985comparing,
  title={Comparing partitions},
  author={Hubert, Lawrence and Arabie, Phipps},
  journal=joc,
  volume={2},
  number={1},
  pages={193--218},
  year={1985},
  publisher={Springer}
}

@inproceedings{vinh2009information,
  title={Information theoretic measures for clusterings comparison: is a correction for chance necessary?},
  author={Vinh, Nguyen Xuan and Epps, Julien and Bailey, James},
  booktitle={Proceedings of the 26th Annual International Conference on Machine Learning},
  pages={1073--1080},
  year={2009}
}

@article{locatelli2024fix,
  title={Fix and bound: an efficient approach for solving large-scale quadratic programming problems with box constraints},
  author={Locatelli, Marco and Piccialli, Veronica and Sudoso, Antonio M},
  journal=mpc,
  pages={1--33},
  year={2024},
  publisher={Springer}
}

@article{burer2005local,
  title={Local minima and convergence in low-rank semidefinite programming},
  author={Burer, Samuel and Monteiro, Renato DC},
  journal=mathprog,
  volume={103},
  number={3},
  pages={427--444},
  year={2005},
  publisher={Springer}
}

@article{barzilai1988two,
  title={Two-point step size gradient methods},
  author={Barzilai, Jonathan and Borwein, Jonathan M},
  journal={IMA J. Numer. Anal.},
  volume={8},
  number={1},
  pages={141--148},
  year={1988},
  publisher={Oxford University Press}
}

@article{zhou2006gradient,
  title={Gradient methods with adaptive step-sizes},
  author={Zhou, Bin and Gao, Li and Dai, Yu-Hong},
  journal=coap,
  volume={35},
  number={1},
  pages={69--86},
  year={2006},
  publisher={Springer}
}

@article{frassoldati2008new,
  title={New adaptive stepsize selections in gradient methods},
  author={Frassoldati, Giacomo and Zanni, Luca and Zanghirati, Gaetano},
  journal={J. Ind. Manag. Optim.},
  volume={4},
  number={2},
  pages={299--312},
  year={2008}
}

@article{bertsekas1997nonlinear,
author = {D P Bertsekas},
title = {Nonlinear Programming},
journal = {J. Oper. Res. Soc.},
volume = {48},
number = {3},
pages = {334--334},
year = {1997},
publisher = {Taylor \& Francis}}

@article{boumal2020deterministic,
  title={Deterministic Guarantees for Burer-Monteiro Factorizations of Smooth Semidefinite Programs},
  author={Boumal, Nicolas and Voroninski, Vladislav and Bandeira, Afonso S},
  journal={Commun. Pure Appl. Math.},
  volume={73},
  number={3},
  pages={581--608},
  year={2020},
  publisher={Wiley Online Library}
}

@article{pensa2010co,
  title={Co-clustering numerical data under user-defined constraints},
  author={Pensa, Ruggero G and Boulicaut, Jean-Francois and Cordero, Francesca and Atzori, Maurizio},
  journal={Statistical Analysis and Data Mining: The ASA Data Science Journal},
  volume={3},
  number={1},
  pages={38--55},
  year={2010},
  publisher={Wiley Online Library}
}

@inproceedings{pensa2008constrained,
  title={Constrained co-clustering of gene expression data},
  author={Pensa, Ruggero G and Boulicaut, Jean-Fran{\c{c}}ois},
  booktitle={Proceedings of the 2008 SIAM International Conference on Data Mining},
  pages={25--36},
  year={2008},
  organization={SIAM}
}

@article{chen2009non,
  title={Non-negative matrix factorization for semisupervised heterogeneous data coclustering},
  author={Chen, Yanhua and Wang, Lijun and Dong, Ming},
  journal={IEEE Transactions on Knowledge and Data Engineering},
  volume={22},
  number={10},
  pages={1459--1474},
  year={2009},
  publisher={IEEE}
}

@inproceedings{song2010constrained,
  title={Constrained coclustering for textual documents},
  author={Song, Yangqiu and Pan, Shimei and Liu, Shixia and Wei, Furu and Zhou, Michelle and Qian, Weihong},
  booktitle={Proceedings of the AAAI Conference on Artificial Intelligence},
  volume={24},
  number={1},
  pages={581--586},
  year={2010}
}

@article{chen2012globally,
  title={Globally solving nonconvex quadratic programming problems via completely positive programming},
  author={Chen, Jieqiu and Burer, Samuel},
  journal=mpc,
  volume={4},
  number={1},
  pages={33--52},
  year={2012},
  publisher={Springer}
}

@article{de2022ratio,
  title={The ratio-cut polytope and K-means clustering},
  author={De Rosa, Antonio and Khajavirad, Aida},
  journal=siopt,
  volume={32},
  number={1},
  pages={173--203},
  year={2022},
  publisher={SIAM}
}

@inproceedings{lawless2024fair,
  title={Fair minimum representation clustering},
  author={Lawless, Connor and G{\"u}nl{\"u}k, Oktay},
  booktitle={International Conference on the Integration of Constraint Programming, Artificial Intelligence, and Operations Research},
  pages={20--37},
  year={2024},
  organization={Springer}
}

@article{de2024power,
  title={On the power of linear programming for K-means clustering},
  author={De Rosa, Antonio and Khajavirad, Aida and Wang, Yakun},
  journal={arXiv preprint arXiv:2402.01061},
  year={2024}
}

@article{rujeerapaiboon2019size,
  title={Size matters: Cardinality-constrained clustering and outlier detection via conic optimization},
  author={Rujeerapaiboon, Napat and Schindler, Kilian and Kuhn, Daniel and Wiesemann, Wolfram},
  journal=siopt,
  volume={29},
  number={2},
  pages={1211--1239},
  year={2019},
  publisher={SIAM}
}

\end{document}